\documentclass{amsart}

\usepackage{amsmath,amsthm,amssymb,latexsym,mathrsfs,enumerate,enumitem,multicol}
\usepackage[misc,geometry]{ifsym}
\usepackage[numbers]{natbib}

\usepackage[colorlinks=true,linkcolor=blue,citecolor=blue,urlcolor=blue]{hyperref}
\usepackage[utf8]{inputenc}
\usepackage[T1]{fontenc}

\numberwithin{equation}{section}
\theoremstyle{thmit} 
\newtheorem{theorem}{Theorem}[section]
\newtheorem{lemma}[theorem]{Lemma}
\newtheorem{corollary}[theorem]{Corollary}
\newtheorem{proposition}[theorem]{Proposition}

\newtheorem{example}[theorem]{Example}
\newtheorem{remark}[theorem]{Remark}

\newcommand\ces{\mathsf{C}}
\newcommand\ices{\mathsf{C}^{-1}}
\newcommand\cn{\mathbb{C}}
\newcommand\rn{\mathbb{R}}
\newcommand\cnn{\mathbb{C}^\mathbb{N}}
\newcommand\nn{\mathbb{N}}

\newcommand\kotinf{\lambda_1(A)}
\newcommand\ginf{G_\infty}
\newcommand\pssi{\Lambda^1_\infty(\alpha)}

\newcommand\ptsp{\sigma_{pt}(\mathsf{C}; \kozero)}
\newcommand\stsp{\sigma^*(\mathsf{C}; \kozero)}
\newcommand\spec{\sigma(\mathsf{C}; \kozero)}
\newcommand\clo{\mathcal{L}}
\newcommand\ind{\operatorname{ind}}
\newcommand\proj{\operatorname{proj}}
\newcommand\koinf{k_\infty(V)}
\newcommand\kozero{k_0(V)}
\newcommand\supi{\sup_{i \in \nn}}
\newcommand\sumi{\sum_{i=1}^\infty}
\newcommand\sumj{\sum_{j=1}^\infty}
\newcommand\limii{\lim_{i \to \infty}}
\newcommand\sumji{\sum_{j=1}^i}
\newcommand\sumjim{\sum_{j=1}^{i-1}}

\newcommand\dragilev{L_f(\alpha_i,\infty)}
\newcommand\igoes{\xrightarrow{i}}
\newcommand\lamball{B(\lambda,\delta)}
\newcommand\clamball{\overline{\lamball}}

\begin{document}

\author{Ersin Kızgut}

\address{Instituto Universitario de Matemática Pura y Aplicada \\ Universitat Politècnica de València \\ E-46071 Valencia, Spain}

\email{erkiz@upv.es}

\subjclass[2010]{47A10, 47B37, 46A45, 46A04}

\keywords{Cesàro operator, duals of smooth sequence spaces, generalized power series spaces, spectrum, (LB)-space.}

\title[The Cesàro operator]{The Cesàro operator on duals of smooth sequence spaces of infinite type}

\begin{abstract}
	The discrete Cesàro operator $\mathsf{C}$ is investigated in strong duals of smooth sequence spaces of infinite type. Of main interest is its spectrum, which turns out to be distinctly different in the cases when the space is nuclear and when it is not.
\end{abstract}

\maketitle

\section{Introduction}
	The discrete Cesàro operator $\ces$ acting on $\cnn$ is defined by
\[
	\ces x:=\left(x_1, \frac{x_1+x_2}{2}, \frac{x_1+x_2+x_3}{3}, \dots, \frac{x_1+\dots +x_i}{i}, \dots \right), \, x=(x_i)_{i \in \nn},
\]
which is a bicontinuous isomorphism of $\cnn$ onto itself with

\begin{equation}\label{ices}
	\ices y = (i y_i-(i-1)y_{i-1}), \quad y=(y_i) \in \cnn.
\end{equation}
For a diverse family of classical Banach spaces, the fundamental questions of continuity and determination of the spectrum have been investigated, and precise answers have been obtained. We refer the reader to the introduction of \cite{ABRwc0}. The behaviour of $\ces$ when acting on the Fréchet spaces $\cnn$, $\ell_{p+}=\bigcap_{q>p}\ell_q,\, 1 \leq p<\infty$, and on the strong duals $(\Lambda_\infty(\alpha))'_b$ of power series space of infinite type were studied in \cite{ABR13, ABR17, ABR18-2}. In this paper we generalize the results of \cite{ABR18-2} to the setting of the duals of certain types of Köthe echelon spaces, so called smooth sequence spaces of infinite type. These spaces were introduced by Terzioğlu \cite{Ter69, Ter73, Ter75, Ram79}. We also refer to Kocatepe \cite{Koc85, Koc88, Koc89}. The aim of this paper is to investigate the behaviour of $\ces$ when it acts on the strong duals $(\kotinf)'_b$ of smooth sequence spaces of infinite type. The reason for focusing on the infinite type dual spaces is that the Cesàro operator $\ces$ fails to be continuous on most of the finite type dual spaces (see Proposition~\ref{ces does not act in g1}). Some of our proofs are inspired by \cite{ABR18-2}, but new arguments are needed in this setting. We  distinctly expain our context. Let $A=(a_n)_n$, where $a_n=(a_n(i))_i$. $A$ is called a \textit{Köthe matrix} if the following conditions are satisfied:

\begin{enumerate}[label=(K\arabic*)]
	\item $0 \leq a_n(i) \leq a_{n+1}(i)$, for all $i, n \in \nn$.
	\item For all $i \in \nn$, there exists $n \in \nn$ such that $a_n(i)>0$.
\end{enumerate} 
The Köthe echelon space $\kotinf$ of order 1 is defined by
\begin{equation}
	\kotinf:= \{x \in \cnn: \sumi a_n(i)|x_i|<\infty, \, \forall n \in \nn \},
\end{equation}
which is a Fréchet space when equipped with the increasing system of seminorms
\[
	p_n(x) = \sumi a_n(i)|x_i|, \quad x \in \kotinf, \, n \in \nn.
\]
Then $\kotinf=\bigcap_{n \in \nn} \ell_1(a_n)$, where $\ell_1(a_n)$ is the usual Banach space. The space $\kotinf$ is given by the projective limit topology, that is, $\kotinf=\proj_n \ell_1(a_n)$. For the theory of Köthe echelon spaces $\lambda_p(A)$ of order $p$ for $1 \leq p \leq \infty$ or $p=0$, see \cite[Section 27]{Vog97}. Let $V=(v_n)_n=(\frac{1}{a_n})_n$. Then, the corresponding co-echelon space of $\kotinf$ is given by the (LB)-space $\koinf:=\ind_n \ell_\infty(v_n)$. For co-echelon spaces, the reader is referred to \cite{Bie88, BMS82, Kot69, Vog97}. A Köthe echelon space $\kotinf$ is said to be a \textit{smooth sequence space of infinite type} (or a $\ginf$-space) \cite[Section 3]{Ter69} if $A$ satisfies

\begin{enumerate}[label=(G$\infty$-\arabic*)]
	\item $1 \leq a_n(i) \leq a_n(i+1)$, for all $i,n \in \nn$.
	\item For all $n \in \nn$ there exist $m>n$ and $M>0$ such that $a_n(i)^2 \leq M a_m(i)$, for all $i \in \nn$.
\end{enumerate}

\begin{proposition}\textup{\cite[3.1]{Ter69}\cite[Theorem 4.9]{BMS82}}\label{kotinf always schwartz}
	For a $\ginf$-Köthe matrix $A$, the following statements are equivalent: 
	\begin{enumerate}[label=\normalfont(\arabic*)]
		\item $\kotinf$ is a Schwartz space.
		\item $\kotinf$ is not isomorphic to $\ell_1$.
		\item There exists $n \in \nn$ such that
		\[
		\limii a_n(i)=\infty. \tag{I}\label{tends to infinity}
		\]
		\item For all $n \in \nn$ there exists $m>n$ such that
		\[
			\limii \frac{v_m(i)}{v_n(i)} = \limii \frac{a_n(i)}{a_m(i)}=0. \tag{S}\label{condition S}
		\]
		\item $\koinf$ is isomorphic to $\kozero=\ind_n c_0(v_n)$.
		\item $\kozero$ is a Montel space.
	\end{enumerate}
	
\end{proposition}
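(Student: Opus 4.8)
The plan is to separate the six conditions into an ``echelon-side'' group (1)--(4), handled by elementary estimates built on the $\ginf$ axioms, and a ``co-echelon-side'' group (5)--(6), linked to the first group through the weights $V=(1/a_n)_n$. The crucial preliminary observation is that by (G$\infty$-1) each sequence $(a_n(i))_i$ is non-decreasing and bounded below by $1$, hence converges in $[1,\infty]$; thus for every fixed $n$, \emph{either} $a_n$ is bounded \emph{or} $\limii a_n(i)=\infty$. Together with the monotonicity $a_n\leq a_{n+1}$ in $n$, this dichotomy is what drives the whole argument.

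First I would prove (3) $\Leftrightarrow$ (4). For ``$\Rightarrow$'', fix $n$. If $a_n$ is bounded by some $C_n$, then by \eqref{tends to infinity} some $a_{n_0}$ tends to infinity, and since $a_m\geq a_{n_0}$ for $m\geq n_0$, any $m>\max(n,n_0)$ gives $a_n(i)/a_m(i)\leq C_n/a_m(i)\to 0$. If instead $a_n\to\infty$, apply (G$\infty$-2) to get $m>n$ and $M>0$ with $a_n(i)^2\leq M a_m(i)$, whence $a_n(i)/a_m(i)\leq M/a_n(i)\to 0$; recalling $v_m/v_n=a_n/a_m$, this is exactly \eqref{condition S}. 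For ``$\Leftarrow$'', apply \eqref{condition S} with $n=1$: since $a_1\geq 1$, the relation $a_1/a_m\to 0$ forces $a_m\to\infty$, giving \eqref{tends to infinity}. Next, (1) $\Leftrightarrow$ (4) is the classical Schwartz characterisation: writing $\kotinf=\proj_n \ell_1(a_n)$, the linking map $\ell_1(a_m)\to\ell_1(a_n)$ is diagonal with entries $a_n(i)/a_m(i)$ in the normalised bases, hence precompact precisely when $a_n(i)/a_m(i)\to 0$, so that $\kotinf$ is Schwartz iff \eqref{condition S} holds (cf.\ \cite[Section 27]{Vog97}, \cite{BMS82}).

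For (2) $\Leftrightarrow$ (3): if \eqref{tends to infinity} fails, every $a_n$ is bounded by some $C_n$, and since $a_1\geq1$ one gets $\|x\|_1\leq p_1(x)\leq p_n(x)\leq C_n\|x\|_1$, so all seminorms are equivalent to a weighted $\ell_1$-norm and $\kotinf\cong\ell_1$; conversely, if \eqref{tends to infinity} holds then $\kotinf$ is Schwartz and infinite-dimensional, hence non-normable and not isomorphic to the Banach space $\ell_1$.

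Finally I would transfer to the co-echelon side. Since $v_n=1/a_n$ is decreasing in $n$, there are continuous inclusions $c_0(v_n)\hookrightarrow c_0(v_m)$ for $m>n$, and compactness of these linking maps again amounts to $v_m/v_n=a_n/a_m\to 0$, i.e.\ \eqref{condition S}; under this condition $\kozero=\ind_n c_0(v_n)$ is a (DFS)-space and therefore Montel, which is (6). Moreover, whenever $v_m(i)/v_n(i)\to 0$ every $x\in\ell_\infty(v_n)$ satisfies $v_m(i)|x_i|\leq (v_m(i)/v_n(i))\,\|x\|_{v_n}\to 0$, so $\ell_\infty(v_n)\subseteq c_0(v_m)$; interlacing these inclusions with $c_0(v_m)\subseteq\ell_\infty(v_m)$ identifies the two inductive limits, giving $\koinf=\kozero$ topologically, which is (5). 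The converse implications (6) $\Rightarrow$ (4) and (5) $\Rightarrow$ (4) are where the genuine difficulty lies, and here I would invoke \cite[Theorem 4.9]{BMS82}: one must use the structure theory of (LB)-spaces -- regularly decreasing Köthe sets and a Grothendieck-type factorisation -- to deduce that Montelness of $\kozero$ (respectively the coincidence $\koinf=\kozero$) forces the linking maps to be compact, thereby recovering \eqref{condition S}. This co-echelon step, rather than the elementary estimates of the preceding paragraphs, is the main obstacle.
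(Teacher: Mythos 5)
Your proposal is essentially correct, but note first that the paper contains no proof of this proposition at all: it is quoted verbatim from \cite{Ter69} and \cite[Theorem 4.9]{BMS82}, so the comparison is with the literature rather than with an argument in the text. What you give is a mostly self-contained proof, and the elementary part is sound: the dichotomy that each $(a_n(i))_i$ is non-decreasing and $\geq 1$, hence either bounded or divergent to $\infty$; the two-case argument for (3) $\Rightarrow$ (4) via (G$\infty$-2); the collapse of all seminorms onto $\|\cdot\|_1$ when (3) fails; and the compact-linking-map characterisations giving (1) $\Leftrightarrow$ (4) and (4) $\Rightarrow$ (5), (6). The one place you sell your own argument short is the closing step: you declare (5) $\Rightarrow$ (4) and (6) $\Rightarrow$ (4) to be ``the main obstacle'' and defer to the structure theory behind \cite[Theorem 4.9]{BMS82}, yet in the $\ginf$ setting your opening dichotomy disposes of both by contraposition. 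If (3) fails, every $a_n$ is bounded, so $1/C_n \leq v_n(i) \leq 1$ for all $i$, and each $c_0(v_n)$ (resp.\ $\ell_\infty(v_n)$) is $c_0$ (resp.\ $\ell_\infty$) up to an equivalent norm; hence $\kozero \cong c_0$, an infinite-dimensional Banach space which is not Montel, while $\koinf \cong \ell_\infty$ is not isomorphic to $c_0 \cong \kozero$. Thus (6) $\Rightarrow$ (3) and (5) $\Rightarrow$ (3) are immediate, with no regularly decreasing sets or Grothendieck factorisation needed, and your proof becomes fully elementary. The genuinely hard content of \cite{BMS82} concerns arbitrary Köthe matrices, where Montelness of $\kozero$ does not in general force the linking maps to be compact; the point of the $\ginf$ axioms is precisely that this gap closes.
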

In the light of Proposition~\ref{kotinf always schwartz}, we deal with the Cesàro operator $\ces$ defined on the co-echelon space $\kozero$ of order 0. Indeed, since the Köthe echelon space $\kotinf$ of order 1 is a Fréchet-Schwartz space (hence, distinguished) in our case, it follows that $\kozero=\ind_n c_0(v_n)=\ind_n \ell_\infty(v_n)=(\kotinf)'_b$ is the strong dual of $\kotinf$. For each $n \in \nn$ we define the norm
\begin{equation}
	q_n(x):=\supi v_n(i)|x_i|, \quad x=(x_i) \in \ell_\infty(v_n)
\end{equation}
whose restriction to $c_0(v_n)$ is the norm in $c_0(v_n)$. For each $n \in \nn$, $c_0(v_n) \subseteq c_0(v_m)$, for every $m \geq n$ and 
\begin{equation}\label{continuity by norm}
	q_m(x) \leq q_n(x), \quad \forall x \in c_0(v_n).
\end{equation}
Let us remind that nuclear spaces are in particular Schwartz. Since $\kozero=(\kotinf)'_b$, the nuclearity of $\kozero$ is equivalent to that of $\kotinf$. The following result is known (see \cite[3.1-b]{Ter69}), however, we give a partial proof.

\begin{proposition}\label{main equivalences 1}
	For a $\ginf$-Köthe matrix $A$, the following statements are equivalent:
	\begin{enumerate}[label=\normalfont(\arabic*)]
		\item For any $n \in \nn$ there exists $m>n$ such that
		\[
		\sumi\frac{v_m(i)}{v_n(i)}=\sumi \frac{a_n(i)}{a_m(i)} < \infty. \tag{GPC}\label{gpc}
		\]
		\item $\kotinf$ is nuclear.
		\item $\kozero$ is nuclear.
		\item There exists $n \in \nn$ such that
		 \[
		 \sumi v_n(i) < \infty. \tag{SV} \label{vn summable}
		 \]
	\end{enumerate}
\end{proposition}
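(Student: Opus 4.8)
The plan is to separate the four conditions into a part handled by standard structure theory and a part that genuinely uses the $\ginf$ hypotheses. First, the equivalence of (2) and (3) requires no work beyond what is already recorded before the statement: $\kozero=(\kotinf)'_b$, and a Fréchet space is nuclear exactly when its strong dual is, so $\kotinf$ is nuclear if and only if $\kozero$ is. Second, the equivalence of (1) and (2) is the Grothendieck--Pietsch nuclearity criterion for echelon spaces of order $1$ (see \cite{Vog97}): $\kotinf$ is nuclear precisely when for every $n$ there is $m>n$ with $\sumi a_n(i)/a_m(i)<\infty$, and this is literally \eqref{gpc} once one recalls $a_n(i)/a_m(i)=v_m(i)/v_n(i)$. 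Hence the only new content is to link (4) to this chain, i.e. to show \eqref{gpc}$\Leftrightarrow$\eqref{vn summable}; this is the ``partial proof'' the text promises, and it is where (G$\infty$-1) and (G$\infty$-2) enter.

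For \eqref{gpc}$\Rightarrow$\eqref{vn summable} I would specialize \eqref{gpc} to $n=1$, producing some $m>1$ with $\sumi a_1(i)/a_m(i)<\infty$. By (G$\infty$-1) one has $a_1(i)\geq 1$ for every $i$, so $v_m(i)=1/a_m(i)\leq a_1(i)/a_m(i)$ and therefore $\sumi v_m(i)<\infty$, which is \eqref{vn summable}. This direction costs nothing beyond the normalization $a_n(i)\geq 1$.

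The direction \eqref{vn summable}$\Rightarrow$\eqref{gpc} carries the real (though modest) weight. Fix $n_0$ with $\sumi v_{n_0}(i)<\infty$ and let an arbitrary $n$ be given. Since $(a_k(i))_k$ increases in $k$ by (K1), the weights $(v_k(i))_k$ decrease in $k$, so with $n'=\max(n,n_0)$ we still have $\sumi v_{n'}(i)\leq \sumi v_{n_0}(i)<\infty$. Applying (G$\infty$-2) to $n'$ gives $m>n'$ and $M>0$ with $a_{n'}(i)^2\leq M a_m(i)$, equivalently $a_{n'}(i)/a_m(i)\leq M v_{n'}(i)$. Using $a_n(i)\leq a_{n'}(i)$ from (K1) then yields
\[
	\frac{a_n(i)}{a_m(i)}\leq \frac{a_{n'}(i)}{a_m(i)}\leq M v_{n'}(i),
\]
so that $\sumi a_n(i)/a_m(i)\leq M\sumi v_{n'}(i)<\infty$ with $m>n'\geq n$. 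This is \eqref{gpc}, completing the equivalence.

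I expect the only delicate point to be the choice $n'=\max(n,n_0)$ in the last implication: one cannot apply the quadratic estimate (G$\infty$-2) directly at the given $n$, since $v_n$ need not be summable, so one must first use monotonicity in the echelon index to pass to a level where the single hypothesis \eqref{vn summable} is available, and only then invoke (G$\infty$-2) to convert that summable weight into the ratio bound demanded by (GPC). Everything else is bookkeeping.
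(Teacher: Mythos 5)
Your proposal is correct and follows essentially the same route as the paper: (1)$\Leftrightarrow$(2) via the Grothendieck--Pietsch criterion, (2)$\Leftrightarrow$(3) by duality of nuclearity for Fréchet spaces, and the link to (4) via the estimate $a_{n'}(i)/a_m(i)\leq M v_{n'}(i)$ coming from (G$\infty$-2). The only cosmetic differences are that for \eqref{gpc}$\Rightarrow$\eqref{vn summable} you use $a_1(i)\geq 1$ from (G$\infty$-1) where the paper instead invokes the Schwartz condition \eqref{tends to infinity} to get a bounded $v_n$, and your explicit $n'=\max(n,n_0)$ step tidies a point the paper leaves implicit.
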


\begin{proof}
	(1) $\Leftrightarrow$ (2) See e.g. \cite[Proposition 28.15]{Vog97}. 

	(2) $\Leftrightarrow$ (3) Follows by \cite[pp.78]{Pie72}.
		
	(3) $\Rightarrow$ (4) Since $\kotinf$ is a Schwartz space, by Proposition~\ref{kotinf always schwartz}, we may pick $n \in \nn$ as in \eqref{tends to infinity}. So there exists $M>0$ with $v_n(i) \leq M$, for all $i \in \nn$. Since $\kozero$ is nuclear, we may choose an $m>n$ as in \eqref{gpc}. Hence
	\[
	\sumi v_m(i) = \sumi v_n(i)\frac{v_m(i)}{v_n(i)} \leq M \sumi \frac{v_m(i)}{v_n(i)} < \infty.
	\]

	(4) $\Rightarrow$ (3) Suppose that there exists $n_0$ as in \eqref{vn summable}. For an $n \geq n_0$, if we pick $m>n$ and $C>0$ as in (G$\infty$-2), then
	\[
	\sumi \frac{v_m(i)}{v_n(i)} \leq C\sumi v_n(i) < \infty,
	\]
	since (G$\infty$-1) implies that $v_n(i)<v_{n_0}(i)$, for all $i \in \nn$. That means \eqref{gpc} is satisfied, and so $\kozero$ is nuclear. 
	\end{proof}
 A power series space $\pssi=\{x \in \cnn | \sumi \exp(\alpha_in)|x_i|<\infty, \, \forall n \in \nn\}$ of infinite type associated with the strictly  increasing sequence $\alpha_i \igoes \infty$ is a $\ginf$-space (see \cite[Section 29]{Vog97} for power series spaces of infinite type). But the converse is false, in general as shown in Example~\ref{ginf_non_pss} below. A Fréchet space $E$ equipped with the increasing system $(p_n(\cdot))_{n \in \nn}$ of seminorms is said to have property (DN) \cite[pp. 359]{Vog97} if there exists $s \in \nn$ such that for all $n \in \nn$ there exist $m \in \nn$ and $C>0$ satisfying
\[
	p_n(x)^2 \leq C p_m(x) p_s(x), \quad \forall x \in E. \tag{DN}\label{dn-condition}
\]
Here, such $p_s(\cdot)$ is a norm and is called the \textit{dominating norm}. It is straightforward to prove that a power series space $\pssi$ of infinite type satisfies property \eqref{dn-condition}. Example~\ref{ginf_non_pss} also illustrates that a $\ginf$-space satisfying condition \eqref{dn-condition} is still not necessarily isomorphic to a power series space of infinite type.

A \textit{Dragilev space of infinite type} $\dragilev$ is defined via $f\colon \rn \to \rn^+$ an odd, increasing, logarithmically convex (i.e., $\log \circ f$ is convex function for $x>0$), and the strictly increasing sequence $\alpha_i \igoes \infty$. If $a_n(i)=\exp(f(n\alpha_i))$, then $\dragilev$ is isomorphic to the Köthe echelon space $\kotinf$ of order 1. Let $0<\rho<\infty$, then the limit $\tau(\rho)=\lim_{x \to \infty} \frac{f(\rho x)}{f(x)} \leq \infty$ exists. The function $f$ is called rapidly increasing if $\tau(\rho)=\infty$, for all $\rho>1$. Otherwise, $f$ is called slowly increasing. In \cite[Section 3.2]{Dra83}, it is explained that the space $\dragilev$ is isomorphic to a power series space of infinite type if and only if $f$ is slowly increasing.
\begin{example}\label{ginf_non_pss}
	\normalfont Let $A$ be an infinite matrix defined by $a_n(i):=\exp(ine^{in})$. Then, the space $X:=\{x \in \cnn: \sumi a_n(i)|x_i|<\infty, \, \forall n \in \nn\}$, is a nuclear $\ginf$-space satisfying property \eqref{dn-condition} which is not isomorphic to a power series space of infinite type.
\end{example}
\begin{proof}
\begin{enumerate}[wide, label=(\roman*), labelwidth=!, labelindent=0pt]
\setlength\itemsep{0.5em}
	\item \textit{$X$ is a nuclear $\ginf$-space}: It is trivial to check that for any $n \in \nn$ one has $0<a_n(i)\leq a_m(i)$, for all $i \in \nn$ and for all $m\geq n$, so $X$ is a Köthe echelon space. It is also clear that for all $i \in \nn$ we have $1 \leq a_n(i)\leq a_n(i+1)$ so (G$\infty$-1) is satisfied. Now given $n \in \nn$, choose $m=2n$. Then $a_n(i)^2=\exp(2ine^{in})\leq \exp(2ine^{2in})=a_m(i)$ holds for all $i,n \in \nn$. So (G$\infty$-2) is also satisfied. Hence $X$ is a $\ginf$-space. For nuclearity, consider
	\[
	\log(i) < in \quad \Rightarrow \quad i<e^{in} <ne^{in} \quad \Rightarrow \quad -ne^{in} <-i.
	\]
	Given $n \in \nn$, select $m>n$ and $M>0$ as in (G$\infty$-2). Then,
	\[
	\sumi \frac{a_n(i)}{a_m(i)} \leq \sumi M\frac{a_n(i)}{a_n(i)^2} = M\sumi \frac{1}{a_n(i)} \leq M\sumi e^{-i^2} < \infty.
	\]
	Therefore, \eqref{gpc} is satisfied. So $X$ is nuclear, in particular Schwartz. 
	
	\item \textit{$X$ enjoys property \eqref{dn-condition}}: Without loss of any generality assume that $a_1(i):=1$. Let us pick $s=1$. For any $i,n \in \nn$, and for a constant $C>0$ we clearly have
	\begin{align*}
	& & \log(2in)+in & \leq  \log(2in)+2in \\
	&\Rightarrow & 2ine^{in} &\leq  2ine^{2in} + \log(C)\\
	&\Leftrightarrow & e^{2ine^{in}} &\leq  C e^{2ine^{2in}}\\
	&\Leftrightarrow & a_n(i)^2 & \leq C a_{2n}(i)a_1(i),
	\end{align*}
	With the choice $m=2n$, we conclude that $X$ has property \eqref{dn-condition}. 
	
	\item \textit{$X$ is not isomorphic to a power series space of infinite type}: Let us define $f\colon \rn \to \rn^+$ by
	\[
	 f(x):=\begin{cases}
		xe^{x}, & \text{ if } x\geq 0\\
		xe^{-x}, & \text{ if } x<0
	\end{cases}
	\]
Clearly $f$ is an odd, positive, increasing, and logarithmically convex function. Then, for $(\alpha_i)_{i \in \nn}= (i)_{i \in \nn}$, $X$ is isomorphic to the Dragilev space $\dragilev$ of infinite type. For any $1<\rho<\infty$, 
\[
\lim_{x \to \infty} \frac{f(\rho x)}{f(x)} = \lim_{x \to \infty} \frac{\rho x e^{\rho x}}{xe^x} = \lim_{x \to \infty} \rho e^{(\rho-1)x} = \infty.
\]
Hence, $f$ is rapidly increasing. By the comments prior to Example~\ref{ginf_non_pss}, $X$ cannot be isomorphic to a power series space of infinite type.
\end{enumerate}
\end{proof}

\section{Continuity and compactness of $\ces$ on $\kozero$}
An operator $T$ on a Fréchet space $X$ into itself is called bounded (resp. compact) if there exists a neighborhood $U$ of the origin of $X$ such that $TU$ is a bounded (resp. relatively compact) set in $X$. Recall that a Hausdorff inductive limit $E=\ind_n E_n$ of Banach spaces is called \textit{regular} if every bounded subset $B$ of $E$ is contained and bounded in some step $E_n$. The following lemma is well-known.

\begin{lemma}\label{inductive limit continuity}
	Let $E=\ind_m E_m$ and $F=\ind_n F_n$ be (LB)-spaces such that $E$ (resp. $F$) is the union of the sequence of Banach spaces $E_m$ (resp. $F_n$). Let $T\colon E \to F$ be a linear operator. Then 
	\begin{enumerate}[label=\normalfont(\arabic*)]
		\item $T$ is continuous if and only if for all $m \in \nn$ there exists $n \in \nn$ such that $T(E_m)\subset F_n$ and $T\colon E_m \to F_n$ is continuous.
		\item Let $T$ be continuous and let $F$ be regular. Then $T$ is bounded if and only if there exists $n \in \nn$ such that for all $m$, $T(E_m) \subset F_n$ and $T\colon E_m \to F_n$ is continuous.
	\end{enumerate}
\end{lemma}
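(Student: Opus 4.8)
The plan is to treat the two statements separately, and within each to split the biconditional into its two implications.

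For part~(1), the reverse implication is the soft one. If for every $m$ there is an $n$ with $T(E_m)\subset F_n$ and $T\colon E_m\to F_n$ continuous, then composing with the continuous inclusion $F_n\hookrightarrow F$ shows that $T\circ j_m\colon E_m\to F$ is continuous for every $m$, where $j_m\colon E_m\to E$ denotes the canonical map. Since $E=\ind_m E_m$ carries the finest locally convex topology making all the $j_m$ continuous, the universal property of the inductive limit immediately yields continuity of $T\colon E\to F$. The forward implication carries the real content. Assuming $T$ continuous, for each fixed $m$ the restriction $T\circ j_m\colon E_m\to F$ is a continuous linear map from a Banach space into the (LB)-space $F$. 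Here I would invoke Grothendieck's factorization theorem: a continuous linear map from a Fréchet (in particular Banach) space into a countable inductive limit of Banach spaces must factor through one of the steps. This produces an index $n=n(m)$ with $T(E_m)\subset F_n$ and $T\colon E_m\to F_n$ continuous, as required.

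For part~(2), I would work with the explicit description of $0$-neighborhoods in an inductive limit: an absolutely convex set $U\subseteq E$ is a $0$-neighborhood exactly when $U\cap E_m$ is a $0$-neighborhood in each $E_m$. Write $B_m$ for the unit ball of $E_m$ and $D_n$ for the unit ball of $F_n$. For the forward implication, suppose $T$ is bounded, so there is a $0$-neighborhood $U$ in $E$ with $T(U)$ bounded in $F$. Regularity of $F$ is precisely what allows me to conclude that $T(U)$ is contained and bounded in a single step $F_n$. Fixing this $n$, for each $m$ the $0$-neighborhood $U\cap E_m$ contains some $\varepsilon_m B_m$, whence $T(B_m)\subset \varepsilon_m^{-1}T(U)$ is bounded in $F_n$; this gives $T(E_m)\subset F_n$ with $T\colon E_m\to F_n$ continuous, and crucially $n$ is independent of $m$. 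For the converse, given such a uniform $n$, continuity of each $T\colon E_m\to F_n$ supplies constants $c_m>0$ with $T(B_m)\subset c_m D_n$. I would then set $U:=\Gamma\big(\bigcup_m c_m^{-1}B_m\big)$, the absolutely convex hull; this is a $0$-neighborhood in $E$ because it contains $c_m^{-1}B_m$ for every $m$, and by construction $T(U)\subset \Gamma(D_n)=D_n$, which is bounded in $F$ since $F_n\hookrightarrow F$ is continuous. Hence $T$ is bounded.

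The main obstacle is the forward direction of~(1): the assertion that a continuous operator out of a Banach space into an (LB)-space cannot \emph{spread across infinitely many steps} but must land in a single one. This is not a formal consequence of the inductive limit structure and rests on a Baire-category argument, namely Grothendieck's factorization theorem; it is the crux on which both parts ultimately rest. By contrast, regularity of $F$ enters only in the forward direction of~(2), the converse construction there being purely topological-algebraic.
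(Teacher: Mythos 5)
Your proof is correct. The paper states this lemma without proof, labelling it as well-known, so there is nothing in the text to compare against; your write-up supplies exactly the standard argument: the universal property of the inductive limit for the easy direction of (1), Grothendieck's factorization theorem for the forward direction of (1) (which is indeed where the Baire-category content lives, since each $E_m$ is Banach and the steps $F_n$ are Fréchet spaces continuously embedded in the Hausdorff space $F$), and regularity of $F$ together with the characterization of absolutely convex $0$-neighborhoods of an inductive limit for (2). Your closing remark correctly isolates which hypothesis each implication actually uses.
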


\begin{proposition}\label{continuity}
	Let $\kotinf$ be a Schwartz Köthe echelon space of order 1. Then, $\ces\colon \kozero \to \kozero$ is continuous if and only if for all $n \in \nn$ there exists $m>n$ such that
	\begin{equation}\label{continuiuty criterion}
		\supi \frac{v_m(i)}{i}\sumji \frac{1}{v_n(j)} < \infty.
	\end{equation}
\end{proposition}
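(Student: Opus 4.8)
The plan is to apply Lemma~\ref{inductive limit continuity}(1) to the Cesàro operator viewed as a map between the inductive limit presentations $\kozero = \ind_n c_0(v_n)$. That is, $\ces$ is continuous if and only if for every $n \in \nn$ there exists $m > n$ such that $\ces$ maps $c_0(v_n)$ into $c_0(v_m)$ and does so continuously as a map between these Banach spaces. So the whole problem reduces to characterizing, for a fixed pair $n < m$, exactly when $\ces \colon c_0(v_n) \to c_0(v_m)$ is a bounded operator, and then to showing that the resulting condition on $(n,m)$ is equivalent to \eqref{continuiuty criterion}.

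First I would unwind the norm $q_m(\ces x)$ explicitly. For $x = (x_i) \in c_0(v_n)$ we have $(\ces x)_i = \frac{1}{i}\sumji x_j$, so
\[
	q_m(\ces x) = \supi v_m(i)\,\frac{1}{i}\Bigl|\sumji x_j\Bigr|.
\]
The natural strategy is to bound $|x_j|$ in terms of $q_n(x)$: since $v_n(j)|x_j| \leq q_n(x)$, we get $|x_j| \leq q_n(x)/v_n(j)$, and hence
\[
	q_m(\ces x) \leq q_n(x)\,\supi \frac{v_m(i)}{i}\sumji \frac{1}{v_n(j)}.
\]
This immediately shows that finiteness of the quantity in \eqref{continuiuty criterion} is \emph{sufficient} for the boundedness of $\ces \colon c_0(v_n) \to c_0(v_m)$, with operator norm controlled by that supremum. (One should also check the range inclusion $\ces(c_0(v_n)) \subseteq c_0(v_m)$, i.e.\ that $v_m(i)(\ces x)_i \to 0$; this follows from the same tail estimate together with condition \eqref{condition S}, which holds since $\kotinf$ is Schwartz.)

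For \emph{necessity}, the idea is to test the operator on the canonical unit vectors $e_j = (0,\dots,0,1,0,\dots)$, or finite truncations thereof, which are the extremal sequences for the $c_0(v_n)$ norm. Feeding a suitably scaled, sign-adjusted finite vector supported on $\{1,\dots,i\}$ into $\ces$ should recover the inner sum $\sumji 1/v_n(j)$ up to the factor $v_m(i)/i$, forcing the supremum in \eqref{continuiuty criterion} to be finite whenever $\ces \colon c_0(v_n)\to c_0(v_m)$ is bounded. Concretely, for each fixed $i$ one chooses $x_j = q_n(x)\,\mathrm{sgn}(\cdots)/v_n(j)$ for $j \leq i$ and $x_j = 0$ otherwise, so that the $i$-th coordinate of $\ces x$ equals $\frac{1}{i}\sumji 1/v_n(j)$ in modulus; then $q_m(\ces x) \geq v_m(i)\frac{1}{i}\sumji 1/v_n(j)$ while $q_n(x) = 1$, and taking the supremum over $i$ gives the bound.

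The main obstacle I anticipate is the bookkeeping in translating between the two equivalent forms of the continuity condition supplied by Lemma~\ref{inductive limit continuity}(1): the lemma gives a ``for all $n$ there exists $m$'' quantifier structure over the \emph{steps} of the inductive limit, and I must verify that the pointwise/coordinatewise estimate above is genuinely necessary and not merely sufficient — i.e.\ that the unit-vector test vectors actually lie in $c_0(v_n)$ (they are finitely supported, hence they do) and that taking a supremum over the test index $i$ is legitimate. A secondary subtlety is the $c_0$ (rather than $\ell_\infty$) nature of the steps: one must confirm that restricting to $c_0(v_n)$ does not weaken the necessity argument, which is fine because finitely supported sequences are dense and lie in every $c_0(v_n)$. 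Once these quantifier and density matters are handled cleanly, sufficiency and necessity close up the equivalence.
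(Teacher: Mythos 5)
Your argument is correct and follows essentially the same route as the paper: reduce via Lemma~\ref{inductive limit continuity}(1) to the continuity of $\ces\colon c_0(v_n)\to c_0(v_m)$ at each step, the paper then simply citing \cite[Proposition 2.2(i)]{ABRwc0} for the Banach-space characterization that you instead prove directly (sufficiency by the triangle inequality, necessity by testing on the finitely supported vectors $x_j=1/v_n(j)$, $j\le i$). The one genuine subtlety — that the supremum condition a priori only gives boundedness into $\ell_\infty(v_m)$, so one must enlarge $m$ using the Schwartz condition \eqref{condition S} to land in $c_0(v_{m'})$ — is exactly the point you flag and dispose of correctly.
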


\begin{proof}
	Follows directly from Lemma~\ref{inductive limit continuity}, and \cite[Proposition 2.2(i)]{ABRwc0}.
\end{proof}

\begin{corollary}
	Let $A$ be a Köthe matrix satisfying (G$\infty$-1), and let $\kotinf$ be Schwartz. Then $\ces\in \clo(\kozero)$.
\end{corollary}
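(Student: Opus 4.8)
The plan is to derive the corollary directly from the continuity criterion of Proposition~\ref{continuity}, showing that hypothesis (G$\infty$-1) forces \eqref{continuiuty criterion} to hold with the uniform bound $1$. Since $\kotinf$ is assumed Schwartz, Proposition~\ref{continuity} is available, and it reduces the assertion $\ces \in \clo(\kozero)$ to verifying that for every $n \in \nn$ there is some $m > n$ with $\supi \frac{v_m(i)}{i}\sumji \frac{1}{v_n(j)} < \infty$.

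First I would fix $n \in \nn$ and take any $m > n$, say $m = n+1$. The key move is to estimate the inner partial sum using the monotonicity supplied by (G$\infty$-1): writing $1/v_n(j) = a_n(j)$ and recalling that $a_n(\cdot)$ is nondecreasing in its coordinate, one has $a_n(j) \leq a_n(i)$ for all $1 \leq j \leq i$, hence $\sumji \frac{1}{v_n(j)} \leq i\, a_n(i) = \frac{i}{v_n(i)}$. Substituting this bound into the criterion cancels the weight $1/i$ and leaves $\frac{v_m(i)}{i}\sumji \frac{1}{v_n(j)} \leq \frac{v_m(i)}{v_n(i)} = \frac{a_n(i)}{a_m(i)}$.

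To finish, I would invoke the monotonicity of the Köthe matrix in its first index, (K1), which gives $a_n(i) \leq a_m(i)$ for $m > n$, so that $\frac{a_n(i)}{a_m(i)} \leq 1$ for every $i$. Taking the supremum over $i$ then yields $\supi \frac{v_m(i)}{i}\sumji \frac{1}{v_n(j)} \leq 1 < \infty$, which is exactly \eqref{continuiuty criterion}. By Proposition~\ref{continuity} this establishes continuity of $\ces$, i.e. $\ces \in \clo(\kozero)$.

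I do not anticipate a genuine obstacle here: the whole argument rests on two elementary monotonicities—in the coordinate $i$ from (G$\infty$-1) and in the level $n$ from (K1)—and the Schwartz hypothesis enters only to license the use of Proposition~\ref{continuity} (and to guarantee $\kozero = (\kotinf)'_b$). The single point calling for a little care is arranging the partial-sum estimate so that it precisely absorbs the factor $1/i$ appearing in \eqref{continuiuty criterion}; once this is noticed, no further input—in particular no appeal to condition \eqref{condition S}—is needed.
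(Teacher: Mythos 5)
Your proof is correct and follows essentially the same route as the paper: both reduce the claim to Proposition~\ref{continuity} and use (G$\infty$-1) to bound the partial sum $\sumji \frac{1}{v_n(j)}$ by $\frac{i}{v_n(i)}$, which cancels the factor $\frac{1}{i}$ and leaves $\supi \frac{v_m(i)}{v_n(i)}$. The only difference is the final step: the paper chooses $m$ via the Schwartz condition \eqref{condition S} to make this supremum finite, whereas you use the monotonicity (K1) of the Köthe matrix to get the sharper bound $1$ for any $m>n$ --- a marginally more elementary finish which makes clear that the Schwartz hypothesis is needed only to license Proposition~\ref{continuity} itself.
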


\begin{proof}
Since $\kotinf$ is Schwartz, for any $n \in \nn$ pick $m>n$ as in condition \eqref{condition S}. Hence, (G$\infty$-1) yields
\[
\supi \frac{v_m(i)}{i}\sumji \frac{1}{v_n(j)} \leq \supi\frac{iv_m(i)}{iv_n(i)} = \supi \frac{v_m(i)}{v_n(i)} < \infty.
\]
Thus, \eqref{continuiuty criterion} holds, and $\ces \in \clo(\kozero)$ by Proposition~\ref{continuity}.
\end{proof}

Now let us give a characterization for the compactness of $\ces$ in $\clo(\kozero)$. The following proposition is a direct consequence of Lemma~\ref{inductive limit continuity} and \cite[Proposition 2.2(ii)]{ABRwc0}, so we omit its proof.

\begin{proposition}\label{compactness}
	Let $A$ be a Köthe matrix satisfying (G$\infty$-1), and let $\kotinf$ be Schwartz. Then, $\ces\colon \kozero \to \kozero$ is compact if and only if there exists $n \in \nn$ such that for all $m>n$ one has
	\begin{equation}\label{compactness criterion}
		\limii \frac{v_m(i)}{i}\sumji \frac{1}{v_n(j)} =0.
	\end{equation}
\end{proposition}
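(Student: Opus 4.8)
The plan is to reduce the compactness of $\ces$ on the $(LB)$-space $\kozero=\ind_n c_0(v_n)$ to a single-step condition on the weighted Banach spaces $c_0(v_n)$, in the same spirit as Proposition~\ref{continuity} reduces continuity, and then to feed the weighted-step data into the compactness criterion of \cite[Proposition 2.2(ii)]{ABRwc0}. The conceptual engine is Proposition~\ref{kotinf always schwartz}(6): $\kozero$ is Montel. Two consequences are then free of charge. First, in a semi-Montel space every bounded set is relatively compact, so an operator is compact exactly when it is bounded; hence it suffices to decide when $\ces$ is \emph{bounded}. Second, a Montel $(LB)$-space is regular, which is the standing hypothesis of Lemma~\ref{inductive limit continuity}(2). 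Since $\ces$ is moreover continuous under the present hypotheses (the corollary to Proposition~\ref{continuity}), both inputs of Lemma~\ref{inductive limit continuity}(2) are in place.

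First I would apply Lemma~\ref{inductive limit continuity}(2): $\ces$ is bounded, hence compact, precisely when there is a \emph{single} step into which $\ces$ maps every step continuously. The next move converts this single-step \emph{continuity} into single-step \emph{compactness}, so that \cite[Proposition 2.2(ii)]{ABRwc0} rather than \cite[Proposition 2.2(i)]{ABRwc0} may be applied. This is where the Schwartz property is used: by \eqref{condition S} the linking maps $c_0(v_n)\hookrightarrow c_0(v_m)$ between consecutive steps are compact, so composing a continuous single-step map with one further linking map yields a compact one at the cost of enlarging an index by one. Invoking \cite[Proposition 2.2(ii)]{ABRwc0} then replaces the abstract single-step compactness by the vanishing of the corresponding weighted Cesàro quantity, which is exactly the limit recorded in \eqref{compactness criterion}; matching the ``single distinguished step, all others'' quantifier of Lemma~\ref{inductive limit continuity}(2) with this limit produces the displayed form ``there exists $n$ such that for all $m>n$''. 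Both implications are then symmetric: for necessity one extracts the distinguished step from a compact $\ces$ and reads off \eqref{compactness criterion}; for sufficiency one runs the equivalences backwards, using the Montel property a final time to pass from boundedness back to compactness.

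The step I expect to be the main obstacle is the careful bookkeeping of quantifiers and of the source/target roles of the two indices. Lemma~\ref{inductive limit continuity}(2) isolates one step and quantifies over the rest, whereas \eqref{compactness criterion} fixes $n$ and ranges over all larger $m$; reconciling the two relies on the monotonicity $c_0(v_n)\subseteq c_0(v_m)$ for $m\ge n$ together with \eqref{continuity by norm}, which allow indices to be enlarged without spoiling the relevant estimates. The subtler point is to ensure that the limit in \eqref{compactness criterion} really encodes compactness and not merely boundedness of the single-step maps; it is precisely the Schwartz/Montel hypothesis, entering through the compactness of the linking maps, that closes this gap, and the whole argument parallels the continuity statement of Proposition~\ref{continuity} with the supremum in \eqref{continuiuty criterion} replaced by a limit.
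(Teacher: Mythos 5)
Your strategy is exactly the one the paper has in mind (it omits the proof, citing precisely Lemma~\ref{inductive limit continuity} and \cite[Proposition 2.2(ii)]{ABRwc0}), and the reduction compact $\Leftrightarrow$ bounded via the Montel property of $\kozero$, together with regularity, is the right way to bring Lemma~\ref{inductive limit continuity}(2) into play. The genuine gap is the step you yourself single out as ``the main obstacle'': the reconciliation of quantifiers cannot be carried out, and monotonicity of the steps does not do it. Lemma~\ref{inductive limit continuity}(2) combined with \cite[Proposition 2.2]{ABRwc0} characterizes boundedness (hence, in the Montel case, compactness) of $\ces$ as
\[
\exists\, N\ \forall\, m:\qquad \supi\frac{v_N(i)}{i}\sumji\frac{1}{v_m(j)}<\infty,
\]
in which the distinguished index $N$ is the \emph{target} weight (the factor in front) and the universally quantified $m$ is the \emph{source} weight (inside the sum). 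In \eqref{compactness criterion} the roles are reversed: the distinguished $n$ sits inside the sum and the quantified $m$ sits outside. Enlarging an index only weakens the corresponding one-step condition; it never moves the existential quantifier from the target slot to the source slot, so the two conditions are not interchangeable.

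They are in fact inequivalent, and it is the sufficiency half of your argument (``running the equivalences backwards'') that breaks. Take $a_n(i)=e^{in}$, i.e.\ $v_n(i)=e^{-in}$, the nuclear $\ginf$-matrix of Remark~\ref{stronger than nuclearity}. Since $\sumji e^{nj}\le\frac{e^n}{e^n-1}e^{ni}\le 2e^{ni}$, one gets $\frac{v_m(i)}{i}\sumji\frac{1}{v_n(j)}\le\frac{2}{i}e^{(n-m)i}\igoes 0$ for every $m>n$, so \eqref{compactness criterion} holds for every $n$. On the other hand, for every $N$ and every $m>N$ one has $\supi\frac{v_N(i)}{i}\sumji\frac{1}{v_m(j)}\ge\supi\frac{e^{(m-N)i}}{i}=\infty$, so $\ces$ is not bounded on $\kozero$ and hence not compact; this agrees with the Corollary following Proposition~\ref{spectrum nuclear case}, by which $\ces$ is never compact when $\kozero$ is nuclear. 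Consequently no argument can carry you from \eqref{compactness criterion} to compactness of $\ces$; what your scheme actually establishes is the displayed characterization above (with $\sup$ upgraded to $\lim$ by composing with one compact linking map, as you describe). The source/target mismatch lies in the statement as printed as much as in your proof, but your plan as written does not detect it and would assert an implication that is false.
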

The Köthe echelon space $\kotinf$ of order 1 is said to be a \textit{smooth sequence space of finite type} (or a $G_1$-space) \cite[Section 3]{Ter69} if $A$ satisfies
\begin{enumerate}[label=(G1-\arabic*)]
	\item $0 < a_n(i+1) \leq a_n(i)$, for all $n \in \nn$ and $i \in \nn$.
	\item For all $n \in \nn$ there exist $m>n$ and $C>0$ such that $a_n(i) \leq C {a_m(i)}^2$, for all $i \in \nn$.
\end{enumerate}
The Cesàro operator on $G_1$-spaces was studied by the author in \cite{Kiz18}. The following proposition shows that $\ces$ is not continuous on duals of nuclear $G_1$-spaces.
\begin{proposition}\label{ces does not act in g1}
	Let $A$ be a Köthe matrix satisfying (G1-1), and suppose $\kotinf$ is nuclear. Then, the Cesàro operator $\ces$ does not belong to $\clo(\kozero)$.
\end{proposition}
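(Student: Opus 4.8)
The plan is to apply the continuity criterion of Proposition~\ref{continuity} and show that it cannot be satisfied. Since $\kotinf$ is nuclear it is in particular Schwartz, so Proposition~\ref{continuity} applies: $\ces\colon\kozero\to\kozero$ is continuous if and only if for every $n\in\nn$ there is $m>n$ with $\supi \frac{v_m(i)}{i}\sumji \frac{1}{v_n(j)}<\infty$. Writing $v_k(i)=1/a_k(i)$, the quantity inside the supremum is $\frac{1}{i\,a_m(i)}\sumji a_n(j)$. I would prove the strong negation: for every $n$ and every $m>n$ this supremum equals $+\infty$. In particular no admissible $m$ exists even for $n=1$, so the criterion fails and $\ces\notin\clo(\kozero)$. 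It is worth noting in advance that only (G1-1) and nuclearity enter the argument; condition (G1-2) plays no role here.

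The key step is to convert nuclearity into summability of the individual rows of $A$. By the Grothendieck--Pietsch criterion \cite[Proposition 28.15]{Vog97}, nuclearity of $\kotinf$ yields, for each $n$, an index $m>n$ with $\sumi \frac{a_n(i)}{a_m(i)}<\infty$. Because (G1-1) forces $(a_m(i))_i$ to be decreasing, we have $a_m(i)\le a_m(1)$ for all $i$, whence $\frac{a_n(i)}{a_m(i)}\ge \frac{a_n(i)}{a_m(1)}$ and therefore $\sumi a_n(i)\le a_m(1)\sumi \frac{a_n(i)}{a_m(i)}<\infty$. Thus $\sumi a_n(i)<\infty$ for every $n\in\nn$; this is the finite-type analogue, on the dual side, of condition \eqref{vn summable} from Proposition~\ref{main equivalences 1}.

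It then remains to exploit this summability together with monotonicity. For each fixed $m$ the sequence $(a_m(i))_i$ is positive, decreasing, and summable, so the classical fact that a decreasing summable sequence satisfies $i\,a_m(i)\to 0$ applies (one bounds a tail of the series by $\sum_{j=i+1}^{2i}a_m(j)\ge i\,a_m(2i)$, which tends to $0$, and similarly for odd indices). Hence $i\,a_m(i)\igoes 0$. On the other hand $\sumji a_n(j)\ge a_n(1)>0$ for all $i$, the positivity coming again from (G1-1). Consequently $\frac{1}{i\,a_m(i)}\sumji a_n(j)\ge \frac{a_n(1)}{i\,a_m(i)}\igoes +\infty$, so the supremum over $i\in\nn$ is $+\infty$. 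Since this holds for every pair $n<m$, the criterion of Proposition~\ref{continuity} cannot be met, and $\ces$ does not belong to $\clo(\kozero)$.

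I expect the only genuinely delicate point to be the reduction in the second paragraph: one must apply the Grothendieck--Pietsch criterion with the correct roles of $a_n$ and $a_m$ and use precisely the upper bound $a_m(i)\le a_m(1)$. This boundedness of the rows is exactly the feature of the finite-type (decreasing) case that is absent in the infinite-type (increasing) case, and it is what forces $\ces$ to fail to be continuous here, in sharp contrast with the infinite-type setting where $\ces$ is always continuous on $\kozero$. Everything else is monotonicity bookkeeping and a standard Abel-type estimate.
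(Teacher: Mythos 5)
Your proof is correct and follows essentially the same route as the paper: both arguments show that the criterion \eqref{continuiuty criterion} of Proposition~\ref{continuity} must fail because $\frac{v_m(i)}{i}\sumji\frac{1}{v_n(j)}\geq \frac{1}{i\,a_m(i)\,v_n(1)}\igoes\infty$. The only difference is that the paper imports the key fact $\frac{i}{v_m(i)}=i\,a_m(i)\igoes 0$ from \cite[Theorem 1]{Kiz18}, whereas you derive it directly from the Grothendieck--Pietsch criterion, the monotonicity in (G1-1), and the standard estimate for decreasing summable sequences; this makes your write-up self-contained but does not change the argument.
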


\begin{proof}
	Since $\kozero$ is nuclear, by \cite[Theorem 1]{Kiz18}, $\frac{i}{v_n(i)} \igoes 0$, for all $n \in \nn$. Now suppose $\ces$ is continuous on $\kozero$. Then by Proposition~\ref{continuity} and (G1-1), for $n=1$, there exists $m>1$ such that
\[
	M v_1(1) \geq v_1(1) \frac{v_m(i)}{i} \sumji \frac{1}{v_1(j)} \geq \frac{v_m(i)}{i v_1(1)}v_1(1) \igoes \infty,
\]
for a constant $M>0$. This is a contradiction. Hence, $\ces \notin \clo(\kozero)$.
\end{proof}
Let $D\colon\cnn \to \cnn$ be the formal operator of differentiation defined by $D(x_1, x_2, x_3, \dots) = (x_2, 2x_3, 3x_4, \dots)$, $x=(x_i)_i$. $D$ is closely related to the Cesàro operator $\ces \in \clo(\cnn)$ by the identity $\ices=(I-S_r)\circ D\circ S_r$, where $S_r \in \clo(\cnn)$ is the right-shift operator. The following result is proved via a similar argument in \cite[Proposition 7]{Kiz18}.
\begin{proposition}
	Let $A$ be a Köthe matrix. Then, for the co-echelon space $\kozero$ and the formal differentiation operator $D$, the following statements are equivalent:
	\begin{enumerate}[label=\normalfont(\arabic*)]
		\item The differentiation operator $D\colon \kozero \to \kozero$ is continuous.
		\item For all $n \in \nn$ there exist $m>n$ and $M>0$ such that
		\[
			\supi iv_m(i)\leq Mv_n(i+1).
		\]
	\end{enumerate}
\end{proposition}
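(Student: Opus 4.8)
The plan is to pass through the Banach steps of the inductive limit $\kozero = \ind_n c_0(v_n)$ and apply Lemma~\ref{inductive limit continuity}(1), extracting the concrete inequality in (2) by testing $D$ on the canonical unit vectors. Recall first that the steps are nested, $c_0(v_n) \subseteq c_0(v_m)$ with $q_m(x) \le q_n(x)$ whenever $m \ge n$ by \eqref{continuity by norm}, and that $(Dx)_i = i\,x_{i+1}$, so that $De_{i+1} = i\,e_i$ for the standard basis $(e_j)_j$. Since these facts depend only on the monotonicity of the weights, no $\ginf$ or Schwartz hypothesis is required.

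For the implication (2) $\Rightarrow$ (1), I would fix $n \in \nn$, take $m > n$ and $M > 0$ as in (2), and estimate, for $x \in c_0(v_n)$,
\[
	v_m(i)\,|(Dx)_i| = \frac{i\,v_m(i)}{v_n(i+1)}\, v_n(i+1)\,|x_{i+1}| \le M\, v_n(i+1)\,|x_{i+1}|.
\]
Taking the supremum over $i$ yields $q_m(Dx) \le M\, q_n(x)$, while the same bound together with $v_n(i+1)|x_{i+1}| \to 0$ shows that $Dx$ actually lies in $c_0(v_m)$, not merely in $\ell_\infty(v_m)$. Thus $D\colon c_0(v_n) \to c_0(v_m)$ is continuous for every $n$, and Lemma~\ref{inductive limit continuity}(1) delivers the continuity of $D$ on $\kozero$.

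For the converse (1) $\Rightarrow$ (2), I would invoke Lemma~\ref{inductive limit continuity}(1) in the other direction: continuity of $D$ furnishes, for each $n$, an index $m$ and a constant $M > 0$ with $q_m(Dx) \le M\, q_n(x)$ for all $x \in c_0(v_n)$. One may assume $m > n$, since replacing $m$ by any larger index only decreases the target norm by \eqref{continuity by norm}. Evaluating the estimate at the finitely supported vector $x = e_{i+1} \in c_0(v_n)$, and using $q_n(e_{i+1}) = v_n(i+1)$ together with $q_m(De_{i+1}) = q_m(i\,e_i) = i\,v_m(i)$, I obtain $i\,v_m(i) \le M\, v_n(i+1)$ for every $i \in \nn$, which is precisely (2). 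The argument is entirely routine once the reduction to the Banach steps is in place; the only points demanding attention are the index shift in $De_{i+1} = i\,e_i$ and the verification that the image lands in $c_0$ rather than only in $\ell_\infty$, neither of which poses a genuine obstacle.
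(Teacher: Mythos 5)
Your proof is correct and follows the route the paper intends: the paper itself gives no written proof but delegates to the analogous argument in \cite[Proposition 7]{Kiz18}, which is exactly this reduction to the Banach steps via Lemma~\ref{inductive limit continuity}(1) together with the standard continuity criterion for the weighted shift $\phi_m \circ D \circ \phi_n^{-1}$ on $c_0$. The index bookkeeping ($De_{i+1}=i\,e_i$, enlarging $m$ via \eqref{continuity by norm}, landing in $c_0(v_m)$ rather than $\ell_\infty(v_m)$) is handled correctly.
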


\begin{example}
	\normalfont Consider the nuclear $\ginf$-space $X$ constructed in Example~\ref{ginf_non_pss}. For $Y:=k_0(V)=(\kotinf)'=X'$, choose $m=2n$ to observe
	\begin{align*}
	\frac{iv_m(i)}{v_n(i+1)} = \frac{i\exp(-2ine^{2in})}{\exp(-ine^{(i+1)n})}  & = \exp(\log(i)+ine^{(i+1)n}-2ine^{2in}) \\
					& = \exp\left(\left(\frac{\log(i)}{ine^{2in}}+e^ne^{-in}-2\right)ine^{2in}\right) \igoes 0.
	\end{align*}
	Hence, $D\colon Y \to Y$ is continuous.
\end{example}

\begin{proposition}\label{characterization nuclearity} 
	For a $\ginf$-Köthe matrix $A$, and the associated co-echelon space $\kozero$, the following statements are equivalent:
	\begin{enumerate}[label=\normalfont(\arabic*)]
		\item $\kozero$ is nuclear.
		\item For all $n \in \nn$ there exists $m>n$ such that
		\[
			\supi \frac{iv_m(i)}{v_n(i)} < \infty. \tag{N}\label{nuclearity criterion}
		\]
		\item Given $\alpha \in \rn$, for all $n \in \nn$ there exists $m>n$ such that
		\[
			\supi \frac{i^\alpha v_m(i)}{v_n(i)} < \infty. \tag{SN}\label{strong nuclearity criterion}
		\]
	\end{enumerate}
\end{proposition}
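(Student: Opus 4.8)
The plan is to establish the three statements equivalent through the cycle (1)~$\Rightarrow$~(2), (2)~$\Rightarrow$~(1), and (2)~$\Leftrightarrow$~(3), exploiting throughout the two monotonicity facts forced by (G$\infty$-1) and (K1): since $a_n(i)\geq 1$ and $a_n(i)\leq a_n(i+1)$, each weight $v_n=1/a_n$ satisfies $v_n(i)\leq 1$ and is nonincreasing in $i$, while $a_n(i)\leq a_{n+1}(i)$ makes $v_n(i)\geq v_{n+1}(i)$ nonincreasing in $n$ as well. The implication (3)~$\Rightarrow$~(2) is then immediate by taking $\alpha=1$ in \eqref{strong nuclearity criterion}, so the real content lies in the remaining directions.

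The heart of the argument is (1)~$\Rightarrow$~(2). Assuming $\kozero$ nuclear, Proposition~\ref{main equivalences 1} supplies an index $n_0$ with $S:=\sumi v_{n_0}(i)<\infty$, i.e. condition \eqref{vn summable}. Because $v_{n_0}$ is nonincreasing, $i\,v_{n_0}(i)\leq \sumji v_{n_0}(j)\leq S$ for every $i$, so $v_{n_0}$ decays at least like $1/i$. Now fix $n$ and set $k=\max(n,n_0)$. Applying (G$\infty$-2) to $k$ yields $m>k\geq n$ and $M>0$ with $v_m(i)\leq M\,v_k(i)^2$; since $v_k(i)\leq v_n(i)$ and $v_k(i)\leq v_{n_0}(i)$, this gives $v_m(i)\leq M\,v_n(i)\,v_{n_0}(i)$, and substituting produces $\frac{i\,v_m(i)}{v_n(i)}\leq M\,i\,v_{n_0}(i)\leq MS$, which is exactly \eqref{nuclearity criterion}. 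This step — converting the summability of a single weight into the bound $\supi i\,v_m(i)/v_n(i)<\infty$ — is where the special structure of $\ginf$-spaces is essential, and I expect it to be the main obstacle: one must combine \eqref{vn summable} with the quadratic self-domination (G$\infty$-2) at the correct index $k=\max(n,n_0)$ so that the surplus factor $v_{n_0}(i)$ is precisely what absorbs the linear factor $i$.

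The remaining implications are obtained by iterating \eqref{nuclearity criterion}. For (2)~$\Rightarrow$~(1), given $n$ I would apply (N) twice to produce $n<m_1<m_2$ with $\frac{v_{m_1}(i)}{v_n(i)}\leq C_1/i$ and $\frac{v_{m_2}(i)}{v_{m_1}(i)}\leq C_2/i$; multiplying gives $\frac{v_{m_2}(i)}{v_n(i)}\leq C_1C_2/i^2$, whence $\sumi \frac{v_{m_2}(i)}{v_n(i)}<\infty$, i.e. condition \eqref{gpc}, and nuclearity follows from Proposition~\ref{main equivalences 1}. For (2)~$\Rightarrow$~(3), fix $\alpha\in\rn$ and $n$; if $\alpha\leq 0$ then $i^\alpha\leq 1$ and \eqref{nuclearity criterion} already bounds $\frac{v_m(i)}{v_n(i)}$, so assume $\alpha>0$ and pick an integer $k\geq\alpha$. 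Iterating (N) $k$ times along $n<m_1<\cdots<m_k$ and forming the telescoping product yields $\supi\frac{i^k v_{m_k}(i)}{v_n(i)}\leq \prod_{j=1}^{k} C_j<\infty$; since $i^\alpha\leq i^k$ for $i\geq 1$, the choice $m=m_k$ verifies \eqref{strong nuclearity criterion}. The only care needed in these iterations is that each successive index strictly exceeds the previous one, which is guaranteed by the ``$m>n$'' clause built into (N).
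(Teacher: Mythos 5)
Your proof is correct, but it takes a genuinely different route from the paper. The paper's own proof is not self-contained: it simply defers to \cite[Proposition 9]{Kiz18}, indicating that the argument there is adapted by introducing the auxiliary matrix $b_n(i):=\prod_{j=1}^n a_j(i)$ for the implication (1)~$\Rightarrow$~(2). You instead give a complete argument using only results already established in this paper, and your key step is a clean elementary observation: since \eqref{vn summable} provides $n_0$ with $S=\sumi v_{n_0}(i)<\infty$ and $v_{n_0}$ is nonincreasing by (G$\infty$-1), one gets $i\,v_{n_0}(i)\leq \sumji v_{n_0}(j)\leq S$ for free, and then (G$\infty$-2) applied at $k=\max(n,n_0)$ splits $v_m(i)\leq M v_k(i)^2\leq M v_n(i)v_{n_0}(i)$ so that the factor $v_{n_0}(i)$ absorbs the linear growth $i$. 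The iteration arguments for (2)~$\Rightarrow$~(1) (two applications of \eqref{nuclearity criterion} give $v_{m_2}/v_n=O(i^{-2})\in\ell_1$, hence \eqref{gpc}) and for (2)~$\Rightarrow$~(3) (a telescoping product of $k\geq\alpha$ applications) are standard and correct, as is the trivial case $\alpha\leq 0$ and the reduction (3)~$\Rightarrow$~(2) with $\alpha=1$. What your approach buys is self-containedness and transparency about exactly which structural hypotheses are used (the summability of a single weight, its monotonicity, and the quadratic self-domination); what the paper's approach buys is brevity and consistency with the companion results in \cite{Kiz18} for the finite-type case. I see no gap in your argument.
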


\begin{proof}
The proof reads as \cite[Proposition 9]{Kiz18}. In the implication (1) $\Rightarrow$ (2) if we set $b_n(i):=\prod_{j=1}^n a_j(i)$, the rest follows with the same arguments.
\end{proof}

\begin{proposition}\label{nuclear rapidly decreasing}
	Given a real number $\alpha \geq 1$. Then, for a $\ginf$-Köthe matrix $A$, and the associated co-echelon space $\kozero$, the following statements are equivalent:
	\begin{enumerate}[label=\normalfont(\arabic*)]
		\item $\kozero$ is nuclear.
		\item There exists $n \in \nn$ such that $\limii i^\alpha v_n(i)=0$.
		\item There exists $n \in \nn$ such that $\limii i^\alpha v_n(i)=L<\infty$.
	\end{enumerate}
\end{proposition}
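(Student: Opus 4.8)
The plan is to establish the cycle $(1) \Rightarrow (2) \Rightarrow (3) \Rightarrow (1)$, noting that $(2) \Rightarrow (3)$ is immediate (take $L=0$). Throughout I would lean on the fact that for a $\ginf$-matrix condition (G$\infty$-1) forces each $v_n = 1/a_n$ to be decreasing with $v_n(i) \leq 1$, and that (G$\infty$-2) can be read as $v_m(i) \leq M\, v_n(i)^2$; this quadratic estimate is the mechanism that converts polynomial decay of order $\alpha$ into decay of order $2\alpha$, and it is what makes the value $\alpha \geq 1$ relevant.

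For $(1) \Rightarrow (2)$ I would combine the two characterizations of nuclearity already at hand. First, by Proposition~\ref{main equivalences 1}\eqref{vn summable}, nuclearity of $\kozero$ yields an index $n_0$ with $\sumi v_{n_0}(i) < \infty$, whence $v_{n_0}(i) \to 0$ as $i \to \infty$. Second, by the strong nuclearity criterion \eqref{strong nuclearity criterion} of Proposition~\ref{characterization nuclearity}, applied to the given exponent $\alpha$ and to the index $n_0$, there exist $m > n_0$ and $C > 0$ with $i^\alpha v_m(i) \leq C\, v_{n_0}(i)$ for all $i \in \nn$. Letting $i \to \infty$ forces $\limii i^\alpha v_m(i) = 0$, so $(2)$ holds with $n = m$.

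The substance lies in $(3) \Rightarrow (1)$, where the restriction $\alpha \geq 1$ together with (G$\infty$-2) does the work. Assuming $\limii i^\alpha v_n(i) = L < \infty$, convergence supplies a bound $v_n(i) \leq K\, i^{-\alpha}$ valid for all $i$ and some $K > 0$. By (G$\infty$-2) there are $m > n$ and $M > 0$ with $v_m(i) \leq M\, v_n(i)^2 \leq M K^2\, i^{-2\alpha}$. Since $\alpha \geq 1$ we have $2\alpha \geq 2 > 1$, so $\sumi i^{-2\alpha} < \infty$ and therefore $\sumi v_m(i) < \infty$. This is precisely condition \eqref{vn summable}, and Proposition~\ref{main equivalences 1} then delivers nuclearity of $\kozero$.

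The main obstacle — indeed essentially the only delicate point — is the boundary case $\alpha = 1$. For $\alpha > 1$ the bound $v_n(i) \leq K\, i^{-\alpha}$ is already summable, and $(3) \Rightarrow (1)$ requires nothing beyond Proposition~\ref{main equivalences 1}; it is exactly when $\alpha = 1$ that $i^{-\alpha}$ fails to be summable and one is compelled to invoke the quadratic estimate (G$\infty$-2) to upgrade the decay to the summable order $i^{-2\alpha}$. I would also take care that no extra monotonicity or Schwartz hypothesis is smuggled in beyond what (G$\infty$-1)--(G$\infty$-2) already provide; this is my reason for routing $(1) \Rightarrow (2)$ through \eqref{strong nuclearity criterion} rather than through the weaker observation that a decreasing summable sequence satisfies $i\, v_n(i) \to 0$, which would handle only $\alpha = 1$ and not the full range $\alpha \geq 1$.
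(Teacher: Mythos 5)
Your argument is correct, and its overall shape matches the paper's: both proofs cycle through the equivalent characterizations of nuclearity from Propositions~\ref{main equivalences 1} and \ref{characterization nuclearity}, with the quadratic estimate (G$\infty$-2), read as $v_m(i)\leq Mv_n(i)^2$, doing the work in $(3)\Rightarrow(1)$. For $(1)\Rightarrow(2)$ the paper likewise goes through \eqref{nuclearity criterion}/\eqref{strong nuclearity criterion}, writing $i^\alpha v_m(i)\leq \frac{i^\alpha v_m(i)}{v_n(i)}\igoes 0$ and citing an external proof for the convergence to zero; your version of pairing the bound $i^\alpha v_m(i)\leq C\,v_{n_0}(i)$ from \eqref{strong nuclearity criterion} with $v_{n_0}(i)\igoes 0$ from \eqref{vn summable} makes that step self-contained. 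The genuine divergence is in $(3)\Rightarrow(1)$: the paper applies (G$\infty$-2) twice to produce indices $n<m<p$ and verifies \eqref{gpc} via the chain $\frac{v_p(i)}{v_n(i)}\leq CMv_n(i)\leq \frac{CML}{i^\alpha}$, which is summable only for $\alpha>1$ (and indeed the paper's proof text restricts to ``a given $\alpha>1$'' at that point, in tension with the stated hypothesis $\alpha\geq 1$); you instead apply (G$\infty$-2) once to get $v_m(i)\leq MK^2 i^{-2\alpha}$ and conclude via \eqref{vn summable}, which handles the boundary case $\alpha=1$ cleanly since $2\alpha\geq 2>1$. So your route is not only valid but, as written, covers the full stated range where the paper's displayed estimate would need the extra power of $v_n(i)$ reinstated to do so.
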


\begin{proof}
	(1) $\Rightarrow (2)$ Let $\kozero$ be nuclear. For any $n \in \nn$ pick $m>n$ as in \eqref{nuclearity criterion}. Then, by (G$\infty$-1) and proof of \cite[Proposition 9]{Kiz18}, we have
	\[
	i^\alpha v_m(i) \leq \frac{i^\alpha v_m(i)}{v_n(i)} \igoes 0.
	\]
	
	(2) $\Rightarrow$ (3) Trivial.
	
	(3) $\Rightarrow$ (1) Let us have $n \in \nn$ with $i^\alpha v_n(i) \igoes L$, for a given $\alpha>1$. For this $n$ we pick $m>n$ and $C>0$ as in (G$\infty$-2) and similarly we find $p>m$ and $M>0$ as in (G$\infty$-2). Then, for all $i \in \nn$,
	\[
	\frac{v_p(i)}{v_n(i)} \leq C\frac{v_p(i)}{v_m(i)} \leq CM\frac{v_m(i)^2}{v_m(i)} \leq CMv_n(i) \leq \frac{CML}{i^\alpha} \in \ell_1. 
	\]
	Hence, $(\frac{v_p(i)}{v_n(i)})_i \in \ell_1$ and so \eqref{gpc} is satisfied. Therefore, $\kotinf$ is nuclear.
\end{proof}

\section{Spectrum of $\ces$ in the nuclear case}
For a locally convex Hausdorff space $X$ and $T \in \clo(X)$, the \textit{resolvent set} $\rho(X;T)$ of $T$ consists of all $\lambda \in \cn$ such that $(\lambda I-T)^{-1}$ exists in $\clo(X)$. The set $\sigma(T;X):=\cn\setminus\rho(T;X)$ is called the \textit{spectrum} of $T$ on $X$. The \textit{point spectrum} $\sigma_{pt}(T;X)$ of $T$ on $X$ consists of all $\lambda \in \cn$ such that $(\lambda I-T)$ is not injective. Unlike for Banach spaces, it might happen that $\rho(T;X)=\varnothing$ or that $\rho(T;X)$ is not open in $\cn$. That is why some authors prefer the subset $\rho^*(T;X)$ of $\rho(T;X)$ consisting of all $\lambda \in \cn$ for which there exists $\delta>0$ such that the open disk $\lamball:=\{z \in \cn:|z-\lambda|<\delta\}\subseteq \rho(T)$ and $\{R(\mu,T):\mu \in \lamball\}$ is equicontinuous in $\clo(X)$. We denote $\Sigma:=\{\frac{1}{m}\colon m \in \nn\}$ and $\Sigma_0:=\Sigma \cup \{0\}$. In this section we investigate the spectra $\ptsp$, $\stsp$, and $\spec$ in case $\kozero$ is nuclear (equivalently, $\kotinf$ is nuclear).

\begin{proposition}\label{nuclear equivalent invertible}
	Let $A$ be a Köthe matrix satisfying (G$\infty$-1). Then, the following statements are equivalent:
	\begin{enumerate}[label=\normalfont(\arabic*)]
		\item $0 \notin \spec$.
		\item $A$ satisfies condition \eqref{nuclearity criterion}.
\end{enumerate}
\end{proposition}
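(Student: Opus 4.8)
The plan is to read $0\notin\spec$ as the assertion that $-\ces$ (equivalently $\ces$) is invertible in $\clo(\kozero)$. Since $\ces$ is a bijection of $\cnn$ whose inverse $\ices$ is given by \eqref{ices}, and $\kozero\subseteq\cnn$, this is the same as demanding that \emph{both} $\ces$ and $\ices$ restrict to continuous operators on $\kozero$ (and then $\ces\ices=\ices\ces=I$ makes $-\ces$ invertible). By Lemma~\ref{inductive limit continuity}(1) each of these continuity questions reduces to a Banach-space estimate between the steps $c_0(v_n)$, so the whole proposition becomes a comparison of the operator norm of $\ices$ on the steps with the quantity in \eqref{nuclearity criterion}. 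Throughout I will use that (G$\infty$-1) makes each $v_n=(v_n(i))_i$ nonincreasing in $i$, which is what lets me dominate the partial sums in $\ces$ and the shifted terms in $\ices$.

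For (1) $\Rightarrow$ (2) I would extract from $\ices\in\clo(\kozero)$, via Lemma~\ref{inductive limit continuity}(1), for each $n$ an index $m$ (which we may take $>n$) and a constant $K>0$ with $\ices\colon c_0(v_n)\to c_0(v_m)$ bounded by $K$. The decisive computation is on the unit vectors $e_i$: formula \eqref{ices} gives $\ices e_i=i(e_i-e_{i+1})$, whence $q_n(e_i)=v_n(i)$ and, using $v_m(i)\ge v_m(i+1)$, $q_m(\ices e_i)=iv_m(i)$. Boundedness then reads $iv_m(i)\le Kv_n(i)$ for every $i$, that is $\supi iv_m(i)/v_n(i)\le K<\infty$, which is exactly \eqref{nuclearity criterion}. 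The point is that the $e_i$ \emph{saturate} the estimate, so \eqref{nuclearity criterion} is forced rather than merely sufficient.

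For (2) $\Rightarrow$ (1) I assume \eqref{nuclearity criterion}. Continuity of $\ces$ comes from the same circle of ideas: \eqref{nuclearity criterion} gives $v_m(i)/v_n(i)\le K/i\igoes 0$, and since $\sumji 1/v_n(j)\le i/v_n(i)$ by monotonicity one obtains $q_m(\ces x)\le q_n(x)\,\supi v_m(i)/v_n(i)\le K\,q_n(x)$ with $\ces x\in c_0(v_m)$, so $\ces\in\clo(\kozero)$ by Lemma~\ref{inductive limit continuity}(1) (equivalently, \eqref{nuclearity criterion} forces \eqref{condition S}, so $\kotinf$ is Schwartz and one may invoke the corollary to Proposition~\ref{continuity}). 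For $\ices$ I would estimate, for $y\in c_0(v_n)$,
\[
v_m(i)\,|(\ices y)_i| \le \frac{iv_m(i)}{v_n(i)}\,v_n(i)|y_i| + \frac{(i-1)v_m(i)}{v_n(i-1)}\,v_n(i-1)|y_{i-1}|,
\]
and bound both fractions by $K$ using \eqref{nuclearity criterion} together with $v_m(i)\le v_m(i-1)$. This yields $q_m(\ices y)\le 2K\,q_n(y)$, and since $v_n(i)|y_i|\to 0$ the right-hand side tends to $0$, so $\ices y\in c_0(v_m)$. Lemma~\ref{inductive limit continuity}(1) then gives $\ices\in\clo(\kozero)$, and hence $-\ces$ is invertible in $\clo(\kozero)$, i.e.\ $0\notin\spec$.

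The main obstacle I anticipate is not any single estimate but the bookkeeping that makes the upper and lower bounds match: one must see that the norm of $\ices$ between the steps is comparable, up to the harmless factor $2$, to $\supi iv_m(i)/v_n(i)$, and that the unit vectors realize this supremum. The two subtleties are (i) passing from $\ell_\infty(v_m)$ to the smaller space $c_0(v_m)$, which needs the decay $v_n(i)|y_i|\to 0$ and not mere boundedness, and (ii) handling the shifted index $i-1$ in \eqref{ices}, where the monotonicity supplied by (G$\infty$-1) is exactly what absorbs the term $v_m(i)/v_n(i-1)$ into $K$.
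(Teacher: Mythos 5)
Your argument is correct and follows essentially the same route as the paper: both identify $0\notin\spec$ with the continuity of $\ices$ on $\kozero$, reduce via Lemma~\ref{inductive limit continuity} to step-wise continuity $c_0(v_n)\to c_0(v_m)$, and then match this against \eqref{nuclearity criterion} (the paper delegates that last computation to an external reference, whereas you carry it out explicitly by testing on the unit vectors $e_i$ and using the monotonicity from (G$\infty$-1)). The only difference is one of completeness, not of method.
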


\begin{proof}
	$0 \notin \spec$ if and only if $\ices \colon \kozero \to \kozero$ is continuous if and only if for all $n \in \nn$ there exists $m>n$ such that $\ices \colon c_0(v_n) \to c_0(v_m)$ is continuous. Hence the proof proceeds as in \cite[Proposition 10]{Kiz18}.
\end{proof}

\begin{lemma}\textup{\cite[Proposition 2.6]{ABRwc0}}\label{1/s in ptsp}
	Let $A$ be a Köthe matrix satisfying (G$\infty$-1), and let $\kotinf$ be Schwartz. Then for $s \in \nn$ and for the Cesàro operator $\ces$, the following statements are equivalent:
	\begin{enumerate}[label=\normalfont(\arabic*)]
		\item $\frac{1}{s+1} \in \ptsp$.
		\item There exists $n \in \nn$ such that
		$ \limii i^{s}v_n(i)=0$.
	\end{enumerate}
\end{lemma}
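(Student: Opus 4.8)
The plan is to characterize membership of $\frac{1}{s+1}$ in the point spectrum by computing the eigenvector explicitly. By definition, $\frac{1}{s+1}\in\ptsp$ precisely when the equation $\ces x=\frac{1}{s+1}x$ admits a nonzero solution $x\in\kozero$; since (G$\infty$-1) together with the Schwartz hypothesis guarantees $\ces\in\clo(\kozero)$, this question is well posed. First I would solve the eigenvalue equation on all of $\cnn$ and then decide for which weights the resulting eigenvector lands in $\kozero$.

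To solve $\ces x=\lambda x$ I write it coordinatewise as $\sumji x_j=\lambda i\,x_i$. Introducing the partial sums $S_i:=\sumji x_j$ and using $x_i=S_i-S_{i-1}$ (with $S_0:=0$) converts the equation into the first-order recurrence $(1-\lambda i)S_i=-\lambda i\,S_{i-1}$. With $\lambda=\frac{1}{s+1}$ the coefficient $1-\lambda i$ vanishes exactly at $i=s+1$. The case $i=1$ gives $x_1=\lambda x_1$, forcing $x_1=0$ (as $\lambda\neq 1$), hence $S_1=0$; for $2\leq i\leq s$ the recurrence propagates $S_2=\dots=S_s=0$. At $i=s+1$ both sides vanish identically, so $S_{s+1}$ is a free parameter $c$. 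For $i\geq s+2$ the recurrence $S_i=\frac{i}{i-(s+1)}S_{i-1}$ determines everything, and a telescoping product together with $x_i=S_i-S_{i-1}$ yields
\[
	x_i=c\binom{i-1}{s},\qquad i\in\nn,
\]
so the eigenspace is one-dimensional, spanned by $x^{(s)}:=\bigl(\binom{i-1}{s}\bigr)_i$.

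It remains to decide when $x^{(s)}$ lies in $\kozero=\ind_n c_0(v_n)$. By the definition of the inductive limit, $x^{(s)}\in\kozero$ if and only if $x^{(s)}\in c_0(v_n)$ for some $n\in\nn$, that is, $\limii v_n(i)\binom{i-1}{s}=0$. Since $\binom{i-1}{s}=\frac{1}{s!}(i-1)(i-2)\cdots(i-s)$ is a polynomial in $i$ of degree $s$ with leading term $\frac{i^s}{s!}$, this is equivalent to the existence of $n\in\nn$ with $\limii i^{s}v_n(i)=0$. This is exactly the asserted equivalence between (1) and (2).

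The main obstacle is the degenerate step of the recurrence at $i=s+1$, where $1-\lambda i$ vanishes: it is precisely this degeneracy that simultaneously forces the first $s$ partial sums (hence the first $s$ coordinates) to be zero and creates the single free parameter $c$, so that a nontrivial eigenvector exists at all. Once the eigenvector is pinned down, the remainder is the elementary asymptotic $\binom{i-1}{s}\sim i^s/s!$ combined with the meaning of membership in the inductive limit $\kozero$.
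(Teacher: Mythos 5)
Your proof is correct: the paper gives no argument of its own for this lemma (it is quoted from \cite[Proposition~2.6]{ABRwc0}), and your computation — solving $\ces x=\tfrac{1}{s+1}x$ in $\cnn$ via the partial-sum recurrence to get the one-dimensional eigenspace spanned by $\bigl(\binom{i-1}{s}\bigr)_i$, then testing membership in some step $c_0(v_n)$ using $\binom{i-1}{s}\asymp i^s/s!$ — is exactly the standard route that the cited result takes, transported to the inductive limit $\kozero=\bigcup_n c_0(v_n)$. No gaps.
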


\begin{proposition}
Let $A$ be a (G$\infty$-1) Köthe matrix satisfying \eqref{nuclearity criterion}, and let $\kotinf$ be Schwartz. Then, $\Sigma=\ptsp$.	
\end{proposition}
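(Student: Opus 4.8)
The plan is to establish the two inclusions $\ptsp \subseteq \Sigma$ and $\Sigma \subseteq \ptsp$ separately. For the first, I observe that every eigenvector of $\ces$ lies in $\kozero \subseteq \cnn$, so it suffices to solve $\ces x = \lambda x$ inside $\cnn$. The formula \eqref{ices} for $\ices$ shows that $\ces$ is invertible, hence injective, on $\cnn$, so $0 \notin \ptsp$. For $\lambda \neq 0$ I would read the equation $\frac{1}{i}\sumji x_j = \lambda x_i$ off coordinate by coordinate: at $i=1$ it forces $(\lambda-1)x_1 = 0$, and an immediate induction shows that whenever $x_1 = \cdots = x_{k-1} = 0$ one gets $(\lambda - \tfrac1k)x_k = 0$. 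Hence a nonzero solution can only occur for $\lambda = \tfrac1m$ with $m \in \nn$, giving $\ptsp \subseteq \Sigma$. This part uses neither the Schwartz property nor \eqref{nuclearity criterion}.

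For the reverse inclusion, fix $m \in \nn$ and set $s = m-1$, so that $\tfrac1m = \tfrac{1}{s+1}$. By Lemma~\ref{1/s in ptsp} (whose hypotheses (G$\infty$-1) and Schwartzness of $\kotinf$ are in force) it is enough to exhibit $n \in \nn$ with $\limii i^{s}v_n(i) = 0$. Conceptually this is precisely the statement that the eigenvector attached to $\tfrac1m$, which grows polynomially of degree $m-1$, can be absorbed by some weight $v_n$; that the weights decay faster than every polynomial is the content of nuclearity, and for a full $\ginf$-matrix one could simply invoke Proposition~\ref{nuclear rapidly decreasing}. Since only (G$\infty$-1) and \eqref{nuclearity criterion} are assumed here, I would instead produce the required index directly.

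The key step is to upgrade the single factor of $i$ supplied by \eqref{nuclearity criterion} to an arbitrary power. Using \eqref{nuclearity criterion} repeatedly I would choose $n_0 < n_1 < n_2 < \cdots$ and constants $C_j$ with $i\,v_{n_{j+1}}(i) \leq C_j\, v_{n_j}(i)$ for all $i$; multiplying these telescopically gives $i^{j} v_{n_j}(i) \leq D_j\, v_{n_0}(i)$ with $D_j = \prod_{l<j}C_l$. Because (G$\infty$-1) forces $a_n(i)\geq 1$, hence $v_n(i)\leq 1$, the right-hand side is bounded, so $i^{s}v_{n_s}(i)$ is bounded; feeding in one more step turns this bound into decay, $i^{s}v_{n_{s+1}}(i) \leq D_{s+1}/i \to 0$. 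Taking $n = n_{s+1}$ then yields $\limii i^{s}v_n(i)=0$, and Lemma~\ref{1/s in ptsp} delivers $\tfrac1m \in \ptsp$. As $m$ was arbitrary, $\Sigma \subseteq \ptsp$, completing the proof.

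I expect the only genuine obstacle to be this bootstrapping: \eqref{nuclearity criterion} gains one power of $i$ at a time, whereas the eigenvector for $\tfrac1m$ demands a gain of $i^{m-1}$, so the powers must be accumulated by composing the condition along an increasing chain of steps, with the uniform bound $v_n\leq 1$ converting boundedness into the final decay. A harmless but worth-noting point is that the witnessing index $n$ depends on $m$, which causes no difficulty since $\kozero = \ind_n c_0(v_n)$ is an inductive limit and an eigenvector need only land in a single step. Dropping \eqref{nuclearity criterion} is exactly what would prevent all of $\Sigma$ from being attained, which is the source of the nuclear/non-nuclear dichotomy announced in the abstract.
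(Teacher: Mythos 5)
Your proof is correct and follows essentially the same route as the paper: the inclusion $\ptsp\subseteq\Sigma$ comes from the point spectrum of $\ces$ on $\cnn$, and the reverse inclusion is obtained by iterating condition \eqref{nuclearity criterion} along an increasing chain of indices to accumulate powers of $i$ before invoking Lemma~\ref{1/s in ptsp}. The only (harmless) variation is that you convert boundedness into decay via $v_n(i)\leq 1$ from (G$\infty$-1) plus one extra application of \eqref{nuclearity criterion}, whereas the paper seeds the induction with $v_n(i)\to 0$ from the Schwartz condition; your explicit remark that the witnessing index depends on $s$ is in fact a more careful phrasing than the paper's.
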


\begin{proof}
	We clearly have $\ptsp \subseteq \sigma_{pt}(\ces,\cnn)=\Sigma$. Now we prove that there exists $n \in \nn$ such that for all $s\in \nn$ we have $i^s v_n(i)\igoes 0$, by induction over $s$. Since $\kotinf$ is Schwartz, we may choose $n \in \nn$ as in condition \eqref{tends to infinity} so that $v_n(i) \igoes 0$. Then, by assumption there exist $m_1>n$ and $M>0$ such that $iv_{m_1}(i) \leq Mv_n(i)$ and hence $iv_{m_1}(i) \igoes 0$. Suppose that $i^s v_{m_1}(i)\igoes 0$, for $s=1,\dots, r$. Then, there exist $m_2>m_1$ and $\tilde{M}$ satisfying
	\[
	i^{r+1}v_{m_2}(i)=i^r iv_{m_2}(i) \leq \tilde{M}i^r v_{m_1}(i) \igoes 0.
	\]
	That implies for some $n \in \nn$, we have $i^s v_n(i)\igoes 0$, for all $s\in \nn$. Equivalently, by Lemma~\ref{1/s in ptsp},  $\frac{1}{s+1} \in \ptsp$, for all $s \in \nn$. Therefore $\Sigma=\ptsp$.
\end{proof}

	\begin{theorem}\label{main equivalences 2}
		Let $\kotinf$ be a $\ginf$-space which is Schwartz. Then, the following statements are equivalent:
		\begin{enumerate}[label=\normalfont(\arabic*)]
			\item $0 \notin \spec$.
			\item $\frac{1}{2} \in \ptsp$.
			\item There exists $s \in \nn$ such that $\frac{1}{s} \in \ptsp$.
			\item $\Sigma=\ptsp$.
		\end{enumerate}
	\end{theorem}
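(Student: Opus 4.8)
The plan is to convert each of the four statements into an equivalent decay condition on the weights $(v_n(i))_i$, using the dictionary already assembled in this section, and then to close the cycle $(1)\Rightarrow(4)\Rightarrow(2)\Rightarrow(3)\Rightarrow(1)$. The guiding idea is that ``$0\notin\spec$'' records nuclearity of $\kozero$, while ``$\tfrac1s\in\ptsp$'' records how fast some weight $v_n(i)$ decays against a power of $i$; Proposition~\ref{nuclear rapidly decreasing} is exactly the bridge that upgrades a single polynomial-decay statement into nuclearity.

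First I would establish the core equivalence $(1)\Leftrightarrow(2)$ as a chain of results already at hand. By Proposition~\ref{nuclear equivalent invertible}, $0\notin\spec$ holds if and only if $A$ satisfies \eqref{nuclearity criterion}, which by Proposition~\ref{characterization nuclearity} is equivalent to nuclearity of $\kozero$. Taking $\alpha=1$ in Proposition~\ref{nuclear rapidly decreasing}, nuclearity is in turn equivalent to the existence of $n\in\nn$ with $\limii i\,v_n(i)=0$, and by Lemma~\ref{1/s in ptsp} (with $s=1$) this last condition is precisely $\tfrac12\in\ptsp$. Reading the chain off gives $(1)\Leftrightarrow(2)$.

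The implication $(2)\Rightarrow(3)$ is immediate, taking $s=2$. For the converse $(3)\Rightarrow(1)$, suppose $\tfrac1s\in\ptsp$. Writing $\tfrac1s=\tfrac1{(s-1)+1}$, Lemma~\ref{1/s in ptsp} furnishes $n\in\nn$ with $\limii i^{s-1}v_n(i)=0$. For $s\geq2$ the exponent $\alpha:=s-1$ satisfies $\alpha\geq1$, so Proposition~\ref{nuclear rapidly decreasing} applies and yields nuclearity of $\kozero$, hence \eqref{nuclearity criterion}, hence $(1)$ via Proposition~\ref{nuclear equivalent invertible}. This back-translation is where I expect the real work to sit: Proposition~\ref{nuclear rapidly decreasing} genuinely requires $\alpha\geq1$, so the implication needs a nontrivial eigenvalue $\tfrac1s$ with $s\geq2$. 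The borderline value $s=1$ corresponds to $\alpha=0$ and must be treated apart, since the eigenvalue $1$ has the constant eigenvector, which lies in $\kozero$ for every Schwartz space; thus $1\in\ptsp$ automatically and carries no nuclearity information, and the content of $(3)$ resides in the points $\tfrac12,\tfrac13,\dots$. As $(2)\Rightarrow(3)$ is realized by $s=2$, the cycle stays consistent.

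Finally I would close the loop. Since $(1)$ is equivalent to \eqref{nuclearity criterion}, the Proposition immediately preceding this theorem (which gives $\Sigma=\ptsp$ under \eqref{nuclearity criterion} together with the Schwartz hypothesis) shows that $(1)$ implies $(4)$; and $(4)\Rightarrow(2)$ is trivial because $\tfrac12\in\Sigma$. Combined with $(1)\Leftrightarrow(2)$ and $(2)\Rightarrow(3)\Rightarrow(1)$, this yields the equivalence of all four statements.
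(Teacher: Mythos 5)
Your proof is correct and reaches all four equivalences, but it takes a genuinely different route from the paper's. The paper works directly with the weights: for (1)$\Rightarrow$(2) it combines \eqref{nuclearity criterion} with \eqref{vn summable} to get $(iv_m(i))_i\in\ell_1$ and hence $iv_m(i)\igoes 0$; for (2)$\Rightarrow$(1) it applies (G$\infty$-2) once to get $iv_m(i)/v_n(i)\leq Miv_n(i)\igoes 0$; and for (3)$\Rightarrow$(4) it runs a self-contained induction proving $i^{2^k}v_n(i)\igoes 0$ for all $k$, again via (G$\infty$-2), never returning to nuclearity. You instead delegate every weight estimate to Propositions~\ref{characterization nuclearity} and~\ref{nuclear rapidly decreasing}, reading (1), (2) and (3) all as reformulations of nuclearity, and you obtain (4) from (1) via the proposition immediately preceding the theorem rather than from (3) by induction. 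The two routes rest on the same mechanism --- iterating (G$\infty$-2) to upgrade decay against one power of $i$ to decay against all powers --- so neither is more elementary; yours is shorter on the page only because that iteration is hidden inside the cited propositions, and it buys a cleaner conceptual summary (``every item encodes nuclearity'') at the cost of an extra dependency.

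Two remarks. First, your appeal to Proposition~\ref{nuclear rapidly decreasing} with $\alpha=1$ is within its stated hypotheses, but the paper's proof of the implication (3)$\Rightarrow$(1) of that proposition is written for $\alpha>1$, since it concludes with $(i^{-\alpha})_i\in\ell_1$; for $\alpha=1$ one further application of (G$\infty$-2) (giving $v_p/v_n\leq C v_n^3$) is needed. This is easily patched, but if you lean on the $\alpha=1$ case you should say so. Second, your observation that $1\in\ptsp$ holds automatically for every Schwartz $\ginf$-space (the constant sequence lies in $\kozero$ by \eqref{tends to infinity}) is correct and shows that statement (3) must be read with $s\geq 2$ for the theorem to be true; the paper's own proof of (3)$\Rightarrow$(4) makes the same tacit restriction when it invokes Lemma~\ref{1/s in ptsp}, so you are not introducing a new gap, and flagging the $s=1$ case explicitly is an improvement rather than a defect.
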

	
	\begin{proof}
		(1) $\Rightarrow$ (2) Proposition~\ref{characterization nuclearity} and Proposition~\ref{nuclear equivalent invertible} yield $\kozero$ is nuclear. Then, by Proposition~\ref{main equivalences 1} we may take $n \in \nn$ as in \eqref{vn summable}, so that $(v_n(i))_i \in \ell_1$. We may also pick $m>n$ and $M>0$ as in \eqref{nuclearity criterion} so that $iv_m(i) \leq Mv_n(i)$, for all $i \in \nn$. Then, we have $(iv_m(i))_i \in \ell_1$. This implies $iv_m(i) \igoes 0$. This is equivalent to (2) by Lemma~\ref{1/s in ptsp}.
		
		(2) $\Rightarrow$ (1) Since $\frac{1}{2} \in \ptsp$, by Lemma~\ref{1/s in ptsp} there is an $n \in \nn$ such that $iv_n(i) \igoes 0$. If we select $m>n$ and $M>0$ as in (G$\infty$-2), we obtain
		\[
		\frac{iv_m(i)}{v_n(i)}\leq Miv_n(i) \igoes 0.
		\]
		This is equivalent to (1) by Proposition~\ref{nuclear equivalent invertible}.
		
		(2) $\Rightarrow$ (3) Clear.
		
		(3) $\Rightarrow$ (4) By Lemma~\ref{1/s in ptsp} we have an $s\in \nn$ with $i^sv_n(i) \igoes 0$ for some $n \in \nn$. Then, clearly $iv_n(i) \igoes 0$ as well. Now let us prove by induction that $i^{2^k}v_n(i) \igoes 0$ for all $k \in \nn$. For $k=0$, it is already satisfied. Suppose $i^{2^k}v_n(i) \igoes 0$, for $k=1,\dots,r$. Then, for some $i_0 \in \nn$, we have $|i^{2^r}v_n(i)|<1$ for all $i\geq i_0$. For $m>n$ and $C>0$ selected as in (G$\infty$-2) and for all $i \geq i_0$,
		\[
			i^{2^{r+1}}v_m(i)=(i^{2^r}\sqrt{v_m(i)})^2 \leq C(i^{2^r} v_n(i))^2 \leq Ci^{2^r}v_n(i) \igoes 0.
		\]
		Therefore $\frac{1}{s} \in \ptsp$ for all $s \in \nn$. Hence, $\ptsp=\Sigma$.
		
		(4) $\Rightarrow$ (2) Trivial.
	\end{proof}
The following example illustrates why assumption (G$\infty$-2) in Theorem~\ref{main equivalences 2} cannot be removed.
\begin{example}\label{second axiom indispensible}
\normalfont
\begin{enumerate}[wide, label=\normalfont(\roman*), labelwidth=!, labelindent=0pt]
\setlength\itemsep{0.5em}
	\item For a fixed $0<\alpha<1$, and an increasing sequence $(\alpha_n)_n \subset (0,1)$ tending to $\alpha$, let us define a Köthe matrix $A$ by $a_n(i):=i^{\alpha_n}e^i$, where $i,n \in \nn$. The Köthe echelon space $\kotinf$ of order 1 satisfies condition (G$\infty$-1) and condition \eqref{tends to infinity}. Assume that (G$\infty$-2) also holds. Then, given $n=1$ there is $m>1$ and $M>0$ with
\[
i^{2\alpha_1}e^{2i} \leq Mi^{\alpha_n}e^i \quad \Rightarrow \quad i^{\alpha_1}e^i \leq M i^{\alpha_n-\alpha_1},
\]
which is impossible. Hence $A$ is not a $\ginf$-matrix. For $n=1$,
\[
\supi \frac{i^{1+\alpha_1}e^i}{i^{\alpha_m}e^i}=\supi i^{1+\alpha_1-\alpha_m}=\infty, \quad \forall m>1,
\] 
since $1+\alpha_1-\alpha_m>0$, for all $m>1$. So \eqref{nuclearity criterion} is not satisfied. On the other hand, for each $s,n \in \nn$ and for large values of $i \in \nn$, we have $0<i^{s-1}v_n(i)=i^{s-1-\alpha_n}e^{-i} \leq i^{s-1}e^{-i} \igoes 0$. So $(i^{s-1})_i \in k_0(V)$ and by Lemma~\ref{1/s in ptsp} $\frac{1}{s} \in \ptsp$ for each $s \in \nn$. This shows that condition (4) does not imply condition (1) in Theorem~\ref{main equivalences 2}, in general.
	
	\item Fix $s \geq 1$, $s \in \nn$ and define the Köthe matrix $A=(a_n)_n$ by $a_n(i):=i^{s-\frac{1}{1+n}}$. The Köthe echelon space $\kotinf$ of order 1 satisfies (G$\infty$-1) and condition \eqref{tends to infinity}, but it is not a $\ginf$-space. Indeed, assume that (G$\infty$-2) holds. Then for $n=1$, there exist $m>1$ and $M>0$ such that $a_1(i)^2 \leq Ma_m(i)$. So for any $s \geq 1$,
\[
i^{2s-1} \leq M i^{s-\frac{1}{1+m}} \quad \Rightarrow \quad i^s \leq M i^{1-\frac{1}{1+m}}.
\]
But this is impossible since $s \geq 1$. In this case, $(i^{m-1})_i \in k_0(V)$ for $m=1,2, \dots s$ but $(i^s)_i \notin k_0(V)$ since $i^sv_n(i)=i^{\frac{1}{1+n}} \igoes \infty$, for all $n \in \nn$. Thus $\frac{1}{s+1} \notin \ptsp$, which implies $\frac{1}{m}\notin \ptsp$ for each $m>s$. This shows us condition (3) in Theorem~\ref{main equivalences 2} does not imply (4), in general.
\end{enumerate}
\end{example}
Let us define the continuous function $a:\cn\setminus\{0\}\to \rn$ by 
\begin{equation}\label{function a}
a(z):=\operatorname{Re}\left(\frac{1}{z}\right).
\end{equation}
Observe that for all $k \in \nn$, the weighted Banach space $c_0(v_k)$ is isometrically isomorphic to $c_0$ via $\phi_k\colon c_0(v_k)\to c_0$ defined by 
\begin{equation}\label{c0 isom weighted}
\phi_k x:=(v_k(i)x_i)_i, \quad \forall x \in c_0(v_k).
\end{equation}
\begin{proposition}\label{spectrum nuclear case}
	Let $\kotinf$ be a nuclear $\ginf$-space. Then,
	\begin{enumerate}[label=\normalfont(\arabic*)]
		\item $\spec=\ptsp=\Sigma$.
		\item $\stsp=\spec \cup \{0\}=\Sigma_0$.
	\end{enumerate} 	
\end{proposition}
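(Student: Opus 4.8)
The plan is to first pin down $\ptsp=\Sigma$ and dispose of the point $0$, then treat every $\lambda\notin\Sigma_0$ in a single resolvent computation, and finally read off the discrepancy between $\spec$ and $\stsp$ from the fact that $\Sigma$ accumulates precisely at $0$.

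First I would note that nuclearity of $\kotinf$ forces condition \eqref{nuclearity criterion} through Proposition~\ref{characterization nuclearity}, whence $0\notin\spec$ by Proposition~\ref{nuclear equivalent invertible}; equivalently $\ices\in\clo(\kozero)$. Feeding $0\notin\spec$ into Theorem~\ref{main equivalences 2} gives $\ptsp=\Sigma$ at once. Since always $\ptsp\subseteq\spec$, this yields $\Sigma\subseteq\spec$ and reduces part (1) to the inclusion $\spec\subseteq\Sigma$, i.e. to showing that each $\lambda\in\cn\setminus\Sigma_0$ lies in the resolvent set (the case $\lambda=0$ being already settled).

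For $\lambda\notin\Sigma_0$ I would solve $(\lambda I-\ces)x=y$ via the triangular recursion $(i\lambda-1)x_i=iy_i+\sum_{j=1}^{i-1}x_j$, legitimate precisely because $\lambda\neq 1/i$ for every $i$. Solving the induced first-order recursion for the partial sums produces an explicit lower-triangular resolvent matrix $R(\lambda,\ces)=(r_{ij})$ with $r_{ii}=i/(i\lambda-1)$ and, for $j<i$,
\[
	r_{ij}=\frac{j}{(i\lambda-1)(j\lambda-1)}\prod_{k=j+1}^{i-1}\frac{k\lambda}{k\lambda-1}.
\]
The decisive feature is that the product is of order $i^{a(\lambda)}$ with $a(\lambda)=\operatorname{Re}(1/\lambda)$ as in \eqref{function a}, so the entries grow only polynomially in $i$. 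To show $R(\lambda,\ces)\in\clo(\kozero)$ I would, for each $n$, invoke the strong nuclearity estimate \eqref{strong nuclearity criterion} to choose $m>n$ with an exponent $\alpha$ large enough to dominate this growth, and bound
\[
	q_m(R(\lambda,\ces)y)\leq\left(\supi v_m(i)\sumji\frac{|r_{ij}|}{v_n(j)}\right)q_n(y),
\]
the bracketed supremum being finite because \eqref{strong nuclearity criterion}, together with (G$\infty$-1), absorbs the polynomial factors in $i$. By Lemma~\ref{inductive limit continuity}(1) this gives continuity on $\kozero$, so $\lambda\in\rho(\ces;\kozero)$, completing $\spec=\Sigma$ and thereby part (1).

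For part (2) I would use that $\rho^*\subseteq\rho$ forces $\spec\subseteq\stsp$, and that $0$ is an accumulation point of $\Sigma=\spec$: since $1/m\to 0$ with each $1/m\in\spec$, no disk $B(0,\delta)$ can lie in $\rho$, so $0\notin\rho^*$, i.e. $0\in\stsp$; hence $\spec\cup\{0\}\subseteq\stsp$. Conversely, for $\lambda\notin\Sigma_0$ the set $\Sigma$ accumulates only at $0$, so there is $\delta>0$ with $\clamball\cap\Sigma_0=\varnothing$, whence $\clamball\subseteq\rho$; running the estimate above uniformly over $\mu\in\lamball$—the bounds on $|r_{ij}(\mu)|$ staying uniform because $a(\mu)$ and the factors $(k\mu-1)^{-1}$ remain bounded on the compact disk—yields equicontinuity of $\{R(\mu,\ces):\mu\in\lamball\}$, so $\lambda\in\rho^*$. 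Hence $\stsp\subseteq\Sigma_0=\spec\cup\{0\}$, giving the asserted equality. The main obstacle is this single weighted norm estimate for the resolvent: one must match the growth order $a(\lambda)=\operatorname{Re}(1/\lambda)$ of the entries $r_{ij}$ against the weight gap $v_m/v_n$ furnished by \eqref{strong nuclearity criterion}, and then verify the matching can be performed uniformly in $\mu$ across a whole disk to upgrade continuity to the equicontinuity required by $\rho^*$. This is exactly where the strong nuclearity condition \eqref{strong nuclearity criterion}, rather than plain nuclearity \eqref{nuclearity criterion}, is indispensable, and where I would adapt the arguments of \cite{ABR18-2}.
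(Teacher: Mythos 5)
Your proposal is correct and follows essentially the same route as the paper: the explicit lower-triangular resolvent with entries of order $i^{a(\mu)-1}j^{-a(\mu)}$, absorbed via the strong nuclearity condition \eqref{strong nuclearity criterion} uniformly over a closed disk avoiding $\Sigma_0$, is exactly the mechanism of the paper's proof, as is the derivation of $\Sigma=\ptsp\subseteq\spec$ and of $0\in\stsp\setminus\spec$ from the accumulation of $\Sigma$ at $0$. The only (harmless) differences are that you treat all $\lambda\notin\Sigma_0$ in a single computation where the paper splits into the cases $a(\lambda)<1$ (handled by a Banach-space argument on each step $c_0(v_n)$, needing no nuclearity) and $a(\lambda)\geq 1$, and that for continuity on $c_0$ you should also record the (trivial, since $|\tilde{e}_{ij}|\leq MD'_\lambda/i$) column condition $\limii \tilde{e}_{ij}=0$ alongside the row-sum bound.
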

\begin{proof}
	Since $\kozero$ is nuclear, by Theorem~\ref{main equivalences 2}, we know that $\ptsp=\Sigma\subseteq\spec$, and hence
\[
\Sigma_0=\overline{\Sigma} \subseteq \overline{\spec} \subseteq \stsp.
\]
Moreover, Proposition~\ref{nuclear equivalent invertible} yields $0 \notin \spec$. For the other inclusion, we show that for every $\lambda \in \cn\setminus\Sigma_0$ there exists $\delta>0$ such that the inverse operator $(\ces-\mu I)^{-1}\colon \kozero \to \kozero$ is continuous for each $\mu \in \lamball$ and the set $\{(\ces-\mu I)^{-1}\colon\mu \in \lamball\}$ is equicontinuous in $\clo(\kozero)$. Remember that $(\ces-\mu I)^{-1}$ is continuous on $\cnn$ for each $\mu \in \cn\setminus\Sigma$. Fix $\lambda \in \cn\setminus\Sigma_0$. Choose a $\delta_1>0$ such that $\lamball\cap\Sigma_0=\varnothing$. To establish our claim, it suffices to show that there exists $\delta>0$ such that for all $n \in \nn$ there exist $m>n$ and $D_n>0$ satisfying
	\begin{equation}\label{continuity out of sigma}
		q_m((\ces-\mu I)^{-1}x) \leq D_nq_n(x), \quad \forall \mu \in \lamball,\, x \in c_0(v_n).
	\end{equation}
	Now we separate in two cases.
\begin{enumerate}[wide, label=(\roman*), labelwidth=!, labelindent=0pt]
\setlength\itemsep{0.5em}	
	\item $a(\lambda)<1$ (equivalently, $|\lambda-\frac{1}{2}|>\frac{1}{2})$: Fix $n\in \nn$. Since $a(\lambda)<1$ we may pick $\varepsilon>0$ such that $a(\lambda)<1-\varepsilon$. By continuity of $a$, there exists $\delta_2>0$ such that $a(\mu)<1-\varepsilon$, for all $\mu \in B(\lambda,\delta_2)$. By \cite[Lemma 2.8]{ABR18-2} , there exist $\delta\in (0,\delta_2)$ and $M_{n,\lambda}$ such that \eqref{continuity out of sigma} is satisfied:
	\[
		q_n((\ces-\mu I)^{-1}x) \leq \frac{M_{n,\lambda}}{1-a(\mu)}q_n(x) \leq\frac{M_{n,\lambda}}{\varepsilon}q_n(x),\quad \forall \mu \in \overline{\lamball},\, x \in c_0(v_n).
	\]
	\item $a(\lambda)\geq 1$ (equivalently, $|\lambda-\frac{1}{2}|\leq \frac{1}{2})$: Let us recall the formula for the operator $(\ces-\mu I)^{-1}\colon\cnn \to \cnn$ whenever $\mu \notin \Sigma_0$. By \cite{Rea85}, the $i$-th row of the matrix for $(\ces-\mu I)^{-1}$ has the entries:
	 \[
		   	\begin{cases}
		   	\displaystyle	\frac{-1}{i \mu^2 \prod_{k=j}^i (1-\frac{1}{k\mu})} & \text{if } 1 \leq j <i \vspace{0.25cm}\\
		   	\displaystyle	\frac{1}{\frac{1}{i}-\mu} & \text{if } i=j \vspace{0.25cm}\\
		   		0 & \text{otherwise}.
		   	\end{cases}
		   \]
 For $D_\mu=(d_{ij})_{i,j}$ and $E_\mu=(e_{ij})_{i,j}$, one may formulate $(\ces-\mu I)^{-1}=D_\mu-\frac{1}{\mu^2}E_\mu$, where $	d_{ij}=\frac{1}{\frac{1}{i}-\mu}$, for $i=j$ otherwise $d_{ij}=0$; and $e_{ij}=\frac{1}{i\prod_{k=j} (1-\frac{1}{k\mu})}$, for $2 \leq j <i$ otherwise $e_{ij}=0$. Define $d_0(\lambda):=\operatorname{dist}(\lamball,\Sigma_0)>0$. We have $|d_{ii}|<\frac{1}{d_0(\lambda)}$, for all $\mu \in \overline{B(\lambda,\delta_1)}$ and $i\in \nn$. Fix $n \in \nn$. Then for every $x \in c_0(v_n)$ and $\mu \in \overline{B(\lambda,\delta_1)}$
\[
q_n(D_\mu(x))=\sumi|d_{ii}(\mu)x_i|v_n(i) \leq \frac{1}{d_0(\lambda)}\sumi |x_i|v_n(i)=\frac{1}{d_0(\lambda)}q_n(x).
\]
So $\{D_\mu:\mu\in \overline{B(\lambda,\delta_1)}\}\subseteq\clo(c_0(v_m))$. Then, it remains to show that $E_\mu\colon \kozero \to \kozero$ is continuous for all $\mu \in \clamball$ for some $\delta>0$. So by \eqref{c0 isom weighted}, it suffices to show that for all $n \in \nn$ there exist $m\geq n$ and $D_n>0$ such that
\begin{equation}
	\|\phi_m \circ E_\mu \circ \phi_n^{-1}x\|_0 \leq D_n \|x\|_0, \quad \forall x \in c_0,\, \mu \in \overline{B(\lambda,\delta_1)}.
\end{equation}
where $\|\cdot\|_0$ is the usual $c_0$-norm. For each $n,m$ define $\tilde{E}_{\mu,n,m}:=\phi_m\circ E_\mu \circ \phi_n^{-1}\in \clo(\cnn)$ for $\mu \in \cnn\setminus\{0\}$. Fix $n\in \nn$. For each $m\geq n$ the operator $\tilde{E}_{\mu,m,n}$ for $\mu \in B(\lambda,\delta_1)$ is the restriction to $c_0$ of
\[
\tilde{E}_{\mu,m,n}(x)=(\tilde{E}_{\mu,m,n}(x))=\left(v_m(i)\sumjim\frac{e_{ij}(\mu)}{v_n(j)}x_j\right), \quad x \in \cnn,
\]
with $(\tilde{E}_{\mu,m,n})_1:=0$. $\tilde{E}_{\mu,m,n}=(\tilde{e}_{ij}^{nm}(\mu))$ is given by $\tilde{e}_{1j}^{nm}=0$, $\tilde{e}_{ij}^{nm}=\frac{v_m(i)}{v_n(j)}e_{ij}(\mu)$ for $i\geq 2$ and $1 \leq j<i$. So it suffices to verify, for some $m\geq n$ and $\delta>0$ one has $\tilde{E}_{\mu,m,n}\in \clo(c_0)$ for $\mu \in \lamball$, and $\{\tilde{E}_{\mu,m,n}\colon \mu \in \lamball\}$ is equicontinuous in $\clo(c_0)$. To prove this, we observe \cite[Lemma 2.7]{ABR18-2} implies that for every $m\geq n$, and all $\mu \in \overline{B(\lambda,\delta_2)}$ that
\begin{equation}\label{nuclear entry inequality}
	|\tilde{e}_{ij}^{nm}(\mu)|=\frac{v_m(i)}{v_n(j)}|e_{ij}(\mu)| \leq D'_\lambda\frac{i^{a(\mu)-1}v_m(i)}{j^{a(\mu)}v_n(j)},
\end{equation}
for some $D'_\lambda>0$ and $\delta_2 \in (0,\delta_1)$. Since $a$ is continuous, there exists $\delta\in (0,\delta_2)$ such that $a(\lambda)-\frac{1}{2}<a(\mu)<a(\lambda)+\frac{1}{2}$ for all $\mu \in \overline{\lamball}$. Then $a(\mu)>a(\lambda)-\frac{1}{2}\geq \frac{1}{2}$. By picking $m>n$ and $M>0$ as in \eqref{strong nuclearity criterion}, for any $\mu \in \clamball$ we have
\begin{equation}\label{nuclear condition 1}
|\tilde{e}_{ij}^{nm}(\mu)| \leq D'_\lambda \frac{v_m(i)}{v_n(j)}\frac{i^{a(\mu)-1}}{j^{a(\mu)}} \leq \frac{D'_\lambda}{j^{a(\mu)}} \frac{i^{a(\mu)}v_m(i)}{iv_n(i)} \leq \frac{MD'_\lambda}{i} \igoes 0,
\end{equation}
Moreover, employing \eqref{nuclear entry inequality} and (G$\infty$-1), respectively, we obtain
\begin{align}\label{nuclear condition 2}
	\supi\sumj|\tilde{e}_{ij}^{nm}(\mu)| &\leq \supi D'_\lambda i^{a(\mu)-1}\sumjim\frac{v_m(i)}{j^{a(\mu)}v_n(j)}\leq\supi D'_\lambda i^{a(\mu)-1}v_m(i)\frac{i-1}{v_n(i)} \nonumber\\
	& \leq\supi D'_\lambda \frac{i^{a(\mu)}v_m(i)}{v_n(i)} < \infty,
\end{align}
for every $\mu \in \clamball$. Hence, \cite[Lemma 2.1]{ABRwc0} implies that satisfying both \eqref{nuclear condition 1} and \eqref{nuclear condition 2} yields  $\tilde{E}_{\mu,m,n} \in \clo(c_0)$ for all $\mu \in \clamball$. Moreover, the operator norm is given by $\|\tilde{E}_{\mu,m,n}\| = \supi \sumji |\tilde{e}_{ij}^{nm}(\mu)|$, and we have shown that there exists $C>0$ such that $\|\tilde{E}_{\mu,m,n}\| \leq CD'_\lambda$, for all $\mu \in \clamball$. This implies $\{\tilde{E}_{\mu,m,n}\colon \mu \in \clamball\}$ is equicontinuous in $\clo(c_0)$.
\end{enumerate}
\end{proof}

\begin{corollary}
	Let $\kotinf$ be a nuclear $\ginf$-space. Then $\ces \in \clo(\kozero)$ is neither compact nor weakly compact.
\end{corollary}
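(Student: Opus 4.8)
The plan is to reduce the whole statement to a single failure of boundedness. Since every relatively compact (resp. relatively weakly compact) subset of a locally convex space is bounded, an operator that is compact or weakly compact is in particular \emph{bounded}: if $\ces(U)$ is relatively (weakly) compact for some neighbourhood $U$ of the origin, then $\ces(U)$ is bounded. Hence it suffices to prove that $\ces$ is not bounded on $\kozero$. Recall that $\ces$ is continuous on $\kozero$ (this was established after Proposition~\ref{continuity} from (G$\infty$-1) and the Schwartz property), and that $\kozero$, being nuclear, is Montel and therefore a regular (LB)-space. Thus Lemma~\ref{inductive limit continuity}(2) applies: $\ces$ is bounded on $\kozero$ if and only if there exists $N \in \nn$ such that for every $m \in \nn$ one has $\ces(c_0(v_m)) \subseteq c_0(v_N)$ with $\ces \colon c_0(v_m) \to c_0(v_N)$ continuous, that is,
\[
\supi \frac{v_N(i)}{i}\sumji \frac{1}{v_m(j)} < \infty.
\]

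Next I would show that no fixed target index $N$ can satisfy this. Fix $N \in \nn$. Since $\kozero$ is nuclear, Proposition~\ref{characterization nuclearity} provides condition \eqref{strong nuclearity criterion}; applying it with $\alpha = 2$ and $n = N$ yields $m > N$ and $C>0$ with $i^2 v_m(i) \leq C v_N(i)$ for all $i \in \nn$, equivalently $\frac{v_N(i)}{v_m(i)} \geq \frac{i^2}{C}$. Retaining only the diagonal term $j=i$ of the (nonnegative) inner sum then gives, for this particular $m$,
\[
\supi \frac{v_N(i)}{i}\sumji \frac{1}{v_m(j)} \geq \supi \frac{v_N(i)}{i\, v_m(i)} \geq \supi \frac{i}{C} = \infty.
\]
Hence $\ces \colon c_0(v_m) \to c_0(v_N)$ is not continuous (indeed $\ces$ fails even to map $c_0(v_m)$ into $c_0(v_N)$), so the target index $N$ violates the boundedness criterion. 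As $N$ was arbitrary, Lemma~\ref{inductive limit continuity}(2) shows that $\ces$ is not bounded, and therefore neither compact nor weakly compact.

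The main obstacle is selecting the right quantitative input. The crude diagonal estimate $\sumji \frac{1}{v_m(j)} \geq \frac{1}{v_m(i)}$, when fed only the plain bound \eqref{nuclearity criterion} (the case $\alpha=1$), produces merely a positive \emph{constant} lower bound $1/C$, which does not force the supremum to be infinite. The decisive point is that nuclearity is genuinely stronger, delivering the polynomial gain \eqref{strong nuclearity criterion} for \emph{every} $\alpha$; taking any $\alpha>1$ converts the damping factor $1/i$ into an unbounded factor $i^{\alpha-1}/C$. I would also be careful to invoke Lemma~\ref{inductive limit continuity}(2) in the correct direction: the target step $N$ must be held fixed while the source step $m$ is allowed to grow, since this is precisely where boundedness of an operator on the regular inductive limit $\kozero$ is obstructed.
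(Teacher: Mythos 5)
Your argument is correct, but it is genuinely different from the paper's. The paper exploits the spectral computation already done: since $\kozero$ is Montel, compactness and weak compactness coincide, and a compact operator must have compact spectrum (Edwards, Theorem 9.10.2); this contradicts Proposition~\ref{spectrum nuclear case}, because $\spec=\Sigma$ is not closed in $\cn$ (it accumulates at $0$ but $0\notin\spec$). You instead bypass the spectrum entirely and prove the stronger statement that $\ces$ is not even a \emph{bounded} operator on $\kozero$: relatively (weakly) compact sets are bounded, so it suffices to defeat the criterion of Lemma~\ref{inductive limit continuity}(2), which you do by fixing the target step $N$, invoking \eqref{strong nuclearity criterion} with $\alpha=2$ to produce a source step $m$ with $v_N(i)/v_m(i)\geq i^2/C$, and retaining the diagonal term of the row sum to force $\supi \frac{v_N(i)}{i}\sumji\frac{1}{v_m(j)}=\infty$. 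Your direction of quantifiers in Lemma~\ref{inductive limit continuity}(2) is handled correctly (fixed target, growing source), and your observation that the plain case $\alpha=1$ of \eqref{nuclearity criterion} would only yield a constant lower bound is exactly the right diagnosis of why the full strength of nuclearity is needed. What each approach buys: the paper's proof is two lines once Proposition~\ref{spectrum nuclear case} is available, whereas yours is self-contained modulo Proposition~\ref{characterization nuclearity}, avoids the Edwards spectral theorem and the Montel identification of the two compactness notions, and yields the extra conclusion that $\ces$ is unbounded on $\kozero$ in the nuclear case. One small point to tighten: regularity of $\kozero$ is better justified by noting it is a (DFS)-space (the linking maps $c_0(v_n)\hookrightarrow c_0(v_m)$ are compact by condition \eqref{condition S}), rather than by the bare assertion that Montel (LB)-spaces are regular.
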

\begin{proof}
	Since $\kozero$ is a Montel space, there is no distinction between compactness and weak compactness. So, suppose $\ces$ is compact. Then $\spec$ is necessarily a compact set in $\cn$ \cite[Theorem 9.10.2]{Edw65}. This contradicts Proposition~\ref{spectrum nuclear case}.
\end{proof}

When acting on $\cnn$, the Cesàro matrix $\ces$ is similar to the diagonal matrix $\operatorname{diag}(\frac{1}{i})$. Indeed, the identity $\ces=\Delta\operatorname{diag}(\frac{1}{i})\Delta$ holds in $\clo(\cnn)$, where
\[
\Delta=\Delta^{-1}=(\Delta_{ij})_{i,j \in \nn}=
\begin{cases}
\displaystyle
	(-1)^{j-1}\binom{i-1}{j-1}, &\text{ if } 1 \leq j <i \vspace{0.1cm} \\
	0, & \text{ if } j>i
\end{cases}
\]
and all the three operators $\ces$, $\operatorname{diag}(\frac{1}{i})$, and $\Delta$ are continuous.
\begin{proposition}\label{delta continuous}
	For a $\ginf$-Köthe matrix $A$, and for the operator $\Delta$, the following statements are equivalent:
	\begin{enumerate}[label=\normalfont(\arabic*)]
		\item There exists $n \in \nn$ such that 
		\[
		\supi i^i v_n(i)<\infty. \tag{U} \label{ultranuclear}
		\]
		\item $\Delta \in \clo(\kozero)$.
	\end{enumerate}
\end{proposition}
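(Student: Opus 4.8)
The plan is to run the usual reduction for matrix operators on co-echelon spaces and then to read off \eqref{ultranuclear} from a two-sided estimate of a weighted binomial sum. First I would apply Lemma~\ref{inductive limit continuity}(1): $\Delta\in\clo(\kozero)$ if and only if for every $n\in\nn$ there is $m>n$ with $\Delta\colon c_0(v_n)\to c_0(v_m)$ continuous. Transporting each step to $c_0$ through the isometries \eqref{c0 isom weighted}, the operator $\phi_m\circ\Delta\circ\phi_n^{-1}$ is the lower triangular matrix $\tilde\Delta^{nm}=(\tilde\Delta^{nm}_{ij})$ with $\tilde\Delta^{nm}_{ij}=(-1)^{j-1}\binom{i-1}{j-1}\,v_m(i)/v_n(j)$ for $1\le j\le i$ and $\tilde\Delta^{nm}_{ij}=0$ otherwise. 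By \cite[Lemma 2.1]{ABRwc0}, $\tilde\Delta^{nm}\in\clo(c_0)$ exactly when each column is a null sequence and the row $\ell_1$-norms are uniformly bounded, i.e.\ $\supi v_m(i)\sumji\binom{i-1}{j-1}/v_n(j)<\infty$. Thus the whole statement reduces to controlling this weighted binomial sum.

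For the implication (1)$\Rightarrow$(2), I would assume \eqref{ultranuclear} holds for some $n_0$, so $v_{n_0}(i)\le K\,i^{-i}$; in particular $(v_{n_0}(i))_i\in\ell_1$, so \eqref{vn summable} holds and $\kozero$ is nuclear by Proposition~\ref{main equivalences 1}. Fix $n$. Since (G$\infty$-1) gives $a_n(j)\le a_n(i)$ for $j\le i$ and $\sumji\binom{i-1}{j-1}=2^{i-1}$, the row sum is dominated by $2^{i-1}v_m(i)/v_n(i)$. Now I would choose $m$ via (G$\infty$-2) so that $v_m\le M v_n^2$, routing the squaring through $n_0$ when $n<n_0$ (legitimate, since (G$\infty$-1) forces $v_{n_0}\le v_n$ for $n\le n_0$ and $v_n\le v_{n_0}$ for $n\ge n_0$). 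In either case this bounds the row sum by $2^{i-1}M v_{n_0}(i)\le MK\,2^{i-1}i^{-i}\igoes 0$, because $2^{i-1}i^{-i}\to 0$. The same super‑polynomial decay of $v_m$ makes every column null, so $\Delta\colon c_0(v_n)\to c_0(v_m)$ is continuous for each $n$, whence $\Delta\in\clo(\kozero)$.

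The delicate direction is (2)$\Rightarrow$(1), and I expect it to be the main obstacle. Continuity furnishes, for the index $n$ provided by condition \eqref{tends to infinity} (so that $v_n$ is bounded), some $m$ with $\supi v_m(i)\sumji\binom{i-1}{j-1}/v_n(j)<\infty$. The crude lower bound $\sumji\binom{i-1}{j-1}/v_n(j)\ge 2^{i-1}$ only returns $v_m(i)\le C\,2^{-i}$, that is, exponential (hence merely nuclear) decay, which is strictly weaker than \eqref{ultranuclear}. To recover the factorial-order decay $i^i v_m(i)=O(1)$ one must estimate the weighted binomial sum from below far more sharply — concentrating its mass near the central coefficient $\binom{i-1}{\lfloor i/2\rfloor}\sim 2^{i}/\sqrt{i}$ and feeding the resulting squared weight back through (G$\infty$-2) — so that the bare $2^i$ coming from the binomial expansion is upgraded to the $i^i$ of \eqref{ultranuclear}. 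Producing this lower estimate, and verifying that it genuinely forces \eqref{ultranuclear} rather than only an exponential condition, is the crux on which the equivalence rests; this is where I would expect the $\ginf$-structure (both axioms, not a single step) to be used in full.
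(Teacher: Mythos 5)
Your reduction to the matrices $\phi_m\circ\Delta\circ\phi_n^{-1}$ on $c_0$ and your proof of (1)$\Rightarrow$(2) are fine and essentially match the paper's (the paper routes the row-sum bound through the identity $\binom{i-1}{j-1}=\frac{i^{j-1}}{(j-1)!}(1-\frac1i)\cdots(1-\frac{j-1}{i})$ rather than through $\sumji\binom{i-1}{j-1}=2^{i-1}$, but both reduce to a quantity controlled by $\supi i^iv_n(i)$). The genuine gap is the one you flag yourself: (2)$\Rightarrow$(1) is never proved. You are right that the $\ginf$-axioms only yield the lower bound $\supi\sumji\frac{v_m(i)}{v_n(j)}\binom{i-1}{j-1}\geq\frac{v_m(i)}{v_n(1)}\,2^{i-1}$, hence only exponential decay of $v_m$; but the sharper lower estimate you hope will upgrade $2^{i}$ to $i^{i}$ does not exist, because the implication is false. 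Take $a_n(i):=e^{in}$, the $\ginf$-matrix of Remark~\ref{stronger than nuclearity}, which fails \eqref{ultranuclear} for every $n$. Its row sums are
\[
v_m(i)\sumji\binom{i-1}{j-1}\frac{1}{v_n(j)}=e^{-im}\sumji\binom{i-1}{j-1}e^{jn}=e^{n-im}\left(1+e^{n}\right)^{i-1},
\]
which is bounded (indeed null) in $i$ as soon as $m\geq n+1$, since $1+e^{n}<e^{n+1}$; the columns are clearly null sequences, so $\Delta\in\clo(\kozero)$ by the very criterion you set up, while $\supi i^{i}v_n(i)=\infty$ for all $n$.

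For comparison, the paper's own proof of (2)$\Rightarrow$(1) stumbles at exactly the point you identified: it rewrites the lower bound as $\frac{v_m(i)}{v_n(1)}\,i^{i}s_i$ with $s_i:=\sumji\frac{1}{(j-1)!\,i^{\,i-j+1}}(1-\frac1i)\cdots(1-\frac{j-1}{i})=2^{i-1}i^{-i}$, observes $s_i\igoes 0$, and concludes $\supi i^{i}v_m(i)<\infty$. That is a non sequitur: from $i^{i}v_m(i)s_i\leq S$ with $s_i\to0$ no bound on $i^{i}v_m(i)$ follows, and the display only restates $2^{i-1}v_m(i)\leq Sv_n(1)$. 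So your instinct that this direction is the crux was correct, but the right conclusion is not that a finer binomial estimate is needed; it is that \eqref{ultranuclear} is sufficient but not necessary for $\Delta\in\clo(\kozero)$, and the equivalence as stated should be abandoned or weakened to the exponential-type condition that the row sums themselves impose.
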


\begin{proof}
	For every $k \in \nn$, the surjective isomorphism $\phi_k\colon c_0(v_k) \to c_0$ is defined by \eqref{c0 isom weighted}. Since $\kozero=\ind_n c_0(v_n)$, we have $\Delta \in \clo(\kozero)$ if and only if for all $n \in \nn$ there exists $m>n$ with $\Delta\colon c_0(v_n) \to c_0(v_m)$ is continuous if and only if the operator $D^{nm}\colon c_0 \to c_0$ defined by $D^{nm}:=\phi_m \circ \Delta \circ \phi^{-1}_n$ is continuous, where $\phi_m=\operatorname{diag}(v_m(i))$ and $\phi^{-1}_n=\operatorname{diag}(\frac{1}{v_n(i)})$. Hence, $D^{nm}$ has a lower triangular matrix whose entries are given by
	\[
	d^{nm}_{ij}=(-1)^{j-1}\frac{v_m(i)}{v_n(j)}\binom{i-1}{j-1}, \quad 1\leq j<i
	\]
	and $d^{nm}_{ij}=0$ for $j\geq i$. It follows by \cite[Theorem 4.51-C]{Tay58} that $\Delta \in \clo(\kozero)$ if and only if for each $n \in \nn$ we find $m>n$ so that both \eqref{delta limit 0} and \eqref{delta summable} hold:
	\begin{equation}\label{delta limit 0}
		\limii |d^{nm}_{ij}|=\limii \frac{v_m(i)}{v_n(j)} \binom{i-1}{j-1}=0, \quad \forall j \in \nn,
	\end{equation}
	\begin{equation}\label{delta summable}
		\supi \sumj |d^{nm}_{ij}| = \supi \sumji \frac{v_m(i)}{v_n(j)} \binom{i-1}{j-1} <\infty
	\end{equation}
Observe that
\begin{align}\label{binomial estimation}
	\binom{i-1}{j-1} &=\frac{(i-1)!}{(j-1)!(i-j)!}=\frac{(i-1)\cdots(i-j+1)}{(j-1)!} \nonumber \\
	& = \frac{i^{j-1}}{(j-1)!}\left(1-\frac{1}{i}\right) \cdots \left(1-\frac{j-1}{i}\right).
\end{align}

(1) $\Rightarrow$ (2) Let us assume that there exists $n_0 \in \nn$ as in condition \eqref{ultranuclear}. Then, $\supi i^i v_n(i)<\infty$, for every $n \geq n_0$. In particular, $\limii i^\alpha v_n(i)=0$, for a given real number $\alpha>1$. First using (G$\infty$-1) then \eqref{binomial estimation} and then given $n \geq n_0$, taking $m>n$ and $C>0$ as in (G$\infty$-2) yield
	\[
	\frac{v_m(i)}{v_n(j)}\binom{i-1}{j-1} \leq \frac{v_m(i)}{v_n(i)}\binom{i-1}{j-1} \leq Cv_n(i) \frac{i^{j-1}}{(j-1)!} \left(1-\frac{1}{i}\right) \cdots \left(1-\frac{j-1}{i}\right)\igoes 0,
	\]
	for all $j \in \nn$. This shows that \eqref{delta limit 0} is satisfied. To prove that \eqref{delta summable} also holds, we first use (G$\infty$-1), then given $n \geq n_0$ we choose $m>n$ and $\tilde{C}>0$ as in (G$\infty$-2), and then apply \eqref{binomial estimation} to get
\begin{align*}
\supi\sumji \frac{v_m(i)}{v_n(j)}\binom{i-1}{j-1} & \leq \supi \frac{v_m(i)}{v_n(i)}\sumji \binom{i-1}{j-1} \\
&  \leq \tilde{C}\supi v_n(i) \sumji \left[\frac{i^{j-1}}{(j-1)!}\left(1-\frac{1}{i}\right) \cdots \left(1-\frac{j-1}{i}\right)\right]\\
& \leq \tilde{C}\supi i^i v_n(i) \left(1-\frac{1}{i}\right)  <\infty,
\end{align*}
by (1). Therefore, $\Delta \in \clo(\kozero)$.

 (2) $\Rightarrow$ (1) Suppose $\Delta \in \clo(\kozero)$. First we apply \eqref{delta summable}, and then (G$\infty$-1) along with \eqref{binomial estimation} to proceed
 \begin{align*}
 S&\geq \supi \sumji \frac{v_m(i)}{v_n(j)}\binom{i-1}{j-1} \geq \supi \frac{v_m(i)}{v_n(1)} \sumji \left[\frac{i^{j-1}}{(j-1)!}\left(1-\frac{1}{i}\right) \cdots \left(1-\frac{j-1}{i}\right)\right] \\
 &=\supi \frac{v_m(i)}{v_n(1)}i^i \sumji \left[\frac{1}{(j-1)!i^{i-j+1}}\left(1-\frac{1}{i}\right) \cdots \left(1-\frac{j-1}{i}\right)\right],
  \end{align*}
  for a constant $S>0$. Since for any $j$,
 \[
 \sumji \left[\frac{1}{(j-1)!i^{i-j+1}}\left(1-\frac{1}{i}\right) \cdots \left(1-\frac{j-1}{i}\right)\right] \igoes 0,
 \]
 and $S<\infty$, one has $\supi i^i v_m(i) <\infty$.
\end{proof}

\begin{remark}\label{stronger than nuclearity}
	\normalfont Obviously, condition \eqref{ultranuclear} implies nuclearity. However, the converse is not true, in general. Indeed, let $a_n(i):=\exp(in)$, for $i,n \in \nn$. Then, it is straightforward to show that $A=(a_n)_n$ is a $\ginf$-matrix. Moreover, since
\[
2\log(i) < in \quad \Rightarrow \quad i^2<e^{in} \quad \Rightarrow \quad e^{-in} <\frac{1}{i^2},
\]
for all $i,n \in \nn$, if we choose $m>n$ and $C>0$ as in (G$\infty$-2) we obtain
\[
\sumi \frac{v_m(i)}{v_n(i)} \leq C\sumi \frac{v_n(i)^2}{v_n(i)} = C\sumi v_n(i) \leq C\sumi \frac{1}{i^2} < \infty.
\]
Hence $\kozero$ is nuclear. On the other hand, we directly observe that for every $n \in \nn$, $\supi \frac{i^i}{e^{in}} =\infty$, which means the failure of condition \eqref{ultranuclear}.
\end{remark}

\section{The spectrum of $\ces$ in the non-nuclear case}

In this section we give a description of the spectra $\ptsp$ and $\spec$ when $\kozero$ is not nuclear (equivalently, $\kotinf$ is not nuclear). The following proposition is immediate from previous section.

\begin{proposition}\label{nonnuclear consequence}
	Let $\kotinf$ be a $\ginf$-space which is Schwartz. Then, the following statements are equivalent:
	\begin{enumerate}[label=\normalfont(\arabic*)]
		\item $\kozero$ is not nuclear.
		\item $\ptsp=\{1\}$.
		\item $0 \in \spec$.
	\end{enumerate}
\end{proposition}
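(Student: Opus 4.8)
The plan is to reduce the whole statement to the nuclearity dichotomy already assembled in Section~3, so that essentially no new estimate is required. The backbone is the observation that, concatenating Proposition~\ref{nuclear equivalent invertible} with Proposition~\ref{characterization nuclearity}, one has ``$0 \notin \spec$ iff $A$ satisfies \eqref{nuclearity criterion} iff $\kozero$ is nuclear''. The contrapositive of the outer equivalence is exactly (1) $\Leftrightarrow$ (3), so that part is immediate. It then remains to splice in the point spectrum through Theorem~\ref{main equivalences 2}, by proving (1) $\Rightarrow$ (2) and (2) $\Rightarrow$ (3).

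Two facts are available essentially for free. First, $\ptsp \subseteq \sigma_{pt}(\ces;\cnn) = \Sigma$, since any eigenvector lying in $\kozero \subseteq \cnn$ is a fortiori an eigenvector on $\cnn$. Second, $1 \in \ptsp$ holds unconditionally in the present Schwartz setting: the eigenvalue $1$ of $\ces$ on $\cnn$ has the constant sequence $\mathbf{1}=(1,1,\dots)$ among its eigenvectors, and $\mathbf 1 \in \kozero$ exactly when $v_n(i) \igoes 0$ for some $n$, a condition guaranteed by \eqref{tends to infinity} of Proposition~\ref{kotinf always schwartz}. Consequently the whole force of (2) lies in showing that, in the non-nuclear case, no point $\tfrac1s$ with $s \geq 2$ belongs to $\ptsp$.

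For (1) $\Rightarrow$ (2), assume $\kozero$ is not nuclear and suppose, for contradiction, that $\tfrac1s \in \ptsp$ for some $s \geq 2$. Writing $\tfrac1s = \tfrac{1}{(s-1)+1}$ with $s-1 \in \nn$ and invoking Lemma~\ref{1/s in ptsp} yields an $n$ with $i^{s-1}v_n(i) \igoes 0$; since $s-1 \geq 1$ we have $iv_n(i) \leq i^{s-1}v_n(i)$ for every $i \in \nn$, so $iv_n(i) \igoes 0$, and Lemma~\ref{1/s in ptsp} (now with exponent $1$) gives $\tfrac12 \in \ptsp$. But then Theorem~\ref{main equivalences 2} forces $0 \notin \spec$, i.e. $\kozero$ nuclear, a contradiction. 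Hence $\ptsp=\{1\}$. For (2) $\Rightarrow$ (3), if $\ptsp=\{1\}$ then $\tfrac12 \notin \ptsp$; as (1) and (2) of Theorem~\ref{main equivalences 2} are equivalent, this delivers $0 \in \spec$, closing the loop.

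The single delicate point is the monotonicity step converting ``$\tfrac1s \in \ptsp$ for some $s \geq 2$'' into ``$\tfrac12 \in \ptsp$'': one must pass through the vanishing conditions of Lemma~\ref{1/s in ptsp} and exploit $i^{s-1} \geq i$ for $s \geq 2$ and $i \in \nn$. Everything else is bookkeeping with previously established equivalences; in effect the proposition is just Theorem~\ref{main equivalences 2} read in its non-nuclear regime, where the full ladder of eigenvalues $\Sigma$ collapses to its top rung $\{1\}$.
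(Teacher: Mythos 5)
Your argument is correct and is precisely the intended derivation: the paper dismisses this proposition as ``immediate from previous section,'' and your proof fills that in using exactly Proposition~\ref{nuclear equivalent invertible}, Proposition~\ref{characterization nuclearity}, Lemma~\ref{1/s in ptsp}, and Theorem~\ref{main equivalences 2}. Your care in restricting to $s\geq 2$ (since $1=\tfrac11\in\ptsp$ holds unconditionally via $\mathbf{1}\in\kozero$, so condition (3) of Theorem~\ref{main equivalences 2} must be read with $s\geq 2$) is exactly the right way to navigate the one subtlety in that reduction.
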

Since $\ces \in \clo(\kozero)$, its dual $\ces'$ is defined and continuous on $\kozero'$ and is given by the formula
	\begin{equation}
		\ces'y:=\left(\sum_{j=i}^\infty \frac{y_j}{j}\right)_{i \in \nn}, \quad y=(y_i) \in \kozero';
	\end{equation}
see \cite[pp.774]{ABRwc0}. The following lemma is well-known. For a proof, see e.g. \cite[Lemma 16]{Kiz18}.
\begin{lemma}\label{spec_inclusion}
	Let $E$ be a Fréchet space, and let $T\colon E \to E$ be a continuous linear operator with the dual $T'\colon E' \to E'$. Then 
	\[\sigma_{pt}(T';E') \subset \sigma(T;E).\]
\end{lemma}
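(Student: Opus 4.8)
The plan is to prove the contrapositive inclusion at the level of resolvent sets: I would show that if $\lambda$ belongs to the resolvent set $\rho(T;E)$, then $\lambda I - T'$ is injective on $E'$, so that $\lambda \notin \sigma_{pt}(T';E')$. Taking complements then gives $\sigma_{pt}(T';E') \subseteq \cn \setminus \rho(T;E) = \sigma(T;E)$, which is exactly the assertion.

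To carry this out, I would fix $\lambda \in \rho(T;E)$ and write $S := \lambda I - T \in \clo(E)$. By the definition of the resolvent set, $S$ is a bijection of $E$ onto itself whose inverse $R := (\lambda I - T)^{-1}$ again lies in $\clo(E)$, so that $RS = SR = I_E$. The first step is to identify the dual operator: since $(\cdot)'$ is linear and sends $I_E$ to $I_{E'}$, one has $(\lambda I - T)' = \lambda I_{E'} - T'$, which is precisely the operator whose injectivity must be established.

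The second step invokes the functoriality of the adjoint for continuous linear maps between locally convex spaces, namely that dualization reverses composition and preserves identities. Applying this to $S R = I_E$ yields $(SR)' = R' S' = I_{E'}$, so that $R'$ is a left inverse of $S' = \lambda I_{E'} - T'$. A left-invertible operator is injective, hence $\ker(\lambda I_{E'} - T') = \{0\}$ and $\lambda \notin \sigma_{pt}(T';E')$, as required.

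I do not expect a genuine obstacle here, as the statement is purely formal and rests only on elementary properties of the adjoint. The only place asking for care is the bookkeeping of the order reversal $(SR)' = R'S'$ together with the identification $(\lambda I - T)' = \lambda I_{E'} - T'$; once these are settled, the injectivity follows in a single line. It is worth noting that one never needs the stronger topological fact that $S'$ is a topological isomorphism of $E'$ — algebraic left-invertibility already forces injectivity, which is all that the point spectrum requires.
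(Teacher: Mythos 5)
Your argument is correct: dualizing $SR=RS=I_E$ gives $\lambda I_{E'}-T'=(\lambda I-T)'$ a (left) inverse $R'$, hence injectivity, and this is exactly the standard duality argument that the paper itself only cites (it refers to \cite[Lemma 16]{Kiz18} rather than proving the lemma). The cited proof is the same computation phrased at the level of an eigenvector $u$ of $T'$ annihilating the range of $\lambda I-T$, so there is no substantive difference in approach.
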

For each $r>0$ we use the notation $D(r):=\{\lambda \in \cn: |\lambda-\frac{1}{2r}|<\frac{1}{2r}\}$. Let $\alpha:=a(\lambda)$. Then, $|\lambda-\frac{1}{2r}|=\frac{1}{2r}$ if and only if $\alpha=r$.
\begin{proposition}\label{spectrum inclusion general case}
	Let $A$ be a Köthe matrix satisfying (G$\infty$-1). Then, 
	\[\Sigma \subseteq \spec \subseteq \overline{D(1)}.\]
\end{proposition}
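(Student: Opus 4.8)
The plan is to establish the two inclusions $\Sigma\subseteq\spec$ and $\spec\subseteq\overline{D(1)}$ separately, the first using the transpose operator and the second by recycling the nuclearity-free part of Proposition~\ref{spectrum nuclear case}. For the lower bound I would work with $\ces'$, which acts on $\kozero'=\kotinf$ by $\ces' y=\left(\sum_{j=i}^\infty y_j/j\right)_i$. For a fixed $m\in\nn$ I would solve $\ces' y=\frac1m y$ coordinatewise: subtracting the $i$-th from the $(i+1)$-st equation gives $y_i/i=\frac1m(y_i-y_{i+1})$, i.e. the recursion $y_{i+1}=\left(1-\frac{m}{i}\right)y_i$. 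The decisive observation is that the factor $1-\frac{m}{i}$ vanishes at $i=m$, so the solution with $y_1=1$ has $y_i=0$ for $i>m$; thus the eigenvector $y^{(m)}$ is \emph{finitely supported} on $\{1,\dots,m\}$ and therefore lies in $\kotinf$ for \emph{every} Köthe matrix $A$, with no summability obstruction whatsoever. I would then confirm by downward induction (the tail equations reading $0=0$) that $\ces' y^{(m)}=\frac1m y^{(m)}$ genuinely holds.

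Given this eigenvector, I would pass to the spectrum of $\ces$ itself by duality. The identity $\langle \ces x,y^{(m)}\rangle=\langle x,\ces' y^{(m)}\rangle=\frac1m\langle x,y^{(m)}\rangle$ shows that $y^{(m)}$ annihilates the range of $\frac1m I-\ces$ on $\kozero$. Since $y^{(m)}\neq 0$, that range is not dense, so $\frac1m I-\ces$ is not surjective and hence cannot be invertible in $\clo(\kozero)$; therefore $\frac1m\in\spec$. As $m$ is arbitrary, this yields $\Sigma\subseteq\spec$. (This is the general principle behind Lemma~\ref{spec_inclusion}, read in the form ``an eigenvalue of the transpose lies in the spectrum of the operator''.)

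For the upper bound I would first record that $\overline{D(1)}=\{\lambda\in\cn\setminus\{0\}:a(\lambda)\geq 1\}\cup\{0\}$, so it suffices to show that every $\lambda$ with $a(\lambda)<1$ (in particular $\lambda\neq0$) lies in the resolvent set. This is exactly case (i) of the proof of Proposition~\ref{spectrum nuclear case}, and that argument invokes only (G$\infty$-1): picking $\varepsilon>0$ with $a(\lambda)<1-\varepsilon$ and, by continuity of $a$, a disk on which $a(\mu)<1-\varepsilon$, one applies \cite[Lemma 2.8]{ABR18-2} to obtain $q_n((\ces-\mu I)^{-1}x)\leq \frac{M_{n,\lambda}}{\varepsilon}q_n(x)$ for all $x\in c_0(v_n)$ and $\mu\in\clamball$. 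Hence $(\ces-\mu I)^{-1}$ maps each step $c_0(v_n)$ into itself continuously, so Lemma~\ref{inductive limit continuity}(1) places it in $\clo(\kozero)$; in particular $\lambda\notin\spec$. Combining the two parts gives $\Sigma\subseteq\spec\subseteq\overline{D(1)}$.

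The main obstacle is conceptual and lives in the first inclusion. The eigenvectors of $\ces$ \emph{on $\cnn$} grow polynomially and generically fail to lie in $\kozero$ (consistent with $\ptsp=\{1\}$ in the non-nuclear case), so $\frac1m$ need not be an eigenvalue of $\ces$ on $\kozero$. The point is that the \emph{transpose} carries the finitely supported eigenvectors, which always belong to the predual $\kotinf$; recognizing this and converting it, via the annihilator/non-surjectivity argument, into membership in $\spec$ is where the substance lies. By contrast, the upper inclusion is essentially bookkeeping, as it merely reuses the half of Proposition~\ref{spectrum nuclear case} that never depended on nuclearity.
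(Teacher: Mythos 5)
Your proposal is correct and follows essentially the same route as the paper: the lower inclusion uses the same finitely supported eigenvector of $\ces'$ (your recursion $y_{i+1}=(1-\tfrac{m}{i})y_i$ reproduces the paper's explicit product formula $u^{(s)}_i=\prod_{j=1}^{i-1}(1-\tfrac{1}{\lambda j})$) together with the duality principle of Lemma~\ref{spec_inclusion}, which you simply prove inline via the annihilator/non-dense-range argument. For the upper inclusion the paper cites \cite[Lemma 5.5]{ABR18-2} to pass from the step spectra $\sigma(\ces_n;c_0(v_n))\subseteq\overline{D(1)}$ to $\spec\subseteq\overline{D(1)}$, whereas you assemble the resolvent on the inductive limit by hand from the same key estimate \cite[Lemma 2.8]{ABR18-2}; this is only a difference in bookkeeping, not in substance.
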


\begin{proof}
	Let $\lambda \in \Sigma$, that is, there exists $s \in \nn$ such that $\lambda=\frac{1}{s}$. Define $u^{(s)}$ by 
\[
u^{(s)}_i:=\prod_{j=1}^{i-1}\left(1-\frac{1}{\lambda j}\right),
\]
for $1<i\leq s$ (with $u^{(s)}_1:=1$) and $u^{(s)}_i:=0$ for $i>s$. It is straightforward to show that $u^{(s)} \in \kozero'$ (since $u^{(s)}$ belongs to the space of finitely supported sequences $c_{00}$) and $\ces'u^{(s)}=\frac{1}{s}u^{(s)}$, that is, $\lambda \in \sigma_{pt}(\ces',\kozero')$. By Lemma~\ref{spec_inclusion}, $\lambda \in \spec$. This shows $\Sigma \subseteq \spec$. By \cite[Lemma 2.8]{ABR18-2} we see that $\sigma(\ces_n;c_0(v_n)) \subseteq \overline{D(1)}$ for all $n \in \nn$, for which $\ces_n\colon c_0(v_n) \to c_0(v_n)$ is the restriction of $\ces \in \clo(\cnn)$. Hence $\bigcap_{s \in \nn}\left(\bigcup_{j=s}^\infty \sigma(\ces_j;c_0(v_j))\right)\subseteq \overline{D(1)}$,
and so $\spec \subseteq \overline{D(1)}$ by \cite[Lemma 5.5]{ABR18-2}.
\end{proof}

\begin{proposition}\label{spectrum inclusion non-nuclear case}
		Let $\kotinf$ be a non-nuclear, Schwartz $\ginf$-space. Then
	\[
		\{0,1\} \cup D(1) \subseteq \spec \subseteq \overline{D(1)}.
	\]
\end{proposition}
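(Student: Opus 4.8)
The inclusion $\spec \subseteq \overline{D(1)}$ is immediate from Proposition~\ref{spectrum inclusion general case}, since a $\ginf$-space satisfies (G$\infty$-1); likewise $1=\tfrac11\in\Sigma\subseteq\spec$ by the same proposition, and $0\in\spec$ by Proposition~\ref{nonnuclear consequence} because $\kozero$ is not nuclear. Thus the only real content is $D(1)\subseteq\spec$, and since $D(1)\cap\Sigma=\{\tfrac1s:s\geq2\}$ is already contained in $\spec$, it remains to treat $\lambda\in D(1)\setminus\Sigma$, i.e.\ $a(\lambda)>1$ with $\lambda\notin\Sigma_0$. Here one cannot simply imitate Proposition~\ref{spectrum inclusion general case} via eigenvectors of $\ces'$: the unique candidate eigenvector decays like $i^{-a(\lambda)}$, and in a non-nuclear space the weights $a_n$ can grow fast enough (along sparse blocks) that it fails to lie in $\kotinf$. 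Instead I plan to show directly that the algebraic inverse $(\ces-\lambda I)^{-1}\in\clo(\cnn)$ (which exists by \cite{Rea85}) is \emph{not} continuous on $\kozero$; by Lemma~\ref{inductive limit continuity}(1) this forces $\lambda\in\spec$.

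The heart of the matter is a growth estimate that combines non-nuclearity with axiom (G$\infty$-2). I claim that for every $n\in\nn$ one has $\liminf_{i}\frac{\log a_n(i)}{\log i}=0$. Writing $b_n:=\log a_n$, non-nuclearity gives $\sum_{i}v_n(i)=\infty$ for all $n$ by Proposition~\ref{main equivalences 1}; as $b_n$ is non-decreasing, $\sum_i e^{-b_n(i)}=\infty$ forces $\liminf_i\frac{b_n(i)}{\log i}\leq1$, since otherwise $e^{-b_n(i)}$ would eventually be dominated by a convergent $p$-series. On the other hand (G$\infty$-2) provides, for each $n$, an index $m>n$ and $M>0$ with $b_m(i)\geq 2b_n(i)-\log M$, whence $\liminf_i\frac{b_m(i)}{\log i}\geq 2\liminf_i\frac{b_n(i)}{\log i}$. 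Iterating this doubling along the chain $n_0:=n$, $n_{j+1}:=m(n_j)$, and using the uniform bound $\liminf_i\frac{b_{n_j}(i)}{\log i}\leq1$ at every stage, yields $\liminf_i\frac{b_n(i)}{\log i}\leq 2^{-j}$ for all $j$, hence the liminf is $0$. This merging of the summability consequence of non-nuclearity with the quadratic condition (G$\infty$-2) is the main obstacle and the genuinely new ingredient compared with \cite{ABR18-2}.

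The estimate is then applied as follows. Fix $\lambda\in D(1)\setminus\Sigma$ and recall the decomposition $(\ces-\lambda I)^{-1}=D_\lambda-\frac{1}{\lambda^2}E_\lambda$ from the proof of Proposition~\ref{spectrum nuclear case}. The diagonal part $D_\lambda$ is bounded on every $c_0(v_k)$ because $|d_{ii}|\leq\operatorname{dist}(\lambda,\Sigma_0)^{-1}$, so by Lemma~\ref{inductive limit continuity}(1) continuity of $(\ces-\lambda I)^{-1}$ on $\kozero$ would require some $m$ with $E_\lambda\colon c_0(v_1)\to c_0(v_m)$ bounded; after conjugating by the isometries of \eqref{c0 isom weighted} this means $\supi\sumjim\frac{v_m(i)}{v_1(j)}|e_{ij}(\lambda)|<\infty$. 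Using the two-sided asymptotics $|e_{ij}(\lambda)|\asymp \frac{i^{a(\lambda)-1}}{j^{a(\lambda)}}$ (the lower estimate being the companion of \cite[Lemma 2.7]{ABR18-2}), the single term $j=2$ already bounds the $i$-th row sum below by $c\,a_1(2)\,v_m(i)\,i^{a(\lambda)-1}$. By the claim above choose $i_k\to\infty$ with $v_m(i_k)\geq i_k^{-\varepsilon_k}$ and $\varepsilon_k\to0$; then this row sum is at least $c\,a_1(2)\,i_k^{a(\lambda)-1-\varepsilon_k}\igoes\infty$, because $a(\lambda)-1>0$. Hence no $m$ works, $(\ces-\lambda I)^{-1}\notin\clo(\kozero)$, and $\lambda\in\spec$. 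Combining this with the cases $\lambda\in\{0,1\}\cup(D(1)\cap\Sigma)$ yields $\{0,1\}\cup D(1)\subseteq\spec\subseteq\overline{D(1)}$, as required.
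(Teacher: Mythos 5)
Your proof is correct, and its skeleton coincides with the paper's: both treat $\{0,1\}$ and $\Sigma$ via Proposition~\ref{nonnuclear consequence} and Proposition~\ref{spectrum inclusion general case}, and both handle $\lambda\in D(1)\setminus\Sigma$ by assuming $(\ces-\lambda I)^{-1}\in\clo(\kozero)$, reducing to the strictly lower triangular part $E_\lambda\colon c_0(v_n)\to c_0(v_m)$ (the diagonal part being harmless), and then combining the row-sum criterion for continuity on $c_0$ with the lower estimate $|e_{ij}(\lambda)|\geq c\,i^{a(\lambda)-1}j^{-a(\lambda)}$ to extract a bound of the form $\supi i^{\gamma}v_m(i)<\infty$ with $\gamma>0$ depending on $a(\lambda)$. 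Where you genuinely differ is in how that bound is contradicted. The paper keeps the whole row sum, estimates $\sum_{j<i}j^{-\beta}$ by an integral, and then appeals to Proposition~\ref{nuclear rapidly decreasing} to conclude that $\kozero$ would be nuclear. You instead prove a self-contained growth lemma: in the non-nuclear case $\liminf_i \log a_n(i)/\log i=0$ for every $n$, obtained by playing the divergence of $\sumi v_n(i)$ (Proposition~\ref{main equivalences 1}) against the doubling $\log a_m\geq 2\log a_n-\log M$ supplied by (G$\infty$-2), and iterating. This lemma lets a single matrix entry ($j=2$) blow up along a sparse subsequence. Your route buys two things: it needs only one term of the row sum, and it rules out $\supi i^{\gamma}v_m(i)<\infty$ for \emph{every} $\gamma>0$, so the argument is insensitive to whether the exponent $a(\lambda)-1$ produced by the estimate reaches the threshold $\alpha\geq 1$ appearing in the hypotheses of Proposition~\ref{nuclear rapidly decreasing}; the paper's route is shorter because it reuses that proposition. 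Both arguments rest on the same external inputs (the Taylor matrix criterion and the two-sided asymptotics of $e_{ij}(\lambda)$), so yours is a legitimate and slightly more robust variant rather than a fundamentally new proof.
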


\begin{proof}
	By Proposition~\ref{nonnuclear consequence} and Proposition~\ref{spectrum inclusion general case} we already know that $\Sigma_0 \subseteq \spec \subseteq \overline{D(1)}$. So it remains to establish $D(1)\setminus \Sigma \subseteq \spec$. Let $\lambda \in D(1)\setminus \Sigma$ and suppose that $\lambda \notin \spec$. Then the inverse operator $(\ces-\lambda I)^{-1}$ is continuous, equivalently, for all $n \in \nn$ there exists $m>n$ such that $(\ces-\lambda I)^{-1}\colon c_0(v_n) \to c_0(v_m)$ is continuous. Let $\beta:=a(\lambda)$ as in \eqref{function a}. Retaining the notation of Proposition~\ref{spectrum nuclear case} it follows that the linear map $\tilde{E}_{\lambda,n,m}\colon c_0 \to c_0$ is continuous, where $\tilde{E}_{\lambda,n,m}=(\tilde{e}_{ij}^{nm}(\lambda))_{i,j}$ is determined by the lower triangular matrix
	\begin{equation}
		\tilde{e}_{ij}^{nm}(\lambda)=\frac{v_m(i)}{v_n(j)}e_{ij}(\lambda), \quad \forall i \geq 2, \, 1\leq j <i,
	\end{equation}
	and $\tilde{e}_{ij}^{nm}(\lambda)=0$, if $j \geq i$. Indeed, 
	\[
	e_{ij}(\lambda)= \frac{1}{i\prod_{k=j}^i \left(1-\frac{1}{\lambda k}\right)}, \quad 1 \leq j <i,
	\]
	and $e_{ij}(\lambda)=0$, if $j \geq i$. Since $\tilde{E}_{\lambda,n,m} \in \clo(c_0)$, by the well-known criterion \cite[Theorem 4.51-C]{Tay58} we necessarily have $\supi\sumj \frac{v_m(i)}{v_n(j)}|e_{ij}(\lambda)| < \infty$. By \cite[pp.776]{ABRwc0} and (G$\infty$-1), there exists $C>0$ such that
	\[
	\supi \sumj \frac{v_m(i)}{v_n(j)}|e_{ij}(\lambda)|\geq C\supi i^{\beta-1}v_m(i) \sumjim\frac{1}{j^\beta v_n(j)} \geq C\supi i^{\beta-1}\frac{v_m(i)}{v_n(1)} \sumjim\frac{1}{j^\beta}.
	\]
	Since $\beta>1$, we have
	\[
	\sumjim \frac{1}{j^\beta} \geq \sumjim\int_j^{j+1} \frac{1}{x^\beta}\mathrm{d} x =\int_1^i \frac{1}{x^\beta}\mathrm{d} x =\frac{1}{\beta-1}\left(1-\frac{1}{i^{\beta-1}}\right).
	\]
	We have shown that for all $n \in \nn$ there is $m>n$ such that
\[
\supi i^\beta v_m(i)\left(1-\frac{1}{i^{\beta-1}}\right)\leq (\beta-1)v_n(1)\supi\sumj \frac{v_m(i)}{v_n(j)}|e_{ij}(\lambda|<\infty.
\]
Taking $n=1$, we get $\supi i^\beta v_m(i)<\infty$, for some $m \in \nn$. By Proposition~\ref{nuclear rapidly decreasing}, $\kozero$ is nuclear. This is a contradiction, and $\lambda \notin \rho(\ces;\kozero)$.
\end{proof}

\begin{remark}\label{log does not cover}
	\normalfont Let $\kotinf$ be a $\ginf$-space. Then the condition
	\[
		\exists n \in \nn:\quad \supi \log(i) v_n(i)<\infty \tag{L}\label{lceqn}
 	\]
	cannot be a nuclearity criterion for $\kozero$. Let $\alpha_i:=\log(\log(i))$, for $i\geq 3^3$ and consider the associated power series space $\pssi$ of infinite type \cite[Remark 3.5-(ii)]{ABR18-2}. By \cite[Proposition 29.6]{Vog97}, $\Lambda^1_\infty$ is nuclear if and only if $\supi \alpha_i^{-1}\log(i)<\infty$. However, we directly observe that $\supi \frac{\log(i)}{\alpha_i} = \infty$. So, $\pssi$ is not nuclear. It is easy to check $\pssi$ satisfies \eqref{condition S} so it is Schwartz. Let $a_n(i):=\exp(\log(\log(i))n)=\log(i)^n$. Then, $\pssi$ is isomorphic to the non-nuclear $\ginf$-space $\kotinf$. For a fixed $n \in \nn$, it is easy to see that $\supi\log(i)v_n(i)<\infty$. Therefore $\kozero$ satisfies condition \eqref{lceqn}.
\end{remark}

\begin{proposition}\label{spectrum non-nuclear case}
	Let $\kotinf$ be a non-nuclear, Schwartz $\ginf$-space.
	\begin{enumerate}[label=\normalfont(\arabic*)]
		\item If $\kozero$ satisfies condition \eqref{lceqn}, then
		 $\spec=\{0,1\} \cup D(1)$.
		 \item If $\kozero$ fails condition \eqref{lceqn}, then
		 $\spec=\overline{D(1)}$.
	\end{enumerate} 
\end{proposition}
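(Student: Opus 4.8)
The plan is to exploit the inclusions $\{0,1\}\cup D(1)\subseteq\spec\subseteq\overline{D(1)}$ established in Proposition~\ref{spectrum inclusion non-nuclear case}. These already settle every point except those on the boundary circle $|\lambda-\tfrac12|=\tfrac12$, equivalently (since $|\lambda-\tfrac12|=\tfrac12\Leftrightarrow a(\lambda)=1$) those with $a(\lambda)=1$; among these, $\lambda=1\in\ptsp\subseteq\spec$ and $\lambda=0\in\spec$ by Proposition~\ref{nonnuclear consequence}, so only the points $\lambda$ with $a(\lambda)=1$ and $\lambda\neq1$ remain undecided, and each of these satisfies $\lambda\notin\Sigma_0$. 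For such a $\lambda$ I would run the machinery of Proposition~\ref{spectrum nuclear case}: writing $(\ces-\lambda I)^{-1}=D_\lambda-\frac1{\lambda^2}E_\lambda$, the diagonal part $D_\lambda$ is continuous on every $c_0(v_n)$ because $\operatorname{dist}(\lambda,\Sigma_0)>0$, so by Lemma~\ref{inductive limit continuity}(1) and the isometries \eqref{c0 isom weighted} the whole question becomes: for which $\lambda$ does there exist, for each $n$, an $m>n$ with $\tilde E_{\lambda,n,m}=(\tilde e^{nm}_{ij}(\lambda))$ continuous on $c_0$? I would test this with Taylor's criterion \cite[Theorem 4.51-C]{Tay58}, i.e. the column limits $\limii\tilde e^{nm}_{ij}(\lambda)=0$ together with the uniform row-sum bound $\supi\sumjim\frac{v_m(i)}{v_n(j)}|e_{ij}(\lambda)|<\infty$. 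The whole dichotomy is forced by the fact that at $\beta:=a(\lambda)=1$ the partial sums $\sumjim j^{-\beta}$ grow like $\log i$.

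For (1), assuming \eqref{lceqn}, I would fix a boundary point $\lambda\neq1$ and show $\lambda\in\rho(\ces;\kozero)$. The entry bound \eqref{nuclear entry inequality}, valid since $a(\lambda)=1>\tfrac12$, reads $|e_{ij}(\lambda)|\le D'_\lambda\,j^{-1}$; combined with $v_n(j)\ge v_n(i)$ for $j\le i$ (from (G$\infty$-1)) it dominates the row sum by $D'_\lambda\frac{v_m(i)}{v_n(i)}(1+\log i)$. Taking $n_0$ as in \eqref{lceqn}, so that $\log i\le K a_{n_0}(i)$ for some $K>0$, I would---given $n$---set $p=\max(n,n_0)$ and pick $m>p$, $M>0$ from (G$\infty$-2) with $a_p(i)^2\le M a_m(i)$. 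Then $\frac{a_n(i)\log i}{a_m(i)}\le K\frac{a_p(i)^2}{a_m(i)}\le KM$, which bounds the row sums uniformly in $i$; the same inequality gives $v_m(i)\le KM\,v_n(1)/\log i\igoes 0$, hence the column limits. Thus $\tilde E_{\lambda,n,m}\in\clo(c_0)$ for every $n$, so $\lambda\in\rho(\ces;\kozero)$, and with the first paragraph $\spec=\{0,1\}\cup D(1)$.

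For (2), assuming \eqref{lceqn} fails, I would fix a boundary point $\lambda\neq1$ and argue by contradiction: if $\lambda\in\rho(\ces;\kozero)$ then, fixing any $n$, some $m>n$ gives $\tilde E_{\lambda,n,m}\in\clo(c_0)$ and hence finite row sums. The lower estimate of \cite[pp.776]{ABRwc0} with (G$\infty$-1), exactly as in Proposition~\ref{spectrum inclusion non-nuclear case} but now at $\beta=1$, yields
\[
\supi\sumjim\frac{v_m(i)}{v_n(j)}|e_{ij}(\lambda)|\ \ge\ \frac{C}{v_n(1)}\,\supi v_m(i)\sumjim\frac1j\ \ge\ \frac{C}{v_n(1)}\,\supi v_m(i)\log i.
\]
But $\supi v_m(i)\log i=\infty$ because \eqref{lceqn} fails at the index $m$, contradicting finiteness. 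Hence every boundary point belongs to $\spec$, and with Proposition~\ref{spectrum inclusion non-nuclear case} this gives $\spec=\overline{D(1)}$.

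I expect the main obstacle to be the borderline exponent $\beta=1$ itself: this is precisely where $\sumjim j^{-\beta}$ passes from bounded (the regime $\beta>1$ of Proposition~\ref{spectrum inclusion non-nuclear case}, where one recovers \eqref{nuclearity criterion}-type summability and hence nuclearity via Proposition~\ref{nuclear rapidly decreasing}) to logarithmically divergent, so the decisive quantity is whether the weights can absorb a single factor $\log i$---exactly condition \eqref{lceqn}. The delicate points will be checking that the two-sided estimates for $e_{ij}(\lambda)$ remain usable on the boundary and, in part (1), covering the small indices $n<n_0$; the device of replacing $n$ by $p=\max(n,n_0)$ before invoking (G$\infty$-2) is what makes the uniform row-sum bound hold for every step of the inductive limit.
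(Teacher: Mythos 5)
Your proposal is correct and follows essentially the same route as the paper: reduce to the boundary circle $a(\lambda)=1$, use the decomposition $(\ces-\lambda I)^{-1}=D_\lambda-\frac{1}{\lambda^2}E_\lambda$, and test $\tilde E_{\lambda,n,m}$ on $c_0$ via Taylor's criterion, where the two-sided bound $|e_{ij}(\lambda)|\asymp \frac{1}{j}$ turns the row sums into $\frac{v_m(i)}{v_n(i)}\log i$ and condition \eqref{lceqn} decides their boundedness. Your extra care with $p=\max(n,n_0)$ to cover the indices $n<n_0$ is a harmless refinement of the paper's argument, which works only with the index furnished by \eqref{lceqn}.
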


\begin{proof}
	Retaining the notation of the proof of Proposition~\ref{spectrum nuclear case}, for each $\lambda \in \cn\setminus\Sigma_0$, $(\ces-\lambda I)^{-1} \in \clo(\cnn)$ satisfies $(\ces-\lambda I)^{-1}=D_\lambda-\frac{1}{\lambda^2}E_\lambda$. In the previous section we have seen that the diagonal in $D_\lambda$ is a bounded sequence, independent of nuclearity condition. So $(\ces-\lambda I)^{-1}\colon \kozero \to \kozero$ is continuous if and only if $E_\lambda \in \clo(\kozero)$. Since $\kozero=\ind_n c_0(v_n)$, $E_\lambda \in \clo(\kozero)$ if and only if for each $n \in \nn$ there exists $m>n$ such that $E_\lambda:c_0(v_n)\to c_0(v_m)$ is continuous. With $\tilde{E}_{\lambda,n,m}=(\tilde{e}_{ij}^{nm})_{i,j}$, where $\tilde{e}_{ij}^{nm}=\frac{v_m(i)}{v_n(j)}e_{nm}(\lambda)$ for $i,j \in \nn$, it follows by the argument used in (ii) of the proof of Proposition~\ref{spectrum nuclear case} that $E_\lambda\colon c_0(v_n) \to c_0(v_m)$ is continuous if and only if $\tilde{E}_{\lambda,n,m}\colon c_0 \to c_0$ is continuous. By \cite[Theorem 4.51-C]{Tay58} it suffices to show that both \eqref{limit 0 condition non-nuclear} and \eqref{summable condition non-nuclear} are satisfied:
\begin{equation}\label{limit 0 condition non-nuclear}
	\limii |\tilde{e}_{ij}^{nm}(\lambda)|=\limii \frac{v_m(i)}{v_n(j)}|e_{ij}^{nm}(\lambda)|=0, \quad \forall j \in \nn
\end{equation}
\begin{equation}\label{summable condition non-nuclear}
	\supi \sumj \frac{v_m(i)}{v_n(j)}|e_{ij}(\lambda)|=\supi\sumjim\frac{v_m(i)}{v_n(j)}|e_{ij}(\lambda)|<\infty.
\end{equation}
If $\lambda \notin\{0,1\}$ belongs to the boundary $\partial D(1)$ of $D(1)$, then $\beta:=a(\lambda)=1$ and $\lambda \notin \Sigma_0$. By \cite[Lemma 3.3]{ABRwc0} there exist $\nu,\gamma>0$ such that
\begin{equation}\label{bound eijlambda}
	\frac{\nu}{j} \leq |e_{ij}(\lambda)| \leq \frac{\gamma}{j}, \quad \forall i \in \nn, \, \, 2 \leq j <i.
\end{equation}
Since $\kotinf$ is Schwartz, for any $n \in \nn$, we find $m>n$ such that,
\[
	\frac{v_m(i)}{v_n(j)}|e_{ij}(\lambda)| \leq \frac{v_m(i)}{v_n(i)	} \frac{\gamma}{j} \igoes 0.
\]
So \eqref{limit 0 condition non-nuclear} is satisfied. Let us recall the well-known inequality
\begin{equation}\label{1/j bounding}
\log(i) \leq \sumjim \frac{1}{j} \leq 1+\log(i-1).
\end{equation}
\begin{enumerate}[wide, label=(\arabic*), labelwidth=!, labelindent=0pt]
\setlength\itemsep{0.5em}
\item Let us assume that there exists $n \in \nn$  as in \eqref{lceqn}. We apply \eqref{bound eijlambda}, (G$\infty$-1), \eqref{1/j bounding} respectively, and then choose $m>n$ and $C>0$ as in (G$\infty$-2) to observe
\begin{align*}
\supi\sumjim \frac{v_m(i)}{v_n(j)}|e_{ij}(\lambda)|  &\leq \gamma\supi  v_m(i)\sumjim \frac{1}{jv_n(j)}  \leq \gamma \supi \frac{v_m(i)}{v_n(i)}\sumjim \frac{1}{j} \\
& \leq \gamma (1+\log(i))\frac{v_m(i)}{v_n(i)}\\
& \leq C \gamma \supi (1+\log(i))v_n(i) < \infty.
\end{align*}
This implies \eqref{summable condition non-nuclear} is satisfied for $\lambda \in \partial(D)\setminus\{0,1\}$, hence $\lambda \in \rho(\ces,\kozero)$. Therefore, by Proposition~\ref{spectrum inclusion non-nuclear case}, $\spec=\{0,1\}\cup D(1)$.

\item Let us apply \eqref{bound eijlambda}, (G$\infty$-1), and \eqref{1/j bounding} respectively to obtain
\begin{align*}
\supi\sumjim \frac{v_m(i)}{v_n(j)}|e_{ij}(\lambda)| &\geq \nu\supi v_m(i) \sumjim \frac{1}{jv_n(j)}  \geq \nu\supi \frac{v_m(i)}{v_n(1)}\sumjim\frac{1}{j}\\
&\geq \nu\supi\frac{\log(i)}{v_n(1)}v_m(i).
\end{align*}
However, $\supi\log(i)v_m(i)=\infty$, by assumption. This means \eqref{summable condition non-nuclear} cannot be satisfied. Hence no $\lambda\in \partial D(1)\setminus \{0,1\}$ exists which satisfies $\lambda \in \rho(\ces;k_0(V))$, that is, $\partial D(1) \setminus \{0,1\} \subseteq \spec$. By Proposition~\ref{spectrum inclusion non-nuclear case}, we are done.
\end{enumerate}
\end{proof}

\section{Mean ergodicity of $\ces$}
Let $X$ be a Fréchet space equipped with the increasing system of seminorms $(p_n(\cdot))_{n \in \nn}$. For $S \in \clo(X)$, the strong operator topology $\tau_s$ in $\clo(X)$ is determined by the seminorms $p_{n,x}(S):= p_n(Sx)$, for all $x \in X$ and for all $n \in \nn$. In this case we write $\clo_s(X)$. Let $\mathcal{B}(X)$ be the family of bounded subsets of $X$. Then, the uniform topology $\tau_b$ in $\clo(X)$ is defined by the family of seminorms $p_{n,B}(S):=\sup_{x \in B} p_n(Sx)$, for all $n \in \nn$ and for all $B \in \mathcal{B}(X)$, where $S \in \clo(X)$. In this case we write $\clo_b(X)$. A Fréchet space operator $T \in \clo(X)$ is called \textit{power bounded} if $(T^k)_{k=1}^\infty$ is an equicontinuous subset of $\clo(X)$. Given $T \in \clo(X)$, the averages $T_{[k]}:=\frac{1}{k} \sum_{j=1}^k T^j$, for  $k \in \nn$ are called the \textit{Cesàro means} of $T$. The operator $T$ is said to be \textit{mean ergodic} (resp., \textit{uniformly mean ergodic}) if $(T_{[k]})_k$ is a convergent sequence in $\clo_s(X)$ (resp., in $\clo_b(X)$). 
\begin{proposition}\label{ces is mean ergodic}
	Let $\kotinf$ be a $\ginf$-space which is Schwartz. Then, the Cesàro operator $\ces \in \clo(\kozero)$ is power bounded and uniformly mean ergodic. In particular,
	\begin{equation}\label{4.1}
		\kozero=\operatorname{ker}(I-\ces)\oplus \overline{(I-\ces)(\kozero)}
	\end{equation}
	with $\operatorname{ker}(I-\ces)=\{\boldsymbol{1}\}$ and
	\begin{equation}\label{density of cesm}
		\overline{(I-\ces)(\kozero)} = \{x \in \kozero:x_1=0\}=\overline{\operatorname{span}\{e_i\}_{i \geq 2}}.
	\end{equation}
\end{proposition}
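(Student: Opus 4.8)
The plan is to establish power boundedness first, deduce mean ergodicity from the Montel structure of $\kozero$, upgrade this to uniform mean ergodicity, and finally read off the decomposition by computing the kernel and the range closure explicitly. Since $\kotinf$ is Fréchet--Schwartz, its strong dual $\kozero$ is a Montel space (indeed a (DFS)-space) and is barrelled. By barrelledness and the Banach--Steinhaus theorem, to see that $(\ces^k)_k$ is equicontinuous it suffices to check that each orbit $\{\ces^k x:k\in\nn\}$ is bounded in $\kozero$. Fix $x$ and choose $n$ with $x\in c_0(v_n)$. The crucial observation is that every power $\ces^k=(c^{(k)}_{ij})$ is a nonnegative lower triangular matrix whose $i$-th row is supported in $\{1,\dots,i\}$ and sums to $1$ (this holds for $\ces$, as $\ces\boldsymbol{1}=\boldsymbol{1}$ with nonnegative entries, and is preserved under products). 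Because (G$\infty$-1) makes $v_n$ non-increasing, for $j\le i$ one has $v_n(j)^{-1}\le v_n(i)^{-1}$, so
\[
v_m(i)\Big|\sum_j c^{(k)}_{ij}x_j\Big| \le v_m(i)\sum_j c^{(k)}_{ij}\frac{q_n(x)}{v_n(j)} \le \frac{v_m(i)}{v_n(i)}\,q_n(x),
\]
using $\sum_j c^{(k)}_{ij}=1$. Taking the supremum over $i$ and choosing, by the Schwartz condition \eqref{condition S}, an $m>n$ with $v_m(i)/v_n(i)\igoes 0$ gives $q_m(\ces^k x)\le M q_n(x)$ with $M:=\supi v_m(i)/v_n(i)<\infty$, uniformly in $k$. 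Hence each orbit is bounded and $\ces$ is power bounded.

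For mean ergodicity, note that the Cesàro means $\ces_{[k]}$ are convex combinations of the $\ces^k$, hence equicontinuous, and each orbit $\{\ces_{[k]}x\}$ is bounded, thus relatively compact since $\kozero$ is Montel. The standard ergodic identities $(I-\ces)\ces_{[k]}=\frac1k(\ces-\ces^{k+1})\to 0$ and $x-\ces_{[k]}x=(I-\ces_{[k]})x\in (I-\ces)(\kozero)$ show that any cluster point $y$ of $(\ces_{[k]}x)$ satisfies $(I-\ces)y=0$ and $x-y\in\overline{(I-\ces)(\kozero)}$. I would pin down the cluster point by computing $\operatorname{ker}(I-\ces)$ directly: solving $\ces x=x$ coordinatewise forces $x_1=x_2=\cdots$, so $\operatorname{ker}(I-\ces)=\cn\boldsymbol{1}$, and $\boldsymbol{1}\in\kozero$ because \eqref{tends to infinity} provides an $n$ with $v_n\igoes 0$. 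Since the first coordinate of $(I-\ces)x$ always vanishes, $(I-\ces)(\kozero)\subseteq\{x:x_1=0\}$, a closed subspace, whence $\operatorname{ker}(I-\ces)\cap\overline{(I-\ces)(\kozero)}=\{0\}$. Two cluster points $y,y'$ would then satisfy $y-y'\in\operatorname{ker}(I-\ces)\cap\overline{(I-\ces)(\kozero)}=\{0\}$, so the cluster point is unique; together with relative compactness this forces $\ces_{[k]}x$ to converge. Thus $\ces$ is mean ergodic with limit projection $Px=x_1\boldsymbol{1}$.

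To upgrade to uniform mean ergodicity I would use that on the equicontinuous family $(\ces_{[k]})$ the strong operator topology coincides with the topology of uniform convergence on precompact sets, while in the Montel space $\kozero$ every bounded set is precompact; hence $\tau_s$ and $\tau_b$ agree on $(\ces_{[k]})$ and the $\tau_s$-convergence just obtained is in fact $\tau_b$-convergence. Mean ergodicity already yields the topological direct sum \eqref{4.1} with summands $\operatorname{ker}(I-\ces)$ and $\overline{(I-\ces)(\kozero)}$. It remains to identify the second summand: both $\overline{(I-\ces)(\kozero)}$ and $\{x:x_1=0\}$ are closed complements of $\cn\boldsymbol{1}$ in $\kozero$, and the former is contained in the latter, so they coincide; finally, since finitely supported sequences are dense in each step $c_0(v_n)$, one obtains $\{x:x_1=0\}=\overline{\operatorname{span}\{e_i\}_{i\ge 2}}$, establishing \eqref{density of cesm}.

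The main obstacle is twofold. The first is the uniform-in-$k$ norm estimate underlying power boundedness, which hinges on the stochastic, lower triangular nature of $\ces^k$ combined with the monotonicity from (G$\infty$-1) and the Schwartz condition. The second, more conceptual, point is the passage from mean to uniform mean ergodicity, which genuinely exploits the Montel property of $\kozero$ to identify $\tau_s$ and $\tau_b$ on the equicontinuous family of Cesàro means.
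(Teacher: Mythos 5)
Your proof is correct and follows essentially the same route as the paper: the key step in both is the contraction estimate $q_n(\ces^k x)\leq q_n(x)$ coming from the row-stochastic lower triangular form of $\ces^k$ together with the monotonicity of $v_n$ from (G$\infty$-1), followed by the standard passage from power boundedness to uniform mean ergodicity via the Montel property of $\kozero$. The only difference is that you prove in full the facts the paper delegates to citations (the per-step norm estimate, the Banach--Steinhaus reduction, and the mean ergodic theorem for power bounded operators on Montel spaces), and your use of the Schwartz condition in the power-boundedness estimate is unnecessary since taking $m=n$ already gives $q_n(\ces^k x)\leq q_n(x)$.
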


\begin{proof}
Let $\ces^k$ be the $k$-th iterate of $\ces$. By (G$\infty$-1), $v_n(i+1) \leq v_n(i)$, for all $i \in \nn$. Then, by \cite[Corollary 2.3(i)]{ABRwc0} $\ces \in \clo(c_0(v_n))$ and $q_n(\ces x)\leq q_n(x)$, for all $x \in c_0(v_n)$. By \eqref{continuity by norm}, for every $n \in \nn$ we have $q_n(\ces^k x) \leq q_n(x)$, for all $x \in c_0(v_n)$, and $k \in \nn$. \cite[Lemma 5.4]{ABR18-2} implies $\ces$ is power bounded in $\kozero$. It follows by Propositions 2.4 and 2.8 in \cite{ABR09} that $\ces$ is uniformly mean ergodic in $\kozero$ and hence \eqref{4.1} is also satisfied.
\end{proof}

\begin{proposition}
	Let $\kotinf$ be a Schwartz $\ginf$-space satisfying condition \eqref{lceqn}. Then the range $(I-\ces)^j(\kozero)$ is a closed subspace of $\kozero$ for each $j \in \nn$.
\end{proposition}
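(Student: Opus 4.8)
The plan is to reduce everything to the invertibility of a single operator on a hyperplane. By Proposition~\ref{ces is mean ergodic} we have the $\ces$-invariant decomposition $\kozero=\operatorname{span}\{\boldsymbol{1}\}\oplus Y_0$, where $\boldsymbol{1}$ spans $\ker(I-\ces)$ and $Y_0:=\{x\in\kozero:x_1=0\}=\overline{(I-\ces)(\kozero)}$ is closed, being the kernel of the continuous coordinate functional $x\mapsto x_1$. Since $(\ces x)_1=x_1$, the hyperplane $Y_0$ is invariant under $\ces$; put $\ces_0:=\ces|_{Y_0}$. The crux of the argument will be to show that $I-\ces_0\colon Y_0\to Y_0$ is a topological isomorphism. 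Granting this, the conclusion is immediate: as every vector in the range of $I-\ces$ has vanishing first coordinate, $(I-\ces)(\kozero)\subseteq Y_0$, and on $Y_0$ the operator $I-\ces$ coincides with $I-\ces_0$, whence
\[
(I-\ces)^j(\kozero)=(I-\ces_0)^{j-1}\big((I-\ces)(\kozero)\big)=(I-\ces_0)^{j-1}(Y_0)=Y_0
\]
for every $j\in\nn$, the last equality because $I-\ces_0$ is bijective on $Y_0$. Thus $(I-\ces)^j(\kozero)=Y_0$ is closed for each $j$.

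To establish the isomorphism I would exhibit the inverse explicitly. Injectivity is clear, since $\ker(I-\ces)=\operatorname{span}\{\boldsymbol{1}\}$ meets $Y_0$ only in $0$. Solving the recursion $(I-\ces)x=y$ under the normalisation $x_1=0$ produces the lower triangular candidate inverse $R=(R_{ik})$ with $R_{1k}=0$, $R_{ik}=\tfrac{1}{k-1}$ for $2\le k<i$, and $R_{ii}=\tfrac{i}{i-1}$, that is
\[
(Ry)_i=\frac{i}{i-1}\,y_i+\sum_{k=2}^{i-1}\frac{y_k}{k-1}\quad(i\ge 2),\qquad (Ry)_1=0.
\]
Because $(Ry)_1=0$, the map $R$ sends $\kozero$ into $Y_0$, and a direct substitution shows $R|_{Y_0}$ is a two-sided inverse of $I-\ces_0$. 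It then remains to prove $R\in\clo(\kozero)$; by Lemma~\ref{inductive limit continuity} it suffices to find, for each $p\in\nn$, an index $m>p$ with $R\colon c_0(v_p)\to c_0(v_m)$ continuous. Conjugating by the isometries $\phi_k$ of \eqref{c0 isom weighted} and invoking \cite[Theorem 4.51-C]{Tay58}, this reduces to the column condition $\limii \frac{v_m(i)}{v_p(k)}|R_{ik}|=0$ for each $k$ and the row bound $\supi \sum_{k}\frac{v_m(i)}{v_p(k)}|R_{ik}|<\infty$.

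The row bound is where condition \eqref{lceqn} enters, and I expect it to be the only genuine obstacle. Since $a_p=1/v_p$ is increasing one has $v_p(k)\ge v_p(i)$ for $k\le i$, so with the harmonic estimate $\sum_{j=1}^{i-2}\tfrac1j\le 1+\log i$ the $i$-th row sum is dominated by $\tfrac{v_m(i)}{v_p(i)}(3+\log i)$. Applying (G$\infty$-2) to the index $p$ furnishes $m>p$ and $M>0$ with $v_m(i)\le M\,v_p(i)^2$, so the row sum is at most $M\,v_p(i)(3+\log i)$; enlarging $p$ beyond the index supplied by \eqref{lceqn} (harmless, as $c_0(v_p)\subseteq c_0(v_{p'})$ for $p\le p'$) then makes this uniformly bounded in $i$, precisely by \eqref{lceqn}. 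The column condition follows at once from $v_m(i)\igoes 0$, completing the continuity of $R$ and hence the isomorphism claim. It is worth stressing that the inverse Cesàro matrix generates exactly a $\log i$ growth through the harmonic sum, and \eqref{lceqn} is calibrated to absorb it; when \eqref{lceqn} fails—case (2) of Proposition~\ref{spectrum non-nuclear case}—these row sums diverge, $R$ ceases to be continuous, and one should not expect the range to be closed.
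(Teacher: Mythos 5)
Your proposal is correct and follows essentially the same route as the paper: both reduce the problem to inverting $I-\ces$ on the hyperplane $\{x\in\kozero : x_1=0\}$, exhibit the same explicit lower-triangular inverse (your $R$ is the paper's matrix $B$ conjugated back through the left shift), and verify its continuity via the $c_0$-criterion, where the harmonic sum produces the $\log i$ growth that condition \eqref{lceqn} absorbs after an application of (G$\infty$-2). The only minor (and slightly cleaner) deviation is that you obtain $(I-\ces)^j(\kozero)=\{x\in\kozero:x_1=0\}$ for all $j$ directly from bijectivity of the restricted operator, whereas the paper handles $j\geq 2$ by invoking \eqref{4.1} together with an argument cited from \cite{ABR13}.
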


\begin{proof}
	First we consider the case $j=1$. Set $X(V):=\{x \in \kozero\colon x_1=0\}$. We claim that $(I-\ces)(\kozero) = (I-\ces)(X(V))$. We proceed as in the proof of the analogous result in \cite[Proposition 4.3]{ABR18-2}. By condition \eqref{tends to infinity}, there exists $n_0 \in \nn$ such that $v_n(i) \igoes 0$, for all $n \geq n_0$. Since $\kozero$ is an inductive limit of increasing Banach spaces, we can clearly assume that $v_n(i) \igoes 0$, for all $n \in \nn$. So each $v_k$ is strictly positive and decreasing with $v_n \in c_0$ and hence $\overline{(I-\ces)(c_0(v_n))}=\{x \in c_0(v_n)\colon x_1=0\}=:X_n$ and $(I-\ces)(X_k)=(I-\ces)(c_0(v_n))$ by \cite[Lemma 4.1 and Lemma 4.5]{ABRwc0}. If $x \in X(V)$, then $x \in X_n$ for some $n \in \nn$ and $(I-\ces)x \in (I-\ces)(X_n)=(I-\ces)(c_0(v_n)) \subseteq (I-\ces)(\kozero)$. That fulfills one inclusion. Now let $x \in \kozero$. Then $x \in c_0(v_n)$ for some $n \in \nn$ and hence $(I-\ces)x \in (I-\ces)(c_0(v_n))=(I-\ces)(X_n) \subseteq (I-\ces)(X(V))$. Hence $(I-\ces)(\kozero) = (I-\ces)(X(V))$. To prove that $(I-\ces)(\kozero)$ is closed in $\kozero$, it suffices to show that $(I-\ces) \in \clo(X(V))$ is surjective: if $(I-\ces)(X(V))=X(V)$, then $(I-\ces)(\kozero)=X(V)$ and hence $(I-\ces)(\kozero)$ is closed in $\kozero$. By \cite[Lemma 6.3.1]{PCB87}, $(X(V),\tau)=\ind_n X_n$, where $\tau$ is the relative topology in $X(V)$ induced from $\kozero$. If we set $\tilde{v}_n(i):=v_n(i+1)$, for all $i,n \in \nn$, then we have the topological isomorphism $X(V) \simeq E:=\ind_n c_0(\tilde{v}_n)$ by the left shift operator $S \colon X(V) \to E$ which is a surjective isomorphism at each step $S \colon X_n \to c_0(v_n)$. Let $T:=S \circ (I-\ces)|_{X(V)} \circ S^{-1} \in \clo(E)$. We now prove that $A$ is bijective with $B:=T^{-1} \in \clo(E)$. It is straightforward to see $T\colon \cnn \to \cnn$ is bijective and its inverse $B$ is given by the lower triangular matrix $(b_{ij})$ whose entries are $\frac{1}{j}$ for $1 \leq j<i$, $\frac{i+1}{i}$ for $j=i$, and $0$ for $j>i$. To show that $B$ is still the inverse of $T$ when acting on $E$, we must prove $B \in \clo(E)$, equivalently, for each $n \in \nn$ there exists $m>n$ such that $\phi_m \circ B \circ \phi_n^{-1} \in \clo(c_0)$, where the surjective isometry $\phi_k\colon c_0(\tilde{v}_k)\to c_0$ is given by $\phi_k x=(v_k(i+1)x_i)$ for every $x \in c_0(\tilde{v}_k)$. The lower triangular matrix corresponding to $\phi_m \circ B \circ \phi_n^{-1}$ is given by $d_{ij}:=(\frac{v_m(i+1)}{v_n(j+1)}b_{ij})$, for all $i,j \in \nn$. For every $j$, $\limii \frac{v_m(i+1)}{v_n(j+1)}b_{ij}=\frac{1}{jv_n(j+1)}\limii v_m(i+1) =0$. We make use of (G$\infty$-1) and \eqref{1/j bounding}, respectively, then pick $m>n$ and $C>0$ as in (G$\infty$-2), and then finally use condition \eqref{lceqn} to observe
\begin{align*}
\sumj \frac{v_m(i+1)}{v_n(j+1)}b_{ij} &= \frac{i+1}{i}\frac{v_m(i+1)}{v_n(i+1)}\sumjim \frac{1}{jv_n(j+1)} \leq \frac{v_m(i+1)}{v_n(i+1)}\left(\frac{i+1}{i}+\sumjim \frac{1}{j}\right)\\
& \leq \frac{v_m(i+1)}{v_n(i+1)}\left(\frac{i+1}{i}+(1+\log(i-1)) \right) \\
& \leq  C(3+\log(i))v_n(i+1) < \infty.
\end{align*}
Hence both the conditions (i) and (ii) of \cite[Lemma 2.1]{ABRwc0} hold. Then, $\phi_m \circ B \circ \phi_n^{-1} \in \clo(c_0)$ and hence $(I-\ces)(\kozero)$ is closed. Since $(I-\ces)(\kozero)$ is closed, \eqref{4.1} implies $\kozero=\operatorname{ker}((I-\ces)) \oplus (I-\ces)(\kozero)$. The proof of (2) $\Rightarrow$ (5) in \cite[Remark 3.6]{ABR13} implies that $(I-\ces)^j(\kozero)$ is closed in $\kozero$, for all $j \in \nn$.
\end{proof}
Let $X$ be a separable Fréchet space. Then the operator $T \in \clo(X)$ is called \textit{hypercyclic} if there exists $x \in X$ such that the orbit $\{T^kx:k \in \nn_0\}$ is dense in $X$. If, for some $z \in X$, the projective orbit $\{\lambda T^kz:\lambda \in \cn, k \in \nn_0\}$ is dense in $X$, then $T$ is called \textit{supercyclic}. Clearly, if $\ces$ is hypercyclic then $\ces$ is supercyclic.

\begin{proposition}
	Let $A$ be a Köthe matrix. Then $\ces \in \clo(\kozero)$ is not supercyclic, and hence not hypercyclic either.
\end{proposition}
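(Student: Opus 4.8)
The plan is to turn the eigenvectors of the transpose $\ces'$ that were already exhibited into an obstruction to supercyclicity, via the elementary principle that the projective orbit of a single vector cannot be dense once $\ces'$ has two independent eigenvectors for distinct eigenvalues. First I would recall from the proof of Proposition~\ref{spectrum inclusion general case} that for every $s \in \nn$ the finitely supported vector $u^{(s)} \in c_{00} \subseteq \kozero'$ satisfies $\ces' u^{(s)} = \frac{1}{s} u^{(s)}$. In particular $u^{(1)}$ (supported on $\{1\}$) and $u^{(2)}$ (supported on $\{1,2\}$ with nonzero second entry) are eigenvectors of $\ces'$ for the distinct eigenvalues $1$ and $\tfrac12$, and, having supports of different cardinality, they are linearly independent.

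Next I would argue by contradiction. Suppose some $x \in \kozero$ is supercyclic, i.e.\ the projective orbit $\{\lambda \ces^k x : \lambda \in \cn,\ k \in \nn_0\}$ is dense in $\kozero$. Consider the continuous linear map $\Phi \colon \kozero \to \cn^2$ given by $\Phi(y):=(\langle y, u^{(1)}\rangle, \langle y, u^{(2)}\rangle)$. Since $u^{(1)}$ and $u^{(2)}$ are linearly independent, $\Phi$ is surjective; being continuous and surjective, it sends the dense projective orbit onto a dense subset of $\cn^2$ (for any nonempty open $U \subseteq \cn^2$, the set $\Phi^{-1}(U)$ is open and nonempty, hence meets the dense orbit). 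Using $\langle \ces^k x, u^{(s)}\rangle = \langle x, (\ces')^k u^{(s)}\rangle = s^{-k}\langle x, u^{(s)}\rangle$ and writing $a_s := \langle x, u^{(s)}\rangle$, I obtain
\[
\Phi(\lambda \ces^k x) = \lambda\bigl(a_1,\ 2^{-k} a_2\bigr), \qquad \lambda \in \cn,\ k \in \nn_0 .
\]

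Finally I would read off the contradiction from the geometry of this set. If $a_1=0$ the first coordinate of every $\Phi(\lambda \ces^k x)$ vanishes, contradicting density, and similarly $a_2 \neq 0$; so $a_1,a_2 \neq 0$. For each fixed $k$ the family $\{\lambda(a_1, 2^{-k}a_2):\lambda \in \cn\}$ is the complex line through the origin of slope $2^{-k}a_2/a_1$, and as $k \to \infty$ these slopes tend to $0$. Thus the set of slopes $\{2^{-k} a_2/a_1 : k \in \nn_0\}$ is a sequence converging to $0$, hence not dense in $\cn \cup \{\infty\}\cong \mathbb{P}^1(\cn)$, so the union of the corresponding lines is not dense in $\cn^2$ --- contradicting the density of the image of the projective orbit. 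Therefore no supercyclic vector exists, and since hypercyclicity implies supercyclicity, $\ces$ is not hypercyclic either. I expect the only delicate point to be the reduction to $\cn^2$ (namely that the functionals $u^{(1)},u^{(2)}$ lie in $\kozero'$, which is guaranteed by their finite support, and that a continuous surjection carries dense sets to dense sets); beyond that, the argument is the standard two-eigenvalue obstruction to supercyclicity.
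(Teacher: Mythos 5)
Your argument is correct, but it is genuinely different from what the paper does: the paper's proof is a one-line citation of \cite[Proposition 4.3]{ABR17} and \cite[Proposition 4.4]{ABR18-2}, whereas you give a complete, self-contained proof of the classical obstruction that a supercyclic operator's transpose cannot have two linearly independent eigenvectors for distinct eigenvalues (Herrero; Ansari--Bourdon), instantiated with the vectors $u^{(1)}=e_1$ and $u^{(2)}=(1,-1,0,\dots)$ already exhibited in the proof of Proposition~\ref{spectrum inclusion general case}. Your computation checks out: $\ces'u^{(1)}=u^{(1)}$, $\ces'u^{(2)}=\tfrac12 u^{(2)}$, these are purely formal identities in $c_{00}$ that need neither (G$\infty$-1) nor Schwartzness, and $c_{00}\subseteq\kozero'$ so the functionals are continuous; the reduction to $\cn^2$ via the surjective continuous map $\Phi$ and the observation that $\bigcup_k\cn\cdot(a_1,2^{-k}a_2)$ is a union of lines whose slopes accumulate only at $0$, hence cannot be dense, completes the contradiction. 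Two small points you could tighten: (i) the step ``slopes not dense in $\mathbb{P}^1(\cn)$ implies the union of lines is not dense in $\cn^2$'' deserves the one-line justification that the slope map $(z,w)\mapsto[z:w]$ is continuous and open on $\cn^2\setminus\{0\}$ (or, as you indicate, a direct distance estimate using that the slopes form a bounded set with closure a proper subset of $\cn$); (ii) since the cited external propositions presumably rest on the same eigenvalue criterion, your version has the advantage of making the paper self-contained at the cost of a paragraph, and it also makes transparent that the only hypotheses really used are that $\ces\in\clo(\kozero)$ and that the coordinate functionals separate points of $\kozero$.
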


\begin{proof}
	Follows from \cite[Proposition 4.3]{ABR17} and \cite[Proposition 4.4]{ABR18-2}.
\end{proof}
\section*{Acknowledgements}
The author wishes to thank Prof. José Bonet for useful suggestions and discussions.

\bibliographystyle{plain}
\bibliography{cesaro}

\begin{thebibliography}{10}

\bibitem{ABR09}
A.~Albanese, J.~Bonet, and W.~Ricker.
\newblock Mean ergodic operators in {F}r{\'e}chet spaces.
\newblock {\em Ann. Acad. Sci. Fenn. Math.}, 34:401--436, 2009.

\bibitem{ABR13}
A.~Albanese, J.~Bonet, and W.~Ricker.
\newblock Convergence of arithmetic means of operators in {F}r{\'e}chet spaces.
\newblock {\em J. Math. Anal. Appl.}, 401:160--173, 2013.

\bibitem{ABRwc0}
A.~Albanese, J.~Bonet, and W.~Ricker.
\newblock Mean ergodicity and spectrum of the {C}es{\`a}ro operator on weighted
  {$c_0$} spaces.
\newblock {\em Positivity}, 20:761--803, 2016.

\bibitem{ABR17}
A.~Albanese, J.~Bonet, and W.~Ricker.
\newblock The {C}es{\`a}ro operator in the {F}r{\'e}chet spaces {$\ell^{p+}$}
  and {$L^{p-}$}.
\newblock {\em Glasg. Math. J.}, 59(2):273--287, 2017.

\bibitem{ABR18-2}
A.~Albanese, J.~Bonet, and W.~Ricker.
\newblock The {C}es{\`a}ro operator on duals of power series spaces of infinite
  type.
\newblock {\em J. Operator Theory}, 79(2):373--402, 2018.

\bibitem{Bie88}
K.~D. Bierstedt.
\newblock An introduction to locally convex inductive limits.
\newblock In H.~Hogbe-Nlend, editor, {\em Functional Analysis and its
  Applications}, pages 35--133, Singapore, 1988. World Scientific.

\bibitem{BMS82}
K.~D. Bierstedt, R.~Meise, and W.~H. Summers.
\newblock K{\"o}the sets and {K}{\"o}the sequence spaces.
\newblock In J.~A. Barroso, editor, {\em Functional Analysis, Holomorphy and
  Approximation Theory}, volume~71 of {\em North-Holland Math. Studies}, pages
  27--91, New York, 1982. North Holland.

\bibitem{PCB87}
P.~P{\'e}rez Carreras and J.~Bonet.
\newblock {\em Barrelled Locally Convex Spaces}, volume 131 of {\em North
  Holland Math. Studies}.
\newblock North Holland, Amsterdam, 1987.

\bibitem{Dra83}
M.~M. Dragilev.
\newblock {\em Bases in K{\"o}the spaces (Russian)}.
\newblock Rostov: Izdatel'sto Rostovskogo Universiteta, Rostov-on-Don, 1983.

\bibitem{Edw65}
R.~E. Edwards.
\newblock {\em Functional Analysis. Theory and Applications}.
\newblock Holt, Rinehart and Winston, New York Chicago San Francisco, 1965.

\bibitem{Koc85}
M.~Kocatepe.
\newblock On {D}ragilev spaces and the functor {E}xt.
\newblock {\em Arch. Math. (Basel)}, 44:438--445, 1985.

\bibitem{Koc88}
M.~Kocatepe.
\newblock Classification of {D}ragilev spaces of types-1 and 0.
\newblock {\em Math. Balkanica}, 2(2-3):266--275, 1988.

\bibitem{Koc89}
M.~Kocatepe and Z.~Nurlu.
\newblock Some special {K}{\"o}the spaces.
\newblock In T.~Terzio{\~g}lu, editor, {\em Advances in the Theory of
  Fr{\'e}chet Spaces}, volume 287 of {\em Series C: Mathematical and Physical
  Sciences}, pages 269--296, Dordrecht, 1989. Kluwer.

\bibitem{Kot69}
G.~K{\"o}the.
\newblock {\em Topological Vector Spaces I}.
\newblock Springer, Berlin Heidelberg, 1969.

\bibitem{Kiz18}
E.~Kızgut.
\newblock The {C}es{\`a}ro operator on smooth sequence spaces of finite type.
\newblock {\em Rev. R. Acad. Cienc. Exactas F{\'\i}s. Nat. Ser. A Math.
  RACSAM}, 2018.

\bibitem{Vog97}
R.~Meise and D.~Vogt.
\newblock {\em Introduction to Functional Analysis}.
\newblock Number~2 in Oxford Graduate Texts in Mathematics. Clarendon Press,
  Oxford, 1997.

\bibitem{Pie72}
A.~Pietsch.
\newblock {\em Nuclear Locally Convex Spaces}, volume~66 of {\em Ergebnisse der
  Mathematik und ihrer Grenzgebeite}.
\newblock Springer-Verlag, Berlin, 1972.

\bibitem{Ram79}
M.~S. Ramanujan and T.~Terzio{\~g}lu.
\newblock Subspaces of smooth sequence spaces.
\newblock {\em Studia Math.}, 65:299--312, 1979.

\bibitem{Rea85}
J.~B. Reade.
\newblock On the spectrum of the {C}es{\`a}ro operator.
\newblock {\em Bull. Lond. Math. Soc.}, 17:263--267, 1985.

\bibitem{Tay58}
A.~E. Taylor.
\newblock {\em Introduction to Functional Analysis}.
\newblock Wiley, New York, 1958.

\bibitem{Ter69}
T.~Terzio{\~g}lu.
\newblock Die diametral {D}imension von lokalkonvexen {R}{\"a}umen.
\newblock {\em Collect. Math.}, 20:49--99, 1969.

\bibitem{Ter73}
T.~Terzio{\~g}lu.
\newblock Smooth sequence spaces and associated nuclearity.
\newblock {\em Proc. Amer. Math. Soc.}, 37(2):497--504, 1973.

\bibitem{Ter75}
T.~Terzio{\~g}lu.
\newblock Stability of smooth sequence spaces.
\newblock {\em J. Reine Angew. Math.}, 276:184--189, 1975.

\end{thebibliography}
\end{document}